\newtheorem{thm}{Theorem}[section]
\newtheorem{cor}[thm]{Corollary}
\newtheorem{lem}[thm]{Lemma}
\newtheorem{prop}[thm]{Proposition}
\newtheorem{claim}[thm]{Claim}
\newtheorem{fact}[thm]{Fact}
\newtheorem{defn}[thm]{Definition}
\theoremstyle{definition}
\newcommand{\nn}{\mathbb{N}}
\newcommand{\ee}{\varepsilon}
\newcommand{\suc}{\mathrm{Succ}}
\newcommand{\immsuc}{\mathrm{ImmSucc}}
\newcommand{\strong}{\mathrm{Str}}
\newcommand{\ci}{\mathrm{I}}
\newcommand{\mil}{\mathrm{Mil}}
\newcommand{\udhl}{\mathrm{UDHL}}
\newcommand{\dens}{\mathrm{dens}}
\newcommand{\ave}{\mathbb{E}}
\newcommand{\bfr}{\mathbf{r}}
\newcommand{\bfs}{\mathbf{s}}
\newcommand{\bft}{\mathbf{t}}
\newcommand{\bfu}{\mathbf{u}}
\newcommand{\bfz}{\mathbf{z}}
\newcommand{\bfcb}{\mathbf{B}}
\newcommand{\bfcf}{\mathbf{F}}
\newcommand{\bfcg}{\mathbf{G}}
\newcommand{\bfci}{\mathbf{I}}
\newcommand{\bfcp}{\mathbf{P}}
\newcommand{\bfcr}{\mathbf{R}}
\newcommand{\bfcs}{\mathbf{S}}
\newcommand{\bfct}{\mathbf{T}}
\newcommand{\bfcu}{\mathbf{U}}
\newcommand{\bfcz}{\mathbf{Z}}
\newcommand{\meg}{\geqslant}
\newcommand{\mik}{\leqslant}
\newcommand{\lex}{<_{\mathrm{lex}}}
\newcommand{\con}{\smallfrown}
\begin{document}

\title{Measurable events indexed by products of trees}

\author{Pandelis Dodos, Vassilis Kanellopoulos and Konstantinos Tyros}

\address{Department of Mathematics, University of Athens, Panepistimiopolis 157 84, Athens, Greece}
\email{pdodos@math.uoa.gr}

\address{National Technical University of Athens, Faculty of Applied Sciences,
Department of Mathematics, Zografou Campus, 157 80, Athens, Greece}
\email{bkanel@math.ntua.gr}

\address{Department of Mathematics, University of Toronto, Toronto, Canada M5S 2E4}
\email{k.tyros@utoronto.ca}

\thanks{2000 \textit{Mathematics Subject Classification}: 05D10, 05C05.}
\thanks{\textit{Key words}: homogeneous trees, strong subtrees, level product, independence.}
\thanks{The first named author was supported by NSF grant DMS-0903558.}

\maketitle


\begin{abstract}
A tree $T$ is said to be homogeneous if it is uniquely rooted and there exists an integer $b\meg 2$, called the branching
number of $T$, such that every $t\in T$ has exactly $b$ immediate successors. A vector homogeneous tree $\bfct$ is a finite
sequence $(T_1,...,T_d)$ of homogeneous trees and its level product $\otimes\bfct$ is the subset of the Cartesian product
$T_1\times ...\times T_d$ consisting of all finite sequences $(t_1,...,t_d)$ of nodes having common length.

We study the behavior of measurable events in probability spaces indexed by the level product $\otimes\bfct$ of a vector
homogeneous tree $\bfct$. We show that, by refining the index set to the level product $\otimes\bfcs$ of a vector strong
subtree $\bfcs$ of $\bfct$, such families of events become highly correlated. An analogue of Lebesgue's density Theorem
is also established which can be considered as the ``probabilistic" version of the density Halpern--L\"{a}uchli Theorem.
\end{abstract}


\section{Introduction}

\numberwithin{equation}{section}

\subsection{Overview}

The present paper is devoted to the analysis of a phenomenon encountered in Ramsey Theory and concerns the structure of
measurable events in probability spaces indexed by a \textit{Ramsey space} \cite{C,To}. The phenomenon is most transparently
seen when the events are indexed by the natural numbers $\nn$, an archetypical Ramsey space.  Specifically, let $(\Omega,\Sigma,\mu)$
be a probability space and assume that we are given a family $\{A_i:i\in\nn\}$ of measurable events in $(\Omega,\Sigma,\mu)$
satisfying $\mu(A_i)\meg \ee>0$ for every $i\in\nn$. Using the classical Ramsey Theorem \cite{Ra} and elementary probabilistic
estimates, it is easy to see that for every $0<\theta<\ee$ there exists an infinite subset $L$ of $\nn$ such that for every
integer $n\meg 1$ and every subset $F$ of $L$ of cardinality $n$ we have
\begin{equation} \label{ei1}
\mu \Big(\bigcap_{i\in F} A_i\Big) \meg \theta^n.
\end{equation}
In other words, the events in the family $\{A_i:i\in L\}$ are at least as correlated as if they were independent.

A natural problem, which is of combinatorial and analytical importance, is to decide whether the aforementioned result is valid
if the events are indexed by another Ramsey space $\mathbb{S}$. Namely, given a family $\{A_s:s\in\mathbb{S}\}$ of measurable
events in a probability space $(\Omega,\Sigma,\mu)$ satisfying $\mu(A_s)\meg\ee>0$ for every $s\in\mathbb{S}$, is it possible
to find a ``substructure" $\mathbb{S}'$ of $\mathbb{S}$ such that the events in the family $\{A_s:s\in\mathbb{S}'\}$ are highly
correlated? And if yes, then can we get explicit (and, hopefully, optimal) lower bounds for their joint probability? Of course,
the notion of ``substructure" will depend on the nature of the given index set $\mathbb{S}$.

The significance of this problem will be mostly appreciated when one considers the Ramsey space $W(\mathbb{A})$ of all finite
words over a nonempty finite alphabet $\mathbb{A}$. Specifically, it was shown by H. Furstenberg and Y. Katznelson in \cite{FK}
that for every integer $k\meg 2$ and every $0<\ee\mik 1$ there exists a strictly positive constant $\theta(k,\ee)$ with the
following property. If $\mathbb{A}$ is an alphabet with $k$ letters and $\{A_w:w\in W(\mathbb{A})\}$ is a family of measurable
events in a probability space $(\Omega,\Sigma,\mu)$ satisfying $\mu(A_w)\meg\ee$ for every $w\in W(\mathbb{A})$, then there
exists a combinatorial line $\mathbb{L}$ (see \cite{HJ}) such that
\begin{equation} \label{ei2}
\mu\Big( \bigcap_{w\in\mathbb{L}} A_w\Big) \meg \theta(k,\ee).
\end{equation}
This statement is easily seen to be equivalent to the density Hales--Jewett Theorem, a fundamental result of Ramsey Theory. 
Although powerful, the arguments in \cite{FK} are qualitative in nature and give no estimate on the constant $\theta(k,\ee)$. 
Explicit lower bounds can be extracted, however, from \cite{DKT4}.

\subsection{The main results}

In \cite{DKT1} we studied the above problem when the events are indexed by a homogeneous tree; we recall that a tree $T$ is said
to be \textit{homogeneous} if it is uniquely rooted and there exists an integer $b\meg 2$, called the \textit{branching number}
of $T$, such that every $t\in T$ has exactly $b$ immediate successors. Our goal in this paper is to extend this analysis to
the higher-dimensional setting, namely when we deal with events indexed by the level product of a vector homogeneous tree. 
We recall that a \textit{vector homogeneous tree} $\bfct$ is a finite sequence $(T_1,...,T_d)$ of homogeneous trees and its
\textit{level product} $\otimes\bfct$ is the subset of the Cartesian product $T_1\times ...\times T_d$ consisting of all
finite sequences $(t_1,...,t_d)$ of nodes having common length. In particular, $\otimes\bfct(n)$ stands for the standard
Cartesian product $T_1(n)\times ...\times T_d(n)$.

In the context of trees the most natural notion of ``substructure" is that of a \textit{strong subtree}. We recall that a
subtree $S$ of a uniquely rooted tree $T$ is said to be strong provided that: (a) $S$ is uniquely rooted and balanced
(that is, all maximal chains of $S$ have the same cardinality), (b) every level of $S$ is a subset of some level of $T$,
and (c) for every non-maximal node $s\in S$ and every immediate successor $t$ of $s$ in $T$ there exists a unique immediate
successor $s'$ of $s$ in $S$ with $t\mik s'$.  The \textit{level set} of a strong subtree $S$ of a tree $T$ is the set of
levels of $T$ containing a node of $S$. The concept of a strong subtree is, of course, extended to vector trees. Specifically,
a \textit{vector strong subtree} of a vector tree $\bfct=(T_1,...,T_d)$ is just a finite sequence $\bfcs=(S_1,...,S_d)$ of
strong subtrees of $(T_1,...,T_d)$ having common level set.

\subsubsection{The continuous case}

We are ready to state the first main result of the paper.
\begin{thm} \label{it1}
For every integer $d\meg 1$, every $b_1,...,b_d\in\nn$ with $b_i\meg 2$ for all $i\in\{1,...,d\}$, every integer $n\meg 1$
and every $0<\ee\mik 1$ there exists a strictly positive constant $c(b_1,...,b_d|n,\ee)$ with the following property. 
If $\bfct=(T_1,...,T_d)$ is a vector homogeneous tree such that the branching number of $T_i$ is $b_i$ for all $i\in\{1,...,d\}$
and $\{A_\bft:\bft\in \otimes\bfct\}$ is a family of measurable events in a probability space $(\Omega,\Sigma,\mu)$ satisfying
$\mu(A_\bft)\meg\ee$ for every $\bft\in \otimes\bfct$, then there exists a vector strong subtree $\bfcs=(S_1,...,S_d)$ of
$\bfct$ of infinite height such that for every integer $n\meg 1$ and every subset $F$ of the level product $\otimes\bfcs$
of $\bfcs$ of cardinality $n$ we have
\begin{equation} \label{ei3}
\mu\Big( \bigcap_{\bft\in F} A_\bft\Big) \meg c(b_1,...,b_d|n,\ee).
\end{equation}
\end{thm}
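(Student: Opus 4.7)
My plan is to adapt the strategy used in \cite{DKT1} for a single homogeneous tree to the vector setting: combine a Milliken-type homogenization of $n$-subset colorings of $\otimes\bfct$ with a short probabilistic averaging estimate, and then assemble the infinite-height strong subtree from finite approximations by a fusion procedure. Everything reduces to a quantitative finite version of the statement.

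The finite version I would aim for asserts that for every $n\meg 1$, every $0<\ee\mik 1$ and every target height $k\meg 1$ there exists $N=N(b_1,...,b_d,n,\ee,k)$ with the property that whenever $\bfct$ is a vector homogeneous tree of height $N$ with branching numbers $b_1,...,b_d$ and $\{A_\bft:\bft\in\otimes\bfct\}$ satisfies $\mu(A_\bft)\meg\ee$, there is a vector strong subtree $\bfcs$ of $\bfct$ of height $k$ on which $\mu(\bigcap_{\bft\in F} A_\bft)\meg c(b_1,...,b_d|n,\ee)$ for every $F\subseteq\otimes\bfcs$ with $|F|\mik n$. To prove it, I would fix a small $\delta>0$, partition $[0,1]$ into bins of length $\delta$, and color each $F\subseteq\otimes\bfct$ with $|F|\mik n$ by the pair consisting of its combinatorial type (the level-preserving isomorphism class of the finite vector subtree generated by $F$ inside $\bfct$, of which there are boundedly many in terms of $b_1,...,b_d$ and $n$) together with the bin containing $\mu(\bigcap_{\bft\in F} A_\bft)$. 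A Milliken-type Ramsey theorem for vector strong subtrees, applied iteratively over types and cardinalities, then produces, for $N$ sufficiently large, a strong subtree $\bfcs$ of height $k$ on which this color depends only on the combinatorial type of $F$.

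The probabilistic heart of the argument is then to show that the common discretized value of $\mu(\bigcap_{\bft\in F} A_\bft)$ within each type on $\bfcs$ cannot be too small. Working one level $j$ of $\bfcs$ at a time, I would expand the $n$-th moment $\ave\bigl(\sum_{\bft\in\otimes\bfcs(j)} \mathbf{1}_{A_\bft}\bigr)^n$, group the terms according to combinatorial type, and separate diagonal from off-diagonal contributions. Jensen's inequality bounds this moment below by $(\ee\cdot|\otimes\bfcs(j)|)^n$, while the dominant (generic) type contributes an average of $\mu(\bigcap_{\bft\in F} A_\bft)$ over the $n$-subsets of $\otimes\bfcs(j)$ of that type; since this generic type represents almost all $n$-subsets once $j$ is large, its common value must be at least $\ee^n$ up to vanishing corrections. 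A downward induction on type, using the bounds already proved for subsets of cardinality less than $n$ to control the lower-order types, then gives the uniform constant $c(b_1,...,b_d|n,\ee)$ depending only on $b_1,...,b_d$, $n$ and $\ee$.

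Finally the infinite-height strong subtree is produced by a standard fusion: one builds a nested sequence $\bfcs^{(1)}\supseteq\bfcs^{(2)}\supseteq\cdots$ of vector strong subtrees of $\bfct$ where the first $n$ levels of $\bfcs^{(n)}$ are frozen and its tail is chosen, via the finite version applied below those levels, to satisfy the required bound for all $F$ of cardinality at most $n$; the diagonal strong subtree formed by taking the first $n$ levels of $\bfcs^{(n)}$ for each $n$ is the desired $\bfcs$. I expect the combinatorial step to be the main obstacle: one needs a Milliken-type theorem for vector strong subtrees with enough uniformity to homogenize colorings of $n$-subsets of $\otimes\bfct$ (as opposed to colorings of strong subtrees of $\bfct$ themselves), together with effective bounds on $N(b_1,...,b_d,n,\ee,k)$. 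A secondary difficulty is the bookkeeping in the downward type induction to ensure that the constant genuinely depends only on $b_1,...,b_d$, $n$ and $\ee$.
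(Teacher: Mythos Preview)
Your outline has a real gap in the probabilistic step. The Jensen/moment computation you describe,
\[
\ave\Big(\sum_{\bft\in\otimes\bfcs(j)}\mathbf{1}_{A_\bft}\Big)^{n}\meg\big(\ee\,|\!\otimes\!\bfcs(j)|\big)^{n},
\]
only controls the average of $\mu\big(\bigcap_{\bft\in F}A_\bft\big)$ over $n$-tuples $F$ lying entirely in a \emph{single} level $\otimes\bfcs(j)$. After Milliken homogenization this yields a lower bound for the types whose elements all sit at one level, but it says nothing about types that span several levels. Already for $n=2$ and a pair $\{t_1,t_2\}$ with $\ell(t_1)<\ell(t_2)$ there is no averaging over a large family of same-type pairs at a single level to which Jensen applies, and the ``downward induction on type, using the bounds already proved for subsets of cardinality less than $n$'' does not help: the only available bound for $|F|=1$ is $\mu(A_\bft)\meg\ee$, which gives no control on $\mu(A_{t_1}\cap A_{t_2})$. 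So the argument, as written, does not produce any positive lower bound for multi-level types, and this is precisely the heart of the theorem.

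The paper closes this gap not by a moment computation but by invoking the uniform density Halpern--L\"{a}uchli theorem (Theorem~\ref{uniform dhl}). The key step is Proposition~\ref{ip2}: for each $\omega$ in a set $G$ of measure $\meg(\ee/4)^{\udhl(b_1,\dots,b_d|2,\ee/2)}$, the ``section'' $D_\omega=\{\bft:\omega\in A_\bft\}$ is dense on enough levels that $\udhl$ furnishes a height-$2$ vector strong subtree $\bfcf_\omega$ with $\otimes\bfcf_\omega\subseteq D_\omega$; pigeonholing over the finitely many $\bfcf\in\strong_2(\bfct)$ gives one $\bfcf$ with $\mu\big(\bigcap_{\bft\in\otimes\bfcf}A_\bft\big)\meg\xi(b_1,\dots,b_d|\ee)>0$. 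This is then iterated (Corollary~\ref{3c1} and Lemma~\ref{3l3}) via Milliken's theorem to get control on $\mu\big(\bigcap_{\bft\in\otimes\bfcr\upharpoonright k}A_\bft\big)$ for \emph{all} $k$, and finally any $n$-subset of $\otimes\bfcs$ is embedded into some $\otimes\bfcr\upharpoonright\big(d(2n-1)-1\big)$ by Proposition~\ref{ap-p1}. In short, the missing ingredient in your plan is the density Halpern--L\"{a}uchli input; the obstacle you flagged (homogenizing colorings of $n$-subsets) is in fact the routine part, handled exactly through the embedding into strong subtrees of bounded height.
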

Theorem \ref{it1} is the higher-dimensional extension of \cite[Theorem 1]{DKT1} where the case of a single homogeneous tree
was treated. In particular, in \cite{DKT1} it was shown, among others, that
\begin{equation} \label{e1new}
c(b|n,\ee) \meg \ee^{2^{2bn}}.
\end{equation}
We point out that the proof of Theorem \ref{it1} is also effective and yields explicit lower bounds for the constants
$c(b_1,...,b_d|n,\ee)$. These estimates, however, are admittedly rather weak and it is an important problem to obtain ``civilized"
bounds for the relevant constants. A crucial ingredient of the argument is \cite[Theorem 3]{DKT2}. It is used in the proof of the
following proposition which is, possibly, of independent interest.
\begin{prop} \label{ip2}
For every integer $d\meg 1$, every $b_1,...,b_d\in\nn$ with $b_i\meg 2$ for all $i\in\{1,...,d\}$ and every  $0<\ee\mik 1$
there exist an integer $\mathrm{Cor}(b_1,...,b_d|\ee)$ and a strictly positive constant $\xi(b_1,...,b_d|\ee)$ with the 
following property. If $\bfct=(T_1,...,T_d)$ is a finite vector homogeneous tree of height at least $\mathrm{Cor}(b_1,...,b_d|\ee)$
such that the branching number of $T_i$ is $b_i$ for all $i\in\{1,...,d\}$ and $\{A_\bft:\bft\in \otimes\bfct\}$ is a family
of measurable events in a probability space $(\Omega,\Sigma,\mu)$ satisfying  $\mu(A_\bft)\meg\ee$ for every $\bft\in \otimes\bfct$,
then there exists a vector strong subtree $\bfcf$ of $\bfct$ of height $2$ such that
\begin{equation} \label{ei4}
\mu\Big( \bigcap_{\bft\in \otimes\bfcf} A_\bft\Big) \meg \xi(b_1,...,b_d|\ee).
\end{equation}
\end{prop}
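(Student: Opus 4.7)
The plan is to deduce Proposition \ref{ip2} from the density Halpern--L\"auchli theorem for vector trees, \cite[Theorem 3]{DKT2}, via a probabilistic averaging argument that interchanges the roles of the events $A_\bft$ and the sample points $\omega$. For each $\omega \in \Omega$ set
\begin{equation*}
D_\omega \;:=\; \{\bft \in \otimes\bfct : \omega \in A_\bft\}.
\end{equation*}
Fubini gives $\int_\Omega |D_\omega \cap \otimes\bfct(\ell)|/|\otimes\bfct(\ell)| \,d\mu(\omega) \meg \ee$ for every level $\ell$ of $\bfct$. Applying Markov-type inequalities at each level, followed by a further averaging over levels, produces a measurable set $B \subseteq \Omega$ with $\mu(B) \meg \ee/8$ such that, for every $\omega \in B$, the set $D_\omega$ has relative density at least $\ee/4$ on at least an $(\ee/4)$-fraction of the levels of $\bfct$.

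Fix $\omega \in B$. Passing to a vector strong subtree $\bfct'_\omega$ of $\bfct$ whose level set consists of levels on which $D_\omega$ is dense makes $D_\omega \cap \otimes\bfct'_\omega$ dense at every level of $\bfct'_\omega$. Choosing $\mathrm{Cor}(b_1,\ldots,b_d|\ee)$ large enough that $(\ee/4)\cdot\mathrm{Cor}$ exceeds the height threshold of \cite[Theorem 3]{DKT2} for density $\ee/4$, we invoke that theorem in its supersaturated form---routinely derived from the existence version by a compactness argument over finite vector trees---to conclude that, inside $\bfct'_\omega$, a positive fraction $\gamma = \gamma(\ee;b_1,\ldots,b_d)$ of all height-$2$ vector strong subtrees have level product contained in $D_\omega$, and in particular $\omega \in \bigcap_{\bft \in \otimes\bfcf} A_\bft$ for every such $\bfcf$.

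Summing the inclusion over all height-$2$ vector strong subtrees $\bfcf$ of $\bfct$ (subtrees of $\bfct'_\omega$ are also subtrees of $\bfct$) and integrating in $\omega$ via Fubini,
\begin{equation*}
\sum_{\bfcf} \mu\Big(\bigcap_{\bft \in \otimes\bfcf} A_\bft\Big)
\;=\; \int_\Omega \big|\{\bfcf : \otimes\bfcf \subseteq D_\omega\}\big|\, d\mu(\omega)
\;\meg\; \gamma\cdot\mu(B)\cdot K_{\min},
\end{equation*}
where $K_{\min}$ is the minimum, over $\omega \in B$, of the number of height-$2$ subtrees of $\bfct'_\omega$. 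A final normalization---either by dividing by the total number of height-$2$ subtrees of $\bfct$ together with a combinatorial bound on the ratio, or by invoking a supersaturated form of \cite[Theorem 3]{DKT2} whose conclusion is expressed directly in the ambient $\bfct$---extracts a particular $\bfcf^*$ with $\mu(\bigcap_{\bft \in \otimes\bfcf^*} A_\bft)$ bounded below by a positive constant $\xi(b_1,\ldots,b_d|\ee)$ depending only on $b_1,\ldots,b_d$ and $\ee$.

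The principal technical difficulty lies in this last normalization step: the naive bound involves a ratio of counts that can be exponentially small in $\mathrm{height}(\bfct)$. Controlling it requires either a stronger supersaturated version of \cite[Theorem 3]{DKT2} phrased relative to the ambient tree, or a preliminary reduction placing all $\omega \in B$ simultaneously into a common substructure of $\bfct$. Both routes are standard in principle via compactness, but the quantitative losses compound with those from the Markov and pigeonhole steps to give the acknowledged weak explicit constants, in line with the authors' remark that obtaining civilized bounds is an important open problem.
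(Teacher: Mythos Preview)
Your overall architecture --- pass to the dual sets $D_\omega=\{\bft:\omega\in A_\bft\}$, find a positive-measure set of $\omega$ for which $D_\omega$ is dense on enough levels, apply \cite[Theorem 3]{DKT2} to each such $\omega$, and then pigeonhole --- is exactly the paper's. But you leave the final pigeonhole open: you appeal to an unproved ``supersaturated form'' of the density Halpern--L\"auchli theorem, and you yourself flag the normalization ratio as the ``principal technical difficulty.'' As written this is a genuine gap: the fraction of $\strong_2(\bfct'_\omega)$ inside $\strong_2(\bfct)$ is indeed exponentially small in $h(\bfct)$, and no compactness argument will rescue a bound that must be uniform in $h(\bfct)$.

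The paper closes this gap with a move that makes the supersaturation completely unnecessary: first observe that one may assume $h(\bfct)=\mathrm{Cor}(b_1,\dots,b_d\,|\,\ee)$ exactly (pass to an initial subtree), so that $|\strong_2(\bfct)|=Q(b_1,\dots,b_d\,|\,\ee)$ is a fixed constant depending only on $b_1,\dots,b_d$ and $\ee$. Then for each $\omega$ in the good set $G$, the ordinary (non-supersaturated) Theorem \ref{uniform dhl} produces a single $\bfcf_\omega\in\strong_2(\bfct)$ with $\otimes\bfcf_\omega\subseteq D_\omega$, and an honest pigeonhole over the $Q$ possible values of $\bfcf_\omega$ yields $\bfcf$ with $\mu\big(\bigcap_{\bft\in\otimes\bfcf}A_\bft\big)\meg \mu(G)/Q$. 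The paper also replaces your iterated Markov step with a single application of Lemma \ref{correlation}, which directly produces a \emph{common} level set $L$ of size $\udhl(b_1,\dots,b_d\,|\,2,\ee/2)$ on which \emph{every} $\omega\in G$ has $D_\omega$ dense; this is cleaner than allowing $\bfct'_\omega$ to vary with $\omega$, though once the height is fixed your Markov route would also go through. In short, the missing idea is not supersaturation but the trivial reduction to fixed height.
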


\subsubsection{The discrete case}

To proceed with our discussion we need, first, to introduce some definitions. To motivate the reader, let us assume that
we are given a family $\{A_{\bft}:\bft\in\otimes\bfct\}$ of Lebesgue measurable subsets of the unit interval indexed by the
level product of a finite vector homogeneous tree $\bfct$. Using a standard approximation argument and up to negligible errors,
for every $n< h(\bfct)$ it is possible to find an integer $l_n$ such that every event in the family $\{A_{\bft}:\bft\in\otimes\bfct(n)\}$ 
belongs to the algebra generated by all dyadic intervals of length $2^{-l_n}$. This observation leads to the following definition.
\begin{defn} \label{id3}
Let $\bfct$ be a finite vector homogeneous tree and $W$ a homogeneous tree. We say that a map $D:\otimes\bfct\to \mathcal{P}(W)$
is a \emph{level selection} if there exists a subset $L(D)=\{\ell_0< ... < \ell_{h(\bfct)-1}\}$ of $\nn$, called the \emph{level set}
of $D$, such that for every $n<h(\bfct)$ and every $\bft\in\otimes \bfct(n)$ we have that $D(\bft)\subseteq W(l_n)$.

For every level selection $D:\otimes\bfct\to \mathcal{P}(W)$ the \emph{height} $h(D)$ of $D$ is defined to be the height $h(\bfct)$
of the finite vector homogeneous tree $\bfct$. The \emph{density} $\delta(D)$ of $D$ is the quantity defined by
\begin{equation} \label{ei5}
\delta(D)= \min\big\{\dens\big(D(\bft)\big):\bft\in\otimes\bfct\big\}.
\end{equation}
\end{defn}
We remark that if $W$ is a tree and $F\subseteq W(\ell)$ for some $\ell\in\nn$, then the \textit{density} of $F$ is defined by
\begin{equation} \label{ei6}
\dens(F)=\frac{|F|}{|W(\ell)|}.
\end{equation}
More generally, if $m\in\nn$ with $m\mik \ell$ and $w\in W(m)$, then the \textit{density of $F$ relative to $w$} is defined by
\begin{equation} \label{ei7}
\dens(F \ | \ w)=\frac{|F\cap \suc_W(w)|}{|W(\ell)\cap \suc_W(w)|}
\end{equation}
where $\suc_W(w)$ stands for the set of all successors of $w$ in $W$. Notice that the density of the set $F$ relative to the node
$w$ is the usual density of $F$ when restricted to the subtree $\suc_W(w)$.

It follows from the above discussion that a level selection $D:\otimes\bfct\to \mathcal{P}(2^{<\nn})$ of density $\ee$, where
$2^{<\nn}$ stands for the dyadic tree, is just the discrete version of a family $\{A_\bft:\bft\in\otimes\bfct\}$ of Lebesgue
measurable subsets of the unit interval each having measure at least $\ee$. We should point out that, beside their intrinsic
interest, level selections arose quite naturally in the proof of the density Halpern--L\"{a}uchli Theorem \cite{DKK,DKT2}. 
In fact, the density Halpern--L\"{a}uchli Theorem is essentially a statement concerning the structure of level selections.

By Proposition \ref{ip2}, for every finite vector homogeneous tree $\bfct=(T_1,...,T_d)$ and every level selection
$D:\otimes\bfct\to \mathcal{P}(W)$ of density $\ee$ and of sufficiently large height, it is possible to find a vector strong
subtree $\bfcf$ of $\bfct$ of height $2$ such that the density of the set
\begin{equation} \label{ei8}
D_{\bfcf}:=\bigcap_{\bft\in\otimes\bfcf(1)} D(\bft)
\end{equation}
is at least $c'$, where $c'$ is an absolute constant depending only on the branching numbers of the trees $T_1,...,T_d$ and
the given $\ee$. This fact is certainly useful but it gives us no information on how the set $D_{\bfcf}$ is distributed inside
the tree $W$. To clarify what we mean exactly about the distribution of the set $D_{\bfcf}$ it is convenient to introduce the
following definition.
\begin{defn} \label{id4}
Let $D:\otimes\bfct\to \mathcal{P}(W)$ be a level selection. Also let $\bfcf$ be a vector strong subtree of $\bfct$ of height $2$,
$w\in W$ and $0<\theta\mik 1$. We say that the pair $(\bfcf,w)$ is \emph{strongly $\theta$-correlated with respect to $D$} if the
following conditions are satisfied.
\begin{enumerate}
\item[(1)] We have that $w\in D(\bfr)$ where $\bfr$ is the root of $\bfcf$.
\item[(2)] For every immediate successor $s$ of $w$ in $W$ the density of the set $D_{\bfcf}$ relative to $s$ is at least $\theta$.
\end{enumerate}
\end{defn}
Roughly speaking, if a pair $(\bfcf,w)$ is strongly $\theta$-correlated with respect to $D$, then the set $D_{\bfcf}$ looks like
a randomly chosen subset of the subtree $\suc_W(w)$ of density $\theta$. Such a node $w$ is expected to exist, by Lebesgue's 
density Theorem. The main point guaranteed by Definition \ref{id4} is that the desired node $w$ will be found in an \textit{a priori}
given set; the set $D(\bfr)$ where $\bfr$ is the root of $\bfcf$.

We are now ready to state the second main result of the paper.
\begin{thm} \label{it5}
For every integer $d\meg 1$, every $b_1,...,b_d,b_{d+1}\in\nn$ with $b_i\meg 2$ for all $i\in\{1,...,d+1\}$ and every $0<\ee\mik 1$
there exist an integer $\mathrm{StrCor}(b_1,...,b_d,b_{d+1}|\ee)$ and a strictly positive constant $\theta(b_1,...,b_d,b_{d+1}|\ee)$
with the following property. If $\bfct=(T_1,...,T_d)$ is a finite vector homogeneous tree such that the branching number of $T_i$
is $b_i$ for all $i\in\{1,...,d\}$, $W$ is a homogeneous tree with branching number $b_{d+1}$ and $D:\otimes\bfct\to\mathcal{P}(W)$
is a level selection of density $\ee$ and of height at least $\mathrm{StrCor}(b_1,...,b_d,b_{d+1}|\ee)$, then there exist a vector
strong subtree $\bfcf$ of $\bfct$ of height $2$ and a node $w\in W$ such that the pair $(\bfcf,w)$ is strongly
$\theta(b_1,...,b_d,b_{d+1}|\ee)$-correlated with respect to $D$.
\end{thm}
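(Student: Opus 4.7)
The plan is to combine Proposition~\ref{ip2} with a Lebesgue-density (martingale) argument inside the tree $W$.

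\textbf{Step 1 (transfer to events on $\partial W$).} Let $\mu$ be the natural uniform probability measure on the boundary $\partial W$, so that the cylinder $[w]:=\{\omega\in\partial W: w\mik\omega\}$ over $w\in W(\ell)$ has $\mu([w])=b_{d+1}^{-\ell}$. For each $\bft\in\otimes\bfct$ set $A_\bft:=\bigcup_{w\in D(\bft)}[w]$; then $\mu(A_\bft)=\dens(D(\bft))\meg\ee$. Provided the height of $\bfct$ is at least $\mathrm{Cor}(b_1,\ldots,b_d|\ee)$, Proposition~\ref{ip2} produces a height-$2$ vector strong subtree $\bfcf$ of $\bfct$, with root $\bfr$ at some level $n_0$ and height-$1$ nodes at some level $n_1>n_0$, such that $\mu\bigl(A_\bfr\cap\bigcap_{\bft\in\otimes\bfcf(1)}A_\bft\bigr)\meg\xi_0:=\xi(b_1,\ldots,b_d|\ee)$. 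Unpacking, the set $E:=\{w'\in D_\bfcf: w'|_{\ell_{n_0}}\in D(\bfr)\}\subseteq W(\ell_{n_1})$ has density at least $\xi_0$ in $W(\ell_{n_1})$.

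\textbf{Step 2 (reduction to a single density threshold).} Set $q:=1-(1-\theta)/b_{d+1}$. For $w\in D(\bfr)$ and any descendant $s$ of $w$ one has $E\cap\suc_W(s)=D_\bfcf\cap\suc_W(s)$, hence $\dens(E\ |\ w)=\dens(D_\bfcf\ |\ w)$. Moreover, if $\dens(D_\bfcf\ |\ w)\meg q$, then from the identity $\dens(D_\bfcf\ |\ w)=b_{d+1}^{-1}\sum_{s}\dens(D_\bfcf\ |\ s)$, where $s$ ranges over the $b_{d+1}$ immediate successors of $w$, and the bound $\dens(D_\bfcf\ |\ s)\mik 1$, one obtains $\min_{s}\dens(D_\bfcf\ |\ s)\meg b_{d+1}q-(b_{d+1}-1)=\theta$. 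Thus it is enough to produce a single $w\in D(\bfr)$ with $\dens(D_\bfcf\ |\ w)\meg q$.

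\textbf{Step 3 (Lebesgue-density / martingale argument).} The function $\omega\mapsto p_\ell(\omega):=\dens(D_\bfcf\ |\ \omega(\ell))$ is a martingale in $\ell$ with $p_{\ell_{n_1}}(\omega)=\mathbf{1}_{D_\bfcf}(\omega(\ell_{n_1}))$. A stopping-time analysis at the threshold $q<1$, together with the freedom to make the gap $\ell_{n_1}-\ell_{n_0}$ large (absorbed into $\mathrm{StrCor}(b_1,\ldots,b_{d+1}|\ee)$), is designed to ensure that the good set $G:=\{w\in W(\ell_{n_0}): p_{\ell_{n_0}}(w)\meg q\}$ has density strictly greater than $1-\ee$ in $W(\ell_{n_0})$. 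Since $\dens(D(\bfr))\meg\ee$, inclusion-exclusion forces $G\cap D(\bfr)\ne\emptyset$, and any $w$ in this intersection makes $(\bfcf,w)$ strongly $\theta$-correlated with respect to $D$.

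\textbf{Main obstacle.} The third step is the crux. A direct averaging bound gives only $\dens(G)\meg (\dens(D_\bfcf)-q)/(1-q)$, which is useless (possibly negative) in the generic case $\xi_0<q$. To force $\dens(G)>1-\ee$ one must iterate Proposition~\ref{ip2} inside shrinking sub-regions of $W(\ell_{n_0})$: at each stage one passes to a further height-$2$ vector strong subtree of $\bfct$ and conditions on a subtree of $W$ where the iterated density is boosted, continuing until almost every node of the current region lies above the threshold $q$. Controlling the multiplicative decay of the $\xi$'s across these iterations (which determines $\theta(b_1,\ldots,b_{d+1}|\ee)$) and the geometric growth of the required height of $\bfct$ (which determines $\mathrm{StrCor}(b_1,\ldots,b_{d+1}|\ee)$) is the delicate quantitative heart of the argument, and naturally yields weak but strictly positive bounds, mirroring the situation in \cite{DKT1} but exacerbated by the $(d+1)$-dimensional vector structure.
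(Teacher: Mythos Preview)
Your Steps 1 and 2 are sound, and you correctly isolate the difficulty: after Proposition~\ref{ip2} you have a single $\bfcf$ with $\dens(D_\bfcf)\meg\xi_0$, and you must locate $w\in D(\bfr)\subseteq W(\ell_{n_0})$ with $\dens(D_\bfcf\mid w)\meg q>1-1/b_{d+1}$. You also correctly observe that the naive bound $\dens(G)\meg(\xi_0-q)/(1-q)$ is vacuous. The problem is that your proposed fix is not a fix. The node $w$ is constrained to lie at the \emph{fixed} level $\ell_{n_0}$, so a martingale or Lebesgue-density argument, which needs freedom to descend in $W$, is not available; enlarging the gap $\ell_{n_1}-\ell_{n_0}$ does nothing, since $D_\bfcf$ could be spread with uniform density $\xi_0\ll q$ over all of $W(\ell_{n_0})$. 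Your iteration sketch (``pass to a further height-$2$ vector strong subtree and condition on a subtree of $W$'') is too vague to close the gap: conditioning on $\suc_W(v)$ changes the densities of \emph{all} the $D(\bft)$'s, possibly below $\ee$, so Proposition~\ref{ip2} need not re-apply; and each re-application produces a \emph{new} $\bfcf'$ whose root $\bfr'$ sits at a new level of $W$, so no progress accumulates toward the original goal.

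The paper's argument is structurally different from what you outline. It is a density-increment dichotomy (Lemma~\ref{l91}): either one can pass to a node $w'\in W$ and a vector strong subtree on which the level selection becomes $(w',\cdot,\ee+r^2/2)$-dense, or the strongly correlated pair exists. Since density cannot exceed $1$, the first alternative terminates after boundedly many steps. The ``or'' branch is where the real work lies, and it is \emph{not} obtained from Proposition~\ref{ip2}. Instead the paper runs a second, inner iteration of length $K=\udhl(b_1,\ldots,b_d\mid 2,\ee/(4b_{d+1}))$: at step $m$ one has a node $\tilde w_m$, a subtree $\bfcz_m$, and an accumulated set $\Gamma_m$ of ``bad roots'' whose associated height-$2$ configurations are $\theta_m$-\emph{negligible} at $\tilde w_m$ (Definition~\ref{nd61}). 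Lemma~\ref{c67} then applies Theorem~\ref{uniform dhl} not in dimension $d$ but in dimension $\sum_i b_i^{m+1}$ (one coordinate for each node of $\otimes\bfcz_m(m+1)$), which simultaneously controls the densities at \emph{all} immediate successors of the chosen node --- this is precisely what your Step~2 reduction tries to achieve by brute force and cannot. Lemma~\ref{c72} then uses Milliken's theorem to uniformise the direction of failure. If no strongly correlated pair is found after $K$ steps, the accumulated $\Gamma_n$'s form a dense subset of the level product of a height-$K$ subtree, and a final application of Theorem~\ref{uniform dhl} produces a contradiction with the negligibility hypothesis. None of this machinery --- negligibility bookkeeping, the blow-up of dimension, the Milliken step --- appears in your proposal, and each is essential.
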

Theorem \ref{it5} is the most demanding result of the paper. Its proof follows a density increment strategy -- a powerful method
pioneered by K. F. Roth \cite{Roth} -- and is based, in an essentially way, on \cite[Theorem 3]{DKT2}. The argument is effective.
In particular, we provide explicit estimates for all numerical invariants appearing in Theorem \ref{it5}.

\subsection{Consequences} 

We proceed to discuss the relation between Theorem \ref{it5} and the infinite version of the density Halpern--L\"{a}uchli Theorem.
Let us recall, first, the statement of this result (see \cite[Theorem 2]{DKK}).
\begin{thm} \label{dhl}
For every integer $d\meg 1$ we have that $\mathrm{DHL}(d)$ holds, i.e. for every vector homogeneous tree $\bfct=(T_1,...,T_d)$ and
every subset $D$ of the level product $\otimes\bfct$ of $\bfct$ satisfying
\begin{equation} \label{e1-final section}
\limsup_{n\to\infty} \frac{|D\cap \otimes\bfct(n)|}{|\!\otimes\bfct(n)|}>0
\end{equation}
there exists a vector strong subtree $\bfcs$ of $\bfct$ of infinite height whose level product is contained in $D$.
\end{thm}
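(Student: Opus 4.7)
\textbf{The plan} is to deduce Theorem \ref{dhl} from Theorem \ref{it5} by first proving a finite version (strong subtrees of any prescribed height contained in $D$) and then lifting to the infinite version via a K\"onig-style compactness argument. Fix $D\subseteq\otimes\bfct$ with $\limsup_n|D\cap\otimes\bfct(n)|/|\!\otimes\!\bfct(n)|=:\ee>0$. First I would pass to a vector strong subtree $\bfct^0$ of $\bfct$ of infinite height on which $D$ has uniformly positive density, in the sense that for every node $\bft\in\otimes\bfct^0$ the relative density of $D$ above $\bft$ is bounded below by some $\ee'>0$ at an infinite set of levels. This is a standard Ramsey-type reduction on the finite ``density profile'' at each level of $\bfct$, and it converts the asymptotic density hypothesis into the uniform density hypothesis required by Theorem \ref{it5}.

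\textbf{Finite DHL by induction on height.} Next I would show by induction on $h$ that for every $h\meg 2$, if the height of a suitable truncation of $\bfct^0$ is chosen large enough, then there is a vector strong subtree of height $h$ whose level product is contained in $D$. For the inductive step, split the vector tree as $(T_1^0,\ldots,T_{d-1}^0,T_d^0)$, treat $T_d^0$ as the auxiliary tree $W$ of Theorem \ref{it5}, and view $D$ as a level selection $D':\otimes(T_1^0,\ldots,T_{d-1}^0)\to\mathcal{P}(T_d^0)$ of density $\meg\ee'$. Applying Theorem \ref{it5} produces a height-$2$ strong subtree $\bfcf$ of $(T_1^0,\ldots,T_{d-1}^0)$ together with a node $w\in T_d^0$ such that $(\bfcf,w)$ is strongly $\theta$-correlated. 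Condition (1) of Definition \ref{id4} gives $w\in D'(\bfr)$, where $\bfr$ is the root of $\bfcf$, while condition (2) guarantees that $D'_{\bfcf}$ has density $\meg\theta$ on every immediate-successor cone of $w$ in $T_d^0$. One then restricts to the successors of $w$ in $T_d^0$ and to the extensions of $\bfcf$ beyond its top level, and recurses to gain one more level of the strong subtree at each stage.

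\textbf{Compactness and main obstacle.} The finite version shows that the set of height-$h$ vector strong subtrees of $\bfct$ contained in $D$ is nonempty for every $h$; under extension these organize into a finitely branching tree, so K\"onig's lemma furnishes the desired infinite vector strong subtree $\bfcs$ with $\otimes\bfcs\subseteq D$. The principal difficulty will be controlling the density parameter through the recursion inside the finite version: each invocation of Theorem \ref{it5} replaces the current density $\ee$ with $\theta(b_1,\ldots,b_{d-1},b_d|\ee)<\ee$, and iterating $h-1$ times could drive the density to zero if done naively. The remedy is either to pre-select parameters at the outset so that $h-1$ nested applications of Theorem \ref{it5} preserve positive density, or, equivalently, to insert an additional pigeonhole/uniformization step after each application in order to re-establish a positive uniform density above the newly selected node before the next invocation.
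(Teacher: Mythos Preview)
Your reduction to Theorem \ref{it5} is in the right spirit, but the passage from ``finite DHL for every $h$'' to ``infinite DHL'' via K\"onig's lemma does not work as stated. The tree whose nodes are finite vector strong subtrees $\bfcs$ with $\otimes\bfcs\subseteq D$, ordered by one-level extension, is \emph{not} finitely branching: to extend a height-$k$ strong subtree by one level you must first choose a level $n$ of $\bfct$ above the current top, and there are infinitely many such $n$. Infinite, countably branching trees need not have infinite branches, so nonemptiness at every height $h$ does not yield an infinite $\bfcs$. (This is the opposite of the usual compactness direction in Ramsey theory, where one extracts the \emph{finite} statement from the infinite one.) Your finite recursion also hides a real difficulty you do not mention: after one application of Theorem \ref{it5} you must continue above \emph{all} elements of $\otimes\bfcf(1)$ and above \emph{all} immediate successors of $w$ simultaneously, so the ``vector tree'' on which the next level selection lives has $\sum_i b_i^{m+1}$ coordinates rather than $d-1$; the dimension blows up at every step, and Theorem \ref{it5} must be invoked for these growing dimensions with carefully nested parameters. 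Finally, the case $d=1$ is not covered by your splitting at all.

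The paper's route is different and avoids both problems. It does not attempt a direct height recursion or a compactness argument. Instead it first upgrades Theorem \ref{it5} to a \emph{uniform} infinite statement, Theorem \ref{cor1-final section}, via Milliken's Theorem \ref{infinite milliken}: one obtains an infinite vector strong subtree $\bfcs$ of $(T_1,\dots,T_d)$ together with a node $w_{\bfs}\in D(\bfs)$ for \emph{every} $\bfs\in\otimes\bfcs$, such that every height-$2$ strong subtree of $\bfcs$ rooted at $\bfs$ is strongly $\theta$-correlated at $w_{\bfs}$, with a single constant $\theta=\theta(b_1,\dots,b_{d+1}\,|\,\ee)$. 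This uniformity is exactly what kills the density-degradation obstacle you identified. The infinite DHL is then obtained by plugging this as a pigeonhole principle into the explicit recursive (fusion-type) construction of \cite[\S 5]{DKK}, which is an induction on the dimension $d$ rather than on the height $h$, and which builds the infinite strong subtree directly rather than appealing to compactness.
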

We point out that the case ``$d=1$" of Theorem \ref{dhl} is due to R. Bicker and B. Voigt \cite{BV} and we refer the reader to
\cite[\S 1]{DKK} for a discussion on the history of this result.

Also we need to extend Definition \ref{id3} to the infinite-dimensional setting. Specifically, let $0<\ee\mik 1$, $\bfct$ a vector
homogeneous tree and $W$ a homogeneous tree. We say that a map $D:\otimes\bfct\to \mathcal{P}(W)$ is an \textit{$\ee$-dense level selection}
if there exists an infinite subset $L(D)=\{\ell_0<\ell_1<...\}$ of $\nn$ such that for every $n\in\nn$ and every $\bft\in\otimes\bfct(n)$
we have $D(\bft)\subseteq W(\ell_n)$ and $\dens\big(D(\bft)\big)\meg\ee$. This notion was introduced in \cite{DKK} and it was
critical for the proof of Theorem \ref{dhl}. 

We have the following theorem.
\begin{thm} \label{cor1-final section}
Let $d\in\nn$ with $d\meg 1$ and $b_1,...,b_d,b_{d+1}\in\nn$ with $b_i\meg 2$ for all $i\in \{1,...,d+1\}$. Also let $0<\ee\mik 1$,
$\bfct=(T_1,...,T_d)$ a vector homogeneous tree such that the branching number of $T_i$ is $b_i$ for all $i\in \{1,...,d\}$,
$W$ a homogeneous tree with branching number $b_{d+1}$ and $D:\otimes\bfct\to \mathcal{P}(W)$ an $\ee$-dense level selection. 
Then there exist a vector strong subtree $\bfcs$ of $\bfct$ of infinite height and for every $\bfs\in\otimes\bfcs$ a node 
$w_{\bfs}\in D(\bfs)$ such that for every strong subtree $\bfcf$ of $\bfcs$ of height $2$ with $\bfcf(0)=\bfs$ the pair
$(\bfcf,w_{\bfs})$ is strongly $\theta(b_1,...,b_d,b_{d+1}|\ee)$-correlated with respect to $D$, where the constant
$\theta(b_1,...,b_d,b_{d+1}|\ee)$ is as in Theorem \ref{it5}.
\end{thm}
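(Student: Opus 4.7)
The plan is to promote Theorem \ref{it5} from the finite to the infinite setting via a fusion argument combined with a Halpern--L\"auchli / Milliken-type partition theorem for vector strong subtrees. I construct $\bfcs$ level by level while maintaining a decreasing sequence of infinite vector strong subtrees $\bfct = \bfcu^{(0)} \supseteq \bfcu^{(1)} \supseteq \cdots$ of $\bfct$. The inductive invariant at stage $n$ is that $\bfcs|_{\leq n}$ is an initial segment of $\bfcu^{(n)}$ and, for each $\bfs \in \bigcup_{k\leq n}\otimes\bfcs(k)$, a committed witness $w_\bfs \in D(\bfs)$ is strongly $\theta$-correlated with every height-$2$ vector strong subtree $\bfcf$ of $\bfcu^{(n)}$ satisfying $\bfcf(0)=\bfs$. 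Since the final $\bfcs$ lies inside each $\bfcu^{(n)}$ beyond the appropriate stage, the pair $(\bfcs, \bfs\mapsto w_\bfs)$ satisfies the conclusion of Theorem \ref{cor1-final section}.

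For the inductive step, I first take $\bfcs(n+1)$ to be the immediate-successor level of $\bfcs(n)$ inside $\bfcu^{(n)}$; by the invariant, the previously committed witnesses remain good for the height-$2$ extensions into $\bfcs(n+1)$. Next, for each $\bfs \in \otimes\bfcs(n+1)$ processed in some enumeration, I commit $w_\bfs$ and refine the ambient subtree. Working inside the cone of the current refinement above $\bfs$ (viewed as a vector homogeneous tree with root $\bfs$), I color each height-$2$ vector strong subtree $\bfcf$ with $\bfcf(0)=\bfs$ by
\[
\chi(\bfcf) := \{ w \in D(\bfs) : (\bfcf,w) \text{ is strongly $\theta$-correlated w.r.t. } D \} \subseteq D(\bfs),
\]
a coloring with at most $2^{|D(\bfs)|}$ values. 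The Halpern--L\"auchli partition theorem for vector strong subtrees (derivable from \cite[Theorem 3]{DKT2}) yields an infinite refinement on which $\chi$ is constantly equal to some $C_\bfs \subseteq D(\bfs)$. I then pick any $w_\bfs \in C_\bfs$ and update the ambient subtree accordingly.

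The main obstacle is verifying $C_\bfs \neq \emptyset$. Theorem \ref{it5} applied to a sufficiently tall finite truncation of the refinement above $\bfs$ produces a correlated pair $(\bfcf^*, w^*)$, but the root $\bfcf^*(0)$ is only guaranteed to lie \emph{somewhere} in the cone, not necessarily at $\bfs$ itself. To force the root to be $\bfs$, I plan to pigeonhole on the level of the output root (finitely many possibilities within any finite truncation) and to invoke the partition theorem a second time to stabilise this level; the stabilised level must then be $0$, since otherwise one could iterate above it to produce an infinite strictly descending sequence of refinements, each of which homogenises a coloring into a strictly smaller subset of $D(\bfs)$, contradicting finiteness. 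Once $C_\bfs \neq \emptyset$ is secured, the remaining bookkeeping---that each refinement preserves the goodness of previously committed witnesses and that the constant $\theta$ produced by Theorem \ref{it5} is level-independent---is routine.
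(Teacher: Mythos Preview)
Your overall strategy---a fusion argument that combines Theorem~\ref{it5} with Milliken's Theorem---is exactly what the paper has in mind; the paper itself only says the result ``follows from Theorem~\ref{it5} and Milliken's Theorem using fairly standard arguments'' and gives no further detail. The structure you describe (commit to $\bfcs\upharpoonright n$, keep an ambient $\bfcu^{(n)}$, and for each new $\bfs$ homogenise the colouring $\chi(\bfcf)=\{w\in D(\bfs):(\bfcf,w)\text{ is strongly }\theta\text{-correlated}\}$ on $\strong^0_2$ of the cone above $\bfs$) is the right one.

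There is, however, a genuine gap in your justification that $C_\bfs\neq\varnothing$. The argument you propose---pigeonhole on the level of $\bfcf^*(0)$, stabilise that level via the partition theorem, and then claim the stabilised level must be $0$ because otherwise iterating produces a strictly descending chain of subsets of $D(\bfs)$---does not work. When you pass to a cone above a node $\bfr$ at level $k>0$ and repeat the construction, the colouring there takes values in $\mathcal{P}\big(D(\bfr)\big)$, not in $\mathcal{P}\big(D(\bfs)\big)$; there is no descending chain inside the fixed finite set $D(\bfs)$ to appeal to, and nothing forces the stabilised level to be $0$.

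The clean repair is to insert one preliminary step before the fusion. Define a $2$-colouring of $\strong_2(\bfct)$ by declaring $\bfcf$ ``good'' if there exists $w\in D\big(\bfcf(0)\big)$ with $(\bfcf,w)$ strongly $\theta$-correlated, and ``bad'' otherwise. Apply the infinite Milliken Theorem (Theorem~\ref{infinite milliken}) to obtain $\bfcs'\in\strong_\infty(\bfct)$ with $\strong_2(\bfcs')$ monochromatic; Theorem~\ref{it5} applied to any initial subtree $\bfcs'\upharpoonright N$ with $N\geq\mathrm{StrCor}(b_1,\dots,b_{d+1}\mid\ee)$ forces the colour to be ``good''. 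Now run your fusion inside $\bfcs'$. After homogenising $\chi$ on $\strong^0_2$ of the cone above $\bfs$ to the constant value $C_\bfs$, pick any $\bfcf$ in that homogenised set: then $C_\bfs=\chi(\bfcf)\neq\varnothing$ because $\bfcf$ is ``good''. With this one-line correction the rest of your bookkeeping is, as you say, routine.
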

Theorem \ref{cor1-final section} follows from Theorem \ref{it5} and Milliken's Theorem \cite{Mi2} using fairly standard
arguments; we leave the details to the interested reader. It is a quantitative strengthening of \cite[Corollary 10]{DKK}
where a similar result was obtained but no estimate was given for the relevant constants. While we find such an improvement
interesting per se, our interest in Theorem \ref{cor1-final section} was, mainly, for utilitarian reasons. Specifically, 
Theorem  \ref{cor1-final section} can be used to derive Theorem \ref{dhl} by plugging it, as pigeonhole principle, in the
recursive construction presented in \cite[\S 5]{DKK}.

Thus we see that Theorem \ref{it5}, which is an entirely \textit{finitary} statement but of \textit{probabilistic} nature, 
can be used to derive the corresponding infinite-dimensional result. This methodology has been applied successfully to related
problems in Ramsey Theory -- see, in particular, \cite{DKT3}.

\subsection{Organization of the paper}

The paper is organized as follows. In \S 2 we set up our notation and terminology, and we recall some tools needed for
the proofs of the main results. In \S 3 we give the proofs of Theorem \ref{it1} and Proposition \ref{ip2}.

The rest of the paper is devoted to the proof of Theorem \ref{it5}. In \S 4 we prove two ``averaging" lemmas. Both are
stated in abstract form and can be read independently. In \S 5 we give a detailed outline of the argument and an
exposition of the main ideas of the proof. In \S 6 we prove the basic tools needed for the proof of Theorem \ref{it5}.
This section is rather technical, and the reader is advised to gain first some familiarity with the general strategy
of the proof before studying this section in detail. Finally, the proof of Theorem \ref{it5} is completed in \S 7. 


\section{Background material}

\numberwithin{equation}{section}

By $\nn=\{0,1,2,...\}$ we shall denote the natural numbers. For every integer $n\meg 1$ we set $[n]=\{1,...,n\}$. 
The cardinality of a set $X$ will be denoted by $|X|$ while its powerset will be denoted by $\mathcal{P}(X)$. 
If $X$ is a nonempty finite set, then by $\ave_{x\in X}$ we shall denote the average $\frac{1}{|X|} \sum_{x\in X}$. 
For every function $f:\nn\to\nn$ and every $k\in\nn$ by $f^{(k)}:\nn\to\nn$ we shall denote the $k$-th iteration of $f$
defined recursively by the rule $f^{(0)}(n)=n$ and $f^{(k+1)}(n)=f\big(f^{(k)}(n)\big)$ for every $n\in\nn$.

\subsection{Trees}

By the term \textit{tree} we mean a nonempty partially ordered set $(T,<)$ such that for every $t\in T$ the set
$\{s\in T: s<t\}$ is linearly ordered under $<$ and finite. The cardinality of this set is defined to be the \textit{length}
of $t$ in $T$ and will be denoted by $\ell_T(t)$. For every $n\in\nn$ the \textit{$n$-level} of $T$, denoted by $T(n)$,
is defined to be the set $\{t\in T: \ell_T(t)=n\}$. The \textit{height} of $T$, denoted by $h(T)$, is defined as follows. 
If there exists $k\in\nn$ with $T(k)=\varnothing$, then set $h(T)=\max\{n\in\nn: T(n)\neq\varnothing\}+1$; otherwise,
set $h(T)=\infty$.

For every node $t$ of a tree $T$ the set of \textit{successors} of $t$ in $T$ is defined by
\begin{equation} \label{e21}
\suc_T(t)=\{s\in T: t\mik s\}.
\end{equation}
Moreover, let $\immsuc_T(t)=\{s\in T: t\mik s \text{ and } \ell_T(s)=\ell_T(t)+1\}$. The set $\immsuc_T(t)$ is called the set
of \textit{immediate successors} of $t$ in $T$. A node $t\in T$ is said to be \textit{maximal} if the set $\immsuc_T(t)$ is empty.

A \textit{subtree} $S$ of a tree $(T,<)$ is a nonempty subset of $T$ viewed as a tree equipped with the induced partial ordering. 
For every $n\in\nn$ with $n<h(T)$ we set
\begin{equation} \label{e24}
T\upharpoonright n= T(0)\cup ... \cup T(n).
\end{equation}
Notice that $h(T\upharpoonright n)=n+1$. An \textit{initial subtree} of $T$ is a subtree of $T$ of the form $T\upharpoonright n$
for some $n\in\nn$.

A tree $T$ is said to be \textit{balanced} if all maximal chains of $T$ have the same cardinality. It is said to be
\textit{uniquely rooted} if $|T(0)|=1$; the \textit{root} of a uniquely rooted tree $T$ is defined to be the node $T(0)$.

\subsection{Vector trees}

A \textit{vector tree} $\bfct$ is a nonempty finite sequence of trees having common height; this common height is defined
to be the \textit{height} of $\bfct$ and will be denoted by $h(\bfct)$.

For every vector tree $\bfct=(T_1, ...,T_d)$ and every $n\in\nn$ with $n< h(\bfct)$ we set
\begin{equation} \label{e25}
\bfct\upharpoonright n=(T_1\upharpoonright n, ..., T_d\upharpoonright n).
\end{equation}
A vector tree of this form is called a \textit{vector initial subtree} of $\bfct$. Also let
\begin{equation} \label{e26}
\bfct(n)=\big(T_1(n),...,T_d(n)\big) \text{ and } \otimes\bfct(n)= T_1(n)\times ...\times T_d(n).
\end{equation}
The \textit{level product of $\bfct$}, denoted by $\otimes\bfct$, is defined to be the set 
\begin{equation} \label{e28}
\bigcup_{n< h(\bfct)} \otimes\bfct(n).
\end{equation}
For every $\bft=(t_1,...,t_d)\in\otimes\bfct$ we set
\begin{equation} \label{e29}
\suc_{\bfct}(\bft)=\big( \suc_{T_1}(t_1), ..., \suc_{T_d}(t_d)\big).
\end{equation}
Finally, we say that a vector tree $\bfct=(T_1,...,T_d)$ is \textit{uniquely rooted} if for every $i\in [d]$ the tree $T_i$
is uniquely  rooted; the element $\bfct(0)$ is called the \textit{root} of $\bfct$.

\subsection{Strong subtrees and vector strong subtrees}

A subtree $S$ of a uniquely rooted tree $T$ is said to be \textit{strong} provided that: (a) $S$ is uniquely rooted and
 balanced, (b) every level of $S$ is a subset of some level of $T$, and (c) for every non-maximal node $s\in S$ and
every $t\in\immsuc_T(s)$ there exists a unique node $s'\in\immsuc_S(s)$ such that $t\mik s'$. The \textit{level set}
of a strong subtree $S$ of $T$ is defined to be the set
\begin{equation} \label{e210}
L_T(S)=\{ m\in\nn: \text{exists } n<h(S) \text{ with } S(n)\subseteq T(m)\}.
\end{equation}
The concept of a strong subtree is naturally extended to vector trees. Specifically, a \textit{vector strong subtree}
of a uniquely rooted vector tree $\bfct=(T_1,...,T_d)$ is a vector tree $\bfcs=(S_1,...,S_d)$ such that $S_i$ is a strong
subtree of $T_i$ for every $i\in [d]$ and $L_{T_1}(S_1)= ...= L_{T_d}(S_d)$.

\subsection{Homogeneous trees and vector homogeneous trees}

Let $b\in\nn$ with $b\meg 2$. By $b^{<\nn}$ we shall denote the set of all finite sequences having values in $\{0,...,b-1\}$. 
The empty sequence is denoted by $\varnothing$ and is included in $b^{<\nn}$. We view $b^{<\nn}$ as a tree equipped
with the (strict) partial order $\sqsubset$ of end-extension. Notice that $b^{<\nn}$ is a homogeneous tree with
branching number $b$. If $n\meg 1$, then $b^{<n}$ stands for the initial subtree of $b^{<\nn}$ of height $n$. 
For every $t,s\in b^{<\nn}$ by $t^{\con}s$ we shall denote the concatenation of $t$ and $s$.

For technical reasons we will not work with abstract homogeneous trees but with a concrete subclass. Observe that
all homogeneous trees with the same branching number are pairwise isomorphic, and so, such a restriction will have
no effect in the generality of our results.
\medskip

\noindent \textbf{Convention.} \textit{In the rest of the paper by the term ``homogeneous tree" (respectively, 
``finite homogeneous tree") we will always mean a strong subtree of $b^{<\nn}$ of infinite (respectively, finite)
height for some integer $b\meg 2$. For every, possibly finite, homogeneous tree $T$ by $b_T$ we shall denote the
branching number of $T$. We follow the same conventions for vector trees. In particular, by the term 
``vector homogeneous tree" (respectively, ``finite vector homogeneous tree") we will always mean a vector strong
subtree of $(b_1^{<\nn},...,b_d^{<\nn})$ of infinite (respectively, finite) height for some integers $b_1,...,b_d$
with $b_i\meg 2$ for all $i\in [d]$. For every, possibly finite, vector homogeneous tree $\bfct=(T_1,...,T_d)$
we set $b_{\bfct}=(b_{T_1},...,b_{T_d})$.}
\medskip

\noindent The above convention enables us to effectively enumerate the set of immediate successors of a given
non-maximal node of a, possibly finite, homogeneous tree $T$. Specifically, for every non-maximal $t\in T$ and
every $p\in\{0,...,b_T-1\}$ let
\begin{equation} \label{e211}
t^{\con_T}\!p= \immsuc_T(t)\cap \suc_{b_T^{<\nn}}(t^{\con}p).
\end{equation}
Notice that $\immsuc_T(t)=\big\{ t^{\con_T}\!p: p\in\{0,...,b_T-1\}\big\}$.

\subsection{Canonical isomorphisms and vector canonical isomorphisms}

Let $T$ and $S$ be two, possibly finite, homogeneous trees with the same branching number and the same height.
The \textit{canonical isomorphism} between $T$ and $S$ is defined to be the unique bijection $\ci:T\to S$ satisfying:
(a) $\ell_T(t)= \ell_S\big(\mathrm{I}(t)\big)$ for every $t\in T$, and (b) $\ci(t^{\con_T}\!p)=\ci(t)^{\con_S}\!p$
for every non-maximal $t\in T$ and every $p\in \{0,...,b_T-1\}$. Observe that if $R$ is a strong subtree of $T$, 
then the image $\ci(R)$ of $R$ under the canonical isomorphism is a strong subtree of $S$ and satisfies $L_T(R)=L_S\big(\ci(R)\big)$.

Respectively, let $\bfct=(T_1,...,T_d)$ and $\bfcs=(S_1,...,S_d)$ be two, possibly finite, vector homogeneous trees
with $b_\bfct=b_\bfcs$ and $h(\bfct)=h(\bfcs)$. For every $i\in [d]$ let $\ci_i$ be the canonical isomorphism between
$T_i$ and $S_i$. The \textit{vector canonical isomorphism} between $\otimes\bfct$ and $\otimes\bfcs$ is the map 
$\bfci: \otimes\bfct\to\otimes\bfcs$ defined by the rule
\begin{equation} \label{e213}
\bfci\big((t_1,...,t_d)\big)=\big(\ci_1(t_1), ..., \ci_d(t_d)\big).
\end{equation}
Notice that the vector canonical isomorphism $\bfci$ is a bijection.

\subsection{Milliken's Theorem}

For every, possibly finite, vector homogeneous tree $\bfct$ and every integer $1\mik k\mik h(\bfct)$ by $\strong_k(\bfct)$
we shall denote the set of all vector strong subtrees of $\bfct$ of height $k$. Moreover, for every $\ell\in \{0,...,k-1\}$ we set
\begin{equation} \label{emilliken1}
\strong^\ell_k(\bfct)=\{\bfcs\in\strong_k(\bfct): \bfcs\upharpoonright \ell=\bfct\upharpoonright\ell\}.
\end{equation}
If $\bfct$ is of infinite height, then $\strong_{\infty}(\bfct)$ stands for the set of all vector strong subtrees of $\bfct$
of infinite height; the set  $\strong^\ell_{\infty}(\bfct)$ is analogously defined. We will need the following elementary fact.
\begin{fact} \label{f21}
Let $d\in\nn$ with $d\meg 1$ and $b_1,...,b_d\in\nn$ with $b_i\meg 2$ for all $i\in [d]$. Let $m\in\nn$ and set
\begin{equation} \label{e214}
q(b_1,...,b_d,m)=\frac{\big( \prod_{i=1}^d b_i^{b_i} \big)^{m+1} -\big( \prod_{i=1}^d b_i \big)^{m+1}}{\prod_{i=1}^d b_i^{b_i}-\prod_{i=1}^d b_i}.
\end{equation}
Also let $\bfct$ be a finite vector homogeneous tree with $b_{\bfct}=(b_1,...,b_d)$ and $h(\bfct)\meg m+2$ and set
\begin{equation} \label{e215}
\strong_2(\bfct,m+1)=\big\{ \bfcf\in\strong_2(\bfct): \otimes\bfcf(1)\subseteq \otimes\bfct(m+1) \big\}
\end{equation}
Then the cardinality of the set $\strong_2(\bfct,m+1)$ is $q(b_1,...,b_d,m)$. In particular,
\begin{equation} \label{emilliken2}
|\strong_2(\bfct)|=\sum_{m=0}^{h(\bfct)-2} q(b_1,...,b_d,m)
\end{equation}
\end{fact}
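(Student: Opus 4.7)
The plan is to stratify $\strong_2(\bfct, m+1)$ by the level of the root. Any $\bfcf \in \strong_2(\bfct, m+1)$ has a common level set of the form $\{k, m+1\}$ for some $k \in \{0,1,\ldots,m\}$, so I will count the subtrees with fixed root level $k$ and then sum over $k$.

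For fixed $k$, I will argue that specifying $\bfcf = (F_1,\ldots,F_d) \in \strong_2(\bfct,m+1)$ with root level $k$ is equivalent to specifying (i) a root $\bfr = (r_1,\ldots,r_d) \in \otimes\bfct(k)$, and (ii) for every $i\in[d]$ and every $p \in \{0,\ldots,b_i-1\}$, a descendant of $r_i^{\con_{T_i}}\!p$ in $T_i(m+1)$, to serve as the unique element of $\immsuc_{F_i}(r_i)$ lying above $r_i^{\con_{T_i}}\!p$ (this is precisely clause (c) of the definition of a strong subtree; clauses (a) and (b) are then automatic, and any such pair of choices does assemble into a strong subtree of the required shape). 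The number of choices in (i) is $\prod_{i=1}^d b_i^k = B^k$, writing $B := \prod_{i=1}^d b_i$; and since $T_i$ is homogeneous with branching number $b_i$, each $r_i^{\con_{T_i}}\!p$ has exactly $b_i^{m-k}$ descendants at level $m+1$, so the number of choices in (ii) is $\prod_{i=1}^d (b_i^{m-k})^{b_i} = A^{m-k}$, writing $A := \prod_{i=1}^d b_i^{b_i}$. Hence the number of elements of $\strong_2(\bfct, m+1)$ with root level $k$ equals $B^k A^{m-k}$.

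Summing over $k \in \{0,1,\ldots,m\}$ and invoking the finite geometric identity
\[
\sum_{k=0}^m B^k A^{m-k} = \frac{A^{m+1} - B^{m+1}}{A - B},
\]
which is well-defined because each $b_i \meg 2$ forces $A > B$, I obtain $|\strong_2(\bfct, m+1)| = q(b_1,\ldots,b_d,m)$. For the second identity, every $\bfcf \in \strong_2(\bfct)$ has its level-1 nodes at some level $m+1$ with $m \in \{0,1,\ldots,h(\bfct)-2\}$, and the sets $\{\strong_2(\bfct, m+1)\}_{m}$ partition $\strong_2(\bfct)$; summing the first formula over these values of $m$ yields \eqref{emilliken2}. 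The argument is routine combinatorial counting, and the only step deserving careful verification is the bijective correspondence in (i)-(ii), which is immediate from the definition of a strong subtree.
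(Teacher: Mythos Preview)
Your argument is correct. The paper does not actually supply a proof of this statement: it is presented as an ``elementary fact'' and left unproved, so there is nothing to compare against beyond noting that your stratification by root level and subsequent geometric sum is precisely the routine verification one would expect.
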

Notice that if $\bfct=(T_1,...,T_d)$ is a vector homogeneous tree, then the set $\strong_\infty(\bfct)$ is a $G_\delta$
(hence Polish) subspace of $\mathcal{P}(T_1\times...\times T_d)$. The same remark, of course, applies to the set 
$\strong^\ell_{\infty}(\bfct)$ for every $\ell\in\nn$. The following partition result is due to K. Milliken (see \cite[Theorem 2.1]{Mi2}).
\begin{thm} \label{infinite milliken}
Let $\bfct$ be a vector homogeneous tree and $\mathcal{C}\subseteq \strong_{\infty}(\bfct)$ be Borel. Then there
exists $\bfcs\in\strong_{\infty}(\bfct)$ such that either $\strong_{\infty}(\bfcs)\subseteq \mathcal{C}$ or 
$\strong_{\infty}(\bfcs)\cap \mathcal{C}=\varnothing$. Moreover, if $\ell\in\nn$ and   
$\mathcal{B}\subseteq \strong^\ell_{\infty}(\bfct)$ is Borel, then there exists $\bfcs\in\strong^\ell_{\infty}(\bfct)$
such that either $\strong^\ell_{\infty}(\bfcs)\subseteq \mathcal{B}$ or $\strong^\ell_{\infty}(\bfcs)\cap \mathcal{B}=\varnothing$.

In particular, for every integer $k\meg 1$ and every finite partition of $\strong_k(\bfct)$ there exists $\bfcs\in\strong_{\infty}(\bfct)$
such that the set $\strong_k(\bfcs)$ is monochromatic. Respectively, for every $\ell\in \{0,...,k-1\}$ and every finite
partition of $\strong^\ell_k(\bfct)$ there exists $\bfcs\in\strong^{\ell}_{\infty}(\bfct)$ such that the set
$\strong^{\ell}_k(\bfcs)$ is monochromatic.
\end{thm}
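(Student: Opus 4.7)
The plan is to follow the standard topological-Ramsey-space template---Galvin--Prikry for $\nn$, Carlson--Simpson for words---using the vector Halpern--L\"{a}uchli theorem as the underlying pigeonhole. The proof has three layers: a Polish-space setup, a Borel-to-open reduction, and a combinatorial-forcing argument that consumes the pigeonhole.

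First I would equip $\strong_{\infty}(\bfct)$ with the subspace topology it inherits as a subset of $\mathcal{P}(T_1\times\cdots\times T_d)$. The sets $[\bfcr]:=\{\bfcs\in\strong_{\infty}(\bfct):\bfcs\upharpoonright n=\bfcr\}$, as $\bfcr$ ranges over finite vector strong subtrees of $\bfct$, form a countable clopen basis and make $\strong_{\infty}(\bfct)$ a Polish space. To handle arbitrary Borel $\mathcal{C}$, I would verify that the family of sets enjoying the conclusion of the theorem (call them \emph{Ramsey sets}) is closed under countable unions and complements, so it is a $\sigma$-algebra; since it clearly contains the basic clopen sets, it contains every Borel set. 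The reduction thus rests on treating (metrically) open $\mathcal{C}$.

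For the core step I would carry out a Mathias-style fusion. Given a finite $\bfcr$ of height $n+1$ and a working extension $\bfcu\in\strong^{n}_{\infty}(\bfct)$ with $\bfcu\upharpoonright n=\bfcr$, call the pair $(\bfcr,\bfcu)$ \emph{accepting} if some $\bfcs\in\strong^{n}_{\infty}(\bfcu)$ satisfies $\strong^{n}_{\infty}(\bfcs)\subseteq\mathcal{C}$. A diagonal argument over the countably many candidate $\bfcr$'s produces $\bfcv$ along which acceptance of each $\bfcr$ stabilizes in the tail. I then build the desired $\bfcs$ by choosing one level at a time inside $\bfcv$: at stage $m$ the possible one-level extensions of the current stem form a level product $\otimes\bfcv(k_m)$, which receives a finite coloring by the acceptance label of the resulting extended stem. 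Applying the vector Halpern--L\"{a}uchli theorem, I can refine $\bfcv$ (at that level and below) so as to homogenize this coloring; iterating produces a $\bfcs$ along which the acceptance label depends only on the height of the stem, and a final pigeonhole over these finitely many labels decides whether $\strong_{\infty}(\bfcs)\subseteq\mathcal{C}$ or $\strong_{\infty}(\bfcs)\cap\mathcal{C}=\varnothing$.

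The variant for $\strong^{\ell}_{\infty}(\bfct)$ requires only the minor modification of starting the fusion from the frozen initial segment $\bfct\upharpoonright\ell$. The ``in particular'' clauses for $\strong_k(\bfct)$ and $\strong^{\ell}_k(\bfct)$ follow by viewing a finite $r$-partition as a Borel partition of $\strong_{\infty}(\bfct)$ via the map $\bfcs\mapsto\bfcs\upharpoonright(k-1)$ and iterating the Borel statement $r-1$ times. The main obstacle is the bookkeeping in the fusion: one must simultaneously control the evolving level set, the acceptance labels, and the ``unique immediate successor'' clause built into the definition of strong subtree. The vector Halpern--L\"{a}uchli theorem is precisely calibrated to do this---it homogenizes the coloring on a level product while permitting one to preserve the full branching structure at the next level---so once the fusion is set up correctly, the pigeonhole steps compose cleanly.
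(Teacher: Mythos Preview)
The paper does not supply its own proof of this theorem; it is stated as background and attributed to Milliken, with a citation to \cite[Theorem~2.1]{Mi2}. So there is no in-paper argument to compare against.

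Your outline follows the standard topological-Ramsey-space template (as in \cite{Mi2} or the abstract treatment in \cite{To}) and is essentially the right approach. One point deserves more care: the assertion that ``the family of Ramsey sets is closed under countable unions and complements, so it is a $\sigma$-algebra'' is not as innocent as you make it sound. Closure under complements is immediate from the symmetric form of the conclusion, but closure under countable unions does not follow from the bare definition of ``Ramsey''. The usual route is to work instead with the \emph{completely Ramsey} sets---those for which the dichotomy holds below every pair $(\bfcr,\bfcu)$ of a finite stem and an infinite extension---and to run a fusion argument to show that \emph{this} class is closed under countable intersections. Your Mathias-style fusion is exactly the engine for that step, but it belongs inside the $\sigma$-algebra verification rather than after it. Once you make that adjustment (or, equivalently, pass to the Ellentuck-type topology and show that nowhere dense implies Ramsey null), the rest of your plan---Halpern--L\"{a}uchli as the level-by-level pigeonhole, the frozen-stem variant for $\strong^{\ell}_{\infty}(\bfct)$, and the pullback along $\bfcs\mapsto\bfcs\upharpoonright(k-1)$ for the finite-height corollaries---goes through as you describe.
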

Theorem \ref{infinite milliken} has a finite version which is also due to K. Milliken (see \cite{Mi1}).
\begin{thm} \label{t22}
For every integer $d\meg 1$, every $b_1,...,b_d\in\nn$ with $b_i\meg 2$ for all $i\in [d]$, every pair of integers
$m\meg k\meg 1$ and every integer $r\meg 2$ there exists an integer $M$ with the following property. For every finite
vector homogeneous tree $\bfct$ with $b_{\bfct}=(b_1,...,b_d)$ and of height at least $M$ and every $r$-coloring
of the set $\strong_k(\bfct)$ there exists $\bfcs\in\strong_m(\bfct)$ such that the set $\strong_k(\bfcs)$ is monochromatic. 
The least integer $M$ with this property will be denoted by $\mil(b_1,...,b_d|m,k,r)$.
\end{thm}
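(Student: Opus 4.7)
The plan is to deduce Theorem \ref{t22} from its infinite counterpart, Theorem \ref{infinite milliken}, by a standard compactness (diagonalization) argument. Suppose, for contradiction, that for some choice of $d,b_1,\dots,b_d,m,k,r$ no such integer $M$ exists. Then for every integer $N\meg m$ we may choose a finite vector homogeneous tree $\bfct_N$ with $b_{\bfct_N}=(b_1,\dots,b_d)$ and $h(\bfct_N)\meg N$ carrying a \emph{bad} $r$-coloring of $\strong_k(\bfct_N)$, i.e.\ no element $\bfcs\in\strong_m(\bfct_N)$ has $\strong_k(\bfcs)$ monochromatic.

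Via the vector canonical isomorphism \eqref{e213} applied to the initial subtree $\bfct_N\upharpoonright(N-1)$, we may transfer these bad colorings to the universal vector homogeneous tree $\bfcu:=(b_1^{<\nn},\dots,b_d^{<\nn})$; concretely, this produces an $r$-coloring $\chi_N:\strong_k(\bfcu\upharpoonright(N-1))\to[r]$ such that no element of $\strong_m(\bfcu\upharpoonright(N-1))$ has a $\chi_N$-monochromatic $\strong_k$. Since $\strong_k(\bfcu)$ is countable and $[r]$ is finite, and since every $\bfcr\in\strong_k(\bfcu)$ lies in $\strong_k(\bfcu\upharpoonright(N-1))$ for all sufficiently large $N$, a standard diagonal extraction yields a subsequence $N_1<N_2<\cdots$ and a function $\chi:\strong_k(\bfcu)\to[r]$ such that for every $\bfcr\in\strong_k(\bfcu)$ there exists $i_0$ with $\chi_{N_i}(\bfcr)=\chi(\bfcr)$ for all $i\meg i_0$.

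Apply the ``in particular'' clause of Theorem \ref{infinite milliken} to the finite partition of $\strong_k(\bfcu)$ induced by $\chi$; this yields $\bfcs\in\strong_\infty(\bfcu)$ such that $\strong_k(\bfcs)$ is $\chi$-monochromatic. Let $\bfcs':=\bfcs\upharpoonright(m-1)\in\strong_m(\bfcu)$; then $\strong_k(\bfcs')\subseteq\strong_k(\bfcs)$ is finite and $\chi$-monochromatic. Choose $i$ large enough that $\bfcs'\subseteq\bfcu\upharpoonright(N_i-1)$ and that $\chi_{N_i}$ agrees with $\chi$ on each of the finitely many elements of $\strong_k(\bfcs')$. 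Pulling $\bfcs'$ back through the canonical isomorphism to an element of $\strong_m(\bfct_{N_i})$, one obtains a height-$m$ vector strong subtree of the original tree whose $\strong_k$ is monochromatic under the original coloring, contradicting the badness of that coloring.

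The main (and mild) obstacle is purely bookkeeping: one must verify that the transfer by canonical isomorphisms is compatible with restrictions to initial subtrees and with the taking of strong subtrees, and that the countable set $\strong_k(\bfcu)$ is indeed exhausted by the sets $\strong_k(\bfcu\upharpoonright(N-1))$ as $N\to\infty$. Both points follow immediately from the conventions of Section~2, which realize every finite vector homogeneous tree with the prescribed branching numbers as an initial subtree of $\bfcu$ (up to canonical isomorphism) and guarantee that any height-$k$ vector strong subtree of $\bfcu$ has bounded level set.
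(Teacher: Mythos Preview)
Your compactness argument is correct and is the standard way to deduce the finite version of Milliken's theorem from the infinite one. The paper, however, does not prove Theorem~\ref{t22} at all: it is stated as a known result with a citation to Milliken's original paper \cite{Mi1}, and the subsequent discussion merely records that explicit upper bounds for $\mil(b_1,\dots,b_d|m,k,r)$ are available from \cite{So}. So there is nothing to compare; your proof simply supplies what the paper omits. One minor remark: you may as well take $h(\bfct_N)=N$ from the outset (restricting a bad coloring to an initial subtree preserves badness), which streamlines the transfer via the canonical isomorphism.
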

The original proof of Theorem \ref{t22} did not provide quantitative information on the numbers $\mil(b_1,...,b_d|m,k,r)$. 
However, an analysis of the finite version of Milliken's Theorem has been carried out recently by M. Soki\'{c} yielding explicit
and reasonable upper bounds. In particular, it was shown in \cite{So} (see, also, \cite[\S 10]{DKT2}) that for every integer
$k\meg 1$ there exists a primitive recursive function $\phi_k:\nn^3\to \nn$ belonging to the class $\mathcal{E}^{5+k}$ of
Grzegorczyk's hierarchy such that for every integer $d\meg 1$, every $b_1,...,b_d\in\nn$ with $b_i\meg 2$ for all $i\in [d]$,
every integer $m\meg k$ and every integer $r\meg 2$ we have 
\begin{equation} \label{e216}
\mil(b_1,...,b_d|m,k,r) \mik \phi_k\Big( \prod_{i=1}^d b_i^{b_i},m,r\Big).
\end{equation}
We close this subsection with the following consequence of Theorem \ref{t22}.
\begin{cor} \label{c24}
Let $d\in\nn$ with $d\meg 1$ and $b_1,...,b_d\in\nn$ with $b_i\meg 2$ for all $i\in [d]$. Also let $m,k,r\in\nn$ with $m\meg k\meg 1$
and $r\meg 2$. If $\bfct$ is finite vector homogeneous tree with $b_{\bfct}=(b_1,...,b_d)$ and
\begin{equation} \label{e218}
h(\bfct)\meg \mil\big(\underbrace{b_1,...,b_1}_{b_1-\mathrm{times}}, ..., \underbrace{b_d,...,b_d}_{b_d-\mathrm{times}}|m,k,r\big)+1
\end{equation}
then for every $r$-coloring of $\strong_{k+1}^0(\bfct)$ there exists $\bfcr\in\strong_{m+1}^0(\bfct)$ such that the set
$\strong_{k+1}^0(\bfcr)$ is monochromatic.
\end{cor}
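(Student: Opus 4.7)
The plan is to reduce Corollary~\ref{c24} to a direct application of Milliken's finite theorem (Theorem~\ref{t22}) by passing to an auxiliary vector homogeneous tree $\bfct'$ obtained from $\bfct$ by ``removing the root.'' Let $\bfr=(r_1,\ldots,r_d)$ be the root of $\bfct=(T_1,\ldots,T_d)$. For each $i\in[d]$ and $p\in\{0,\ldots,b_i-1\}$, let $T_{i,p}=\suc_{T_i}(r_i^{\con_{T_i}}p)$; after the obvious reindexing of levels, $T_{i,p}$ is a finite homogeneous tree of branching number $b_i$ and height $h(\bfct)-1$. Define
\[
 \bfct'=\bigl(T_{1,0},\ldots,T_{1,b_1-1},\,T_{2,0},\ldots,T_{2,b_2-1},\,\ldots,\,T_{d,0},\ldots,T_{d,b_d-1}\bigr),
\]
a finite vector homogeneous tree with $b_1+\cdots+b_d$ components, branching vector $(\underbrace{b_1,\ldots,b_1}_{b_1},\ldots,\underbrace{b_d,\ldots,b_d}_{b_d})$, and height $h(\bfct')=h(\bfct)-1$.

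The key step is to construct a natural bijection $\Phi\colon\strong_{k+1}^0(\bfct)\to\strong_k(\bfct')$. Given $\bfcs=(S_1,\ldots,S_d)\in\strong_{k+1}^0(\bfct)$, each $S_i$ has $\{r_i\}$ as its $0$-level and, by the strong-subtree property applied at the root, contains a unique node $s_{i,p}\in S_i(1)\cap \suc_{T_i}(r_i^{\con_{T_i}}p)$ for every $p\in\{0,\ldots,b_i-1\}$. The set $R_{i,p}:=\suc_{S_i}(s_{i,p})$ is then a strong subtree of $T_{i,p}$ of height $k$, and declaring $\Phi(\bfcs)=(R_{1,0},\ldots,R_{d,b_d-1})$ gives a vector strong subtree of $\bfct'$ of height $k$. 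The common-level-set requirement for $\bfcs$ across the coordinates of $\bfct$ translates (shifted by $1$) to the common-level-set requirement for $\Phi(\bfcs)$ in $\bfct'$, and conversely the inverse of $\Phi$ is given by adjoining the root $r_i$ to the union of the $b_i$ strong subtrees in the $i$-th block. Crucially, this bijection is \emph{hereditary}: if $\bfcr\in\strong_{m+1}^0(\bfct)$, then $\Phi$ restricts to a bijection $\strong_{k+1}^0(\bfcr)\leftrightarrow\strong_k(\Phi(\bfcr))$.

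With $\Phi$ in hand, the proof is a standard pullback. Given an $r$-coloring $\chi$ of $\strong_{k+1}^0(\bfct)$, define $\chi'$ on $\strong_k(\bfct')$ by $\chi'(\bfcr')=\chi(\Phi^{-1}(\bfcr'))$. By the hypothesis on $h(\bfct)$, we have $h(\bfct')\meg \mil(\underbrace{b_1,\ldots,b_1}_{b_1},\ldots,\underbrace{b_d,\ldots,b_d}_{b_d}|m,k,r)$, so Theorem~\ref{t22} applied to $\bfct'$ yields $\bfcs'\in\strong_m(\bfct')$ such that $\strong_k(\bfcs')$ is $\chi'$-monochromatic. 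Setting $\bfcr=\Phi^{-1}(\bfcs')\in\strong_{m+1}^0(\bfct)$ and using the hereditary property of $\Phi$, one concludes that $\strong_{k+1}^0(\bfcr)$ is $\chi$-monochromatic.

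The only real content is the bijection $\Phi$; everything else is formal. The step that requires the most attention is the verification that $\Phi$ really sends vector strong subtrees to vector strong subtrees --- namely that the ``same level across coordinates'' condition in $\bfct$ matches the corresponding condition in $\bfct'$ after shifting the level numbering by $1$, and that the strong-subtree-of relation is preserved in both directions. This is a bookkeeping exercise in the definitions of Section~2, with no combinatorial obstacle.
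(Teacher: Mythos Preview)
Your proposal is correct and is precisely the argument the paper has in mind: although the paper states Corollary~\ref{c24} without proof, the appearance of the branching vector $(\underbrace{b_1,\ldots,b_1}_{b_1},\ldots,\underbrace{b_d,\ldots,b_d}_{b_d})$ in the bound~\eqref{e218} makes clear that the intended reduction is exactly the ``split at the root'' construction you describe, and this same device is used elsewhere in the paper (see the proof of Corollary~\ref{erdos-hajnal} and Step~1 in the proof of Lemma~\ref{c67}). The verification of the bijection $\Phi$ and its hereditary property is routine, as you note.
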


\subsection{The ``uniform" version of the finite density Halpern--L\"{a}uchli Theorem}

We will need the following result (see \cite[Theorem 3]{DKT2}).
\begin{thm} \label{uniform dhl}
For every integer $d\meg 1$, every $b_1,...,b_d\in\nn$ with $b_i\meg 2$ for all $i\in [d]$, every integer $k\meg 1$ and 
every real $0<\ee\mik 1$ there exists an integer $N$ with the following property. If $\bfct=(T_1,...,T_d)$ is a vector
homogeneous tree with $b_{\bfct}=(b_1,...,b_d)$, $L$ is a finite subset of $\nn$ of cardinality at least $N$ and $D$ is
a subset of the level product of $\bfct$ satisfying $|D\cap \otimes\bfct(n)| \meg \ee |\!\otimes\bfct(n)|$ for every $n\in L$, 
then there exists a vector strong subtree $\bfcs$ of $\bfct$ of height $k$ such that the level product of $\bfcs$ is contained
in $D$. The least integer $N$ with this property will be denoted by $\udhl(b_1,...,b_d|k,\ee)$.
\end{thm}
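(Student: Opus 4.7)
The plan is to derive Theorem \ref{uniform dhl} from the infinite density Halpern--L\"{a}uchli Theorem (Theorem \ref{dhl}) via a compactness argument, together with an averaging/recentering step that overcomes the fact that the finite level sets $L_N$ witnessing failure may drift off to infinity. Suppose the statement fails for some parameters $d$, $(b_1,\ldots,b_d)$, $k$, $\ee$. Then for every $N\in\nn$ we obtain a finite vector homogeneous tree $\bfct^{(N)}$ with $b_{\bfct^{(N)}}=(b_1,\ldots,b_d)$, a subset $L_N\subseteq\{0,\ldots,h(\bfct^{(N)})-1\}$ with $|L_N|\meg N$, and a set $D^{(N)}\subseteq\otimes\bfct^{(N)}$ satisfying the density hypothesis on $L_N$ but containing the level product of no height-$k$ vector strong subtree.

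The key preparatory step is a recentering. For each $N$, replace $\bfct^{(N)}$ by a vector strong subtree $\bfcu^{(N)}$ whose level set equals $L_N$; this is a finite vector homogeneous tree of height $|L_N|$ with the same branching numbers $(b_1,\ldots,b_d)$. Averaging over uniformly random such $\bfcu^{(N)}$ shows that the expected density of $D^{(N)}\cap\otimes\bfcu^{(N)}(j)$ inside $\otimes\bfcu^{(N)}(j)$ coincides with the density of $D^{(N)}$ inside $\otimes\bfct^{(N)}(\ell_j)$, where $\ell_j$ is the $j$-th element of $L_N$; in particular this expectation is at least $\ee$. Invoking Milliken's Theorem (in the form of Theorem \ref{t22} or Corollary \ref{c24}) to partition candidate subtrees according to a thresholded density pattern, one then extracts a specific $\bfcu^{(N)}$, still of height a large fraction of $|L_N|$, on every level of which $D^{(N)}$ has density bounded below by a positive constant $\ee'=\ee'(\ee)>0$.

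Having relocated the counterexamples to trees with uniform density $\meg\ee'$ on every level, embed each $\bfcu^{(N)}$ canonically as an initial subtree of the universal vector tree $\bfct=(b_1^{<\nn},\ldots,b_d^{<\nn})$. Compactness of $\{0,1\}^{\otimes\bfct}$ then produces a pointwise subsequential limit $D^\infty\subseteq\otimes\bfct$ which has density $\meg\ee'$ on every level of $\bfct$. Theorem \ref{dhl} now yields an infinite vector strong subtree $\bfcs$ of $\bfct$ with $\otimes\bfcs\subseteq D^\infty$. The truncation $\bfcs\upharpoonright(k-1)$ is a height-$k$ vector strong subtree whose level product lies in $D^\infty$; by the construction of $D^\infty$ the same subtree, viewed inside $\bfct^{(N)}$ for all sufficiently large $N$, has level product inside $D^{(N)}$, contradicting the choice of $\bfct^{(N)}, L_N, D^{(N)}$ and establishing the existence of $\udhl(b_1,\ldots,b_d|k,\ee)$.

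The hard part is the recentering. A naive union bound fails to convert the averaged density estimate on the levels of random $\bfcu^{(N)}$ into a simultaneous lower bound on every single level, because the number of levels grows with $N$, while reverse Markov only recovers density on a fixed fraction of levels. Circumventing this is precisely where Milliken's pigeonhole intervenes, and extracting the explicit quantitative bound $\udhl(b_1,\ldots,b_d|k,\ee)$ advertised in \cite{DKT2} requires a careful quantitative implementation of each averaging and coloring step so that the losses compound in a controlled way.
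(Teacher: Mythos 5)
The paper itself does not prove Theorem~\ref{uniform dhl}; it imports it from \cite[Theorem 3]{DKT2}, where the argument is an effective, finitary density-increment iteration (this is what yields the explicit Ackermann-type bounds alluded to in \S 2.7). Your compactness reduction to Theorem~\ref{dhl} is therefore a genuinely different route, and were it to work it would be conceptually simpler but entirely non-quantitative. Your averaging computation is correct: for a uniformly random vector strong subtree $\bfcu^{(N)}$ of $\bfct^{(N)}$ with level set $L_N=\{\ell_0<\ell_1<\ldots\}$, the symmetry of the automorphism group of $(b_1^{<\nn},\ldots,b_d^{<\nn})$ gives that each $\bft\in\otimes\bfct^{(N)}(\ell_j)$ lies in $\otimes\bfcu^{(N)}(j)$ with the same probability, whence the expected density of $D^{(N)}$ on $\otimes\bfcu^{(N)}(j)$ equals its density on $\otimes\bfct^{(N)}(\ell_j)\meg\ee$.

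The gap is the recentering step, and it is fatal. You assert that Milliken's theorem lets you pass from this per-level expectation to a \emph{single} $\bfcu^{(N)}$, of height a constant fraction of $|L_N|$, with density $\meg\ee'$ on \emph{every} level. Milliken's theorem (in the form of Theorem~\ref{t22} or Corollary~\ref{c24}) homogenizes finite colorings of $\strong_k(\bfct)$; it gives no control over the \emph{density} of a fixed set inside a strong subtree, because restricting to a strong subtree $\bfcs$ changes the ambient level $\otimes\bfct(\ell)$ into the much smaller $\otimes\bfcs(j)$, and the relative density of $D^{(N)}$ can collapse to zero under such a restriction. Concretely, if you color height-$k$ subtrees by a thresholded density pattern, the ``all-low'' pattern is a perfectly admissible monochromatic outcome and nothing in the hypotheses rules it out on a large subtree. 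Worse, the statement you want from this step --- a strong subtree of height $\Omega(|L_N|)$, with level set inside $L_N$, on every level of which $D^{(N)}$ has density $\meg\ee'$ --- is, modulo the ensuing compactness and Theorem~\ref{dhl}, essentially a reformulation of Theorem~\ref{uniform dhl} itself. You have not reduced to anything easier, and I see no way to supply the missing argument with the tools you invoke.

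What averaging plus reverse Markov \emph{does} give is a $\bfcu^{(N)}$ with density $\meg\ee/2$ on at least an $\ee/2$-fraction of its levels. But this weaker conclusion is also insufficient for the compactness step: the set of good levels of $\bfcu^{(N)}$ may drift off to infinity as $N\to\infty$ (e.g.\ it could always be the top $\ee/2$-fraction of $\{0,\ldots,|L_N|-1\}$), in which case the diagonal pointwise limit $D^\infty$ has density $0$ on every fixed level of $\bfct_\infty$, the hypothesis $\limsup_n\dens(D^\infty\cap\otimes\bfct_\infty(n))>0$ of Theorem~\ref{dhl} fails, and the contradiction you want never materializes. To run a compactness argument one would have to pin the good levels down to a fixed initial segment, uniformly in $N$, and nothing in your recentering (nor, as far as I can tell, in any simple averaging scheme) achieves that.
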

We point out that the case ``$d=1$" of Theorem \ref{uniform dhl} is due to J. Pach, J. Solymosi and G. Tardos \cite{PST}
who obtained the upper bound $\udhl(b|k,\ee)=O_{b,\ee}(k)$. The proof of the higher-dimensional case, given in \cite{DKT2},
is effective and gives explicit upper bounds for the numbers $\udhl(b_1,...,b_d|k,\ee)$. These upper bounds, however, have
an Ackermann-type dependence with respect to the ``dimension" $d$.

\subsection{Embedding finite subsets of the level product of a vector homogeneous tree}

We will need the following embedding result.
\begin{prop} \label{ap-p1}
Let $d\in\nn$ with $d\meg 1$ and $\bfct=(T_1,...,T_d)$ be a vector homogeneous tree. Then for every integer $n\meg 1$ and
every subset $F$ of the level product $\otimes\bfct$ of $\bfct$ of cardinality $n$ there exists a vector strong subtree 
$\bfcs$ of $\bfct$ of height $d(2n-1)$ such that the set $F$ is contained in the level product $\otimes\bfcs$ of $\bfcs$.
\end{prop}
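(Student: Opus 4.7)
The plan is to reduce Proposition~\ref{ap-p1} to a single-tree embedding statement, apply it in each coordinate with a common level set, and then adjust the total height.

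For each $i\in [d]$, let $G_i=\{t_i:(t_1,\ldots,t_d)\in F\}\subseteq T_i$ be the projection of $F$ to the $i$-th coordinate, and let $\bar G_i$ denote its meet-closure in $T_i$ (the smallest subset of $T_i$ containing $G_i$ and closed under binary meet). A standard tree-Steiner count --- maximal elements of $\bar G_i$ lie in $G_i$, while each element of $\bar G_i\setminus G_i$ arises as the meet of elements living in distinct $T_i$-branches and therefore has at least two children inside $\bar G_i$ --- gives $|\bar G_i|\mik 2|G_i|-1\mik 2n-1$. Let $L_i\subseteq\nn$ be the set of $T_i$-levels hit by $\bar G_i$, so $|L_i|\mik 2n-1$, and put $L=L_1\cup\cdots\cup L_d$, so $|L|\mik d(2n-1)$.

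The key step is to construct, for each $i\in[d]$, a strong subtree $S_i$ of $T_i$ with $L_{T_i}(S_i)=L$ and $\bar G_i\subseteq S_i$. Granting this, $\bfcs_0=(S_1,\ldots,S_d)$ is a vector strong subtree of $\bfct$ with common level set $L$ and height $|L|\mik d(2n-1)$; moreover, for every $f=(f^1,\ldots,f^d)\in F$ each coordinate $f^i\in G_i\subseteq S_i$ sits at the $T_i$-level $\ell_{T_i}(f^i)$, which is common to all $i$ and lies in $L_i\subseteq L$, so $f\in\otimes\bfcs_0$. If $|L|<d(2n-1)$, we promote the height to exactly $d(2n-1)$ by iteratively extending upward: pick a fresh common integer $\ell^*$ larger than everything used, and in every coordinate and at every current leaf $s$ of $S_i$ include one descendant of $s^{\con_{T_i}}\!p$ at $T_i$-level $\ell^*$ for each branch $p\in\{0,\ldots,b_{T_i}-1\}$; this bumps the height of every $S_i$ by one while preserving strongness, and we repeat until we reach $d(2n-1)$, producing the required $\bfcs$.

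The single remaining technical point, and the main obstacle, is the single-tree lemma. We build $S_i$ by induction along the enumeration $L=\{\ell'_0<\cdots<\ell'_{|L|-1}\}$: place in $S_i(0)$ the unique $T_i$-ancestor of the root of $\bar G_i$ at $T_i$-level $\ell'_0$ (which exists because $\ell'_0=\min L\mik\min L_i$), and having built $S_i$ up through $T_i$-level $\ell'_m$, extend it to $T_i$-level $\ell'_{m+1}$ by choosing, at every $s\in S_i\cap T_i(\ell'_m)$ and every branch $p\in\{0,\ldots,b_{T_i}-1\}$ of $s$, a single descendant of $s^{\con_{T_i}}\!p$ at $T_i$-level $\ell'_{m+1}$ to serve as the branch-$p$ immediate successor of $s$ in $S_i$. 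This choice must lie below every $\bar G_i$-node descended from $s^{\con_{T_i}}\!p$ at $T_i$-levels $\meg\ell'_{m+1}$; if no such node exists, any descendant works, and otherwise meet-closedness supplies the meet $\mu\in\bar G_i$ of all such nodes, with $s^{\con_{T_i}}\!p\mik\mu$. Since $\ell_{T_i}(\mu)\in L_i\subseteq L$ and $\ell_{T_i}(\mu)>\ell'_m$ force $\ell_{T_i}(\mu)\meg\ell'_{m+1}$, the $T_i$-ancestor of $\mu$ at $T_i$-level $\ell'_{m+1}$ is a valid choice. An analogous meet-closure argument shows that there is at most one $\bar G_i$-node at $T_i$-level $\ell'_{m+1}$ descended from $s^{\con_{T_i}}\!p$, so distinct branches yield distinct successors automatically. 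Iterating this construction over branches and levels produces the required $S_i$.
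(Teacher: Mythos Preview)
Your proof is correct and follows essentially the same strategy as the paper's: project $F$ to each coordinate, embed each projection into a strong subtree occupying at most $2n-1$ levels, and then coordinate the level sets across the $d$ trees to obtain a vector strong subtree of height $d(2n-1)$. The only difference is that the paper quotes the single-tree embedding as a black box (\cite[Corollary~38]{DKT1}) and afterwards enlarges each $R_i$ to a common level set, whereas you supply a direct meet-closure construction with the common level set $L$ built in from the start; this makes your argument self-contained but otherwise parallels the paper's.
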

\begin{proof}
Before we proceed to the details we need, first, to introduce some pieces of notation. Specifically, let $T$ be a homogeneous
tree and $A$ be a nonempty subset of $T$. The \textit{level set} $L_T(A)$ of $A$ in $T$ is defined to be the set 
$\{n\in\nn: T(n)\cap A\neq\varnothing\}$. Now fix a subset $F$ of $\otimes\bfct$ with $|F|=n$ and for every $i\in [d]$ let
\begin{equation} \label{ap-e6}
F_i=\{t\in T_i: \text{there exists } (t_1,...,t_d)\in F \text{ with } t=t_i\}
\end{equation}
be the projection of the set $F$ on the tree $T_i$. Notice that there exists a subset $L$ of $\nn$ with $|L|\mik n$ such that
$L_{T_i}(F_i)=L$ for every $i\in [d]$. Moreover,  $|F_i|\mik |F|$ for every $i\in [d]$. Finally, observe that
\begin{equation} \label{ap-e7}
F\subseteq \bigcup_{n\in L} \big(T_1(n)\cap F_1\big)\times ... \times \big(T_d(n)\cap F_d\big).
\end{equation}
By \cite[Corollary 38]{DKT1}, for every $i\in [d]$ there exists a strong subtree $R_i$ of $T_i$ of height $2n-1$
such that $F_i\subseteq R_i$. Since
\begin{equation}
|L_{T_1}(R_1) \cup ... \cup L_{T_d}(R_d)| \mik d(2n-1)
\end{equation}
we may select a subset $M$ of $\nn$ with $L_{T_1}(R_1)\cup ...\cup L_{T_d}(R_d)\subseteq M$ and $|M|=d(2n-1)$. 
Next, for every $i\in [d]$ we select a strong subtree $S_i$ of $T_i$ with $L_{T_i}(S_i)=M$ and $R_i\subseteq S_i$. 
By (\ref{ap-e7}), we see that the vector strong subtree $\bfcs=(S_1,...,S_d)$ of $\bfct$ is as desired.
\end{proof}
We point out that the bound of the height of the vector strong subtree obtained by Proposition \ref{ap-p1} is not
optimal. Actually, by appropriately modifying the proof of \cite[Corollary 38]{DKT1} and arguing as above, it is
possible to show that the desired vector strong subtree can be chosen to have height at most $n+d(n-1)$, an upper
bound which is easily seen to be sharp. Such an improvement, however, has only minor effect on the estimates obtained
in the rest of the paper, and so, we prefer not to bother the reader with it.

\subsection{Probabilistic preliminaries}

For every $0<\theta<\ee\mik 1$ and every integer $k\meg 2$ we set
\begin{equation} \label{2pr-e1}
\Sigma(\theta,\ee,k)=\Big\lceil \frac{k(k-1)}{2(\ee^k-\theta^k)}\Big\rceil.
\end{equation}
We will need the following well-known fact. We give the proof for completeness.
\begin{lem} \label{correlation}
Let $0< \theta< \ee\mik 1$ and $k,N\in\nn$ with $k\meg 2$ and $N\meg\Sigma(\theta,\ee,k)$. Also let $(A_i)_{i=1}^N$
be a family of measurable events in a probability space $(\Omega,\Sigma,\mu)$ such that $\mu(A_i)\meg\ee$ for all
$i\in [N]$. Then there exists a subset $F$ of $[N]$ of cardinality $k$ such that
\begin{equation} \label{2pr-e2}
\mu\Big( \bigcap_{i\in F} A_i\Big) \meg \theta^k.
\end{equation}
\end{lem}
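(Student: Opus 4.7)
The plan is to average $\mu(A_{i_1}\cap\cdots\cap A_{i_k})$ over all ordered $k$-tuples $(i_1,\dots,i_k)\in[N]^k$ (allowing repetitions), use Jensen's inequality to get a uniform lower bound, and then show that the contribution from tuples with repeated entries is negligible for $N\meg \Sigma(\theta,\ee,k)$. An averaging/pigeonhole step then extracts the desired $F$ of size $k$.

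More concretely, I would introduce the counting random variable $S=\sum_{i=1}^{N}\mathbf{1}_{A_i}$ and observe that
\[
\ave\!\left[\Big(\frac{S}{N}\Big)^{\!k}\right]
= \frac{1}{N^k}\sum_{(i_1,\dots,i_k)\in[N]^k}\mu(A_{i_1}\cap\cdots\cap A_{i_k}).
\]
Since $\ave[S]=\sum_i\mu(A_i)\meg N\ee$ and $x\mapsto x^k$ is convex on $[0,\infty)$, Jensen's inequality gives
\[
\frac{1}{N^k}\sum_{(i_1,\dots,i_k)\in[N]^k}\mu(A_{i_1}\cap\cdots\cap A_{i_k})\meg \ee^k.
\]

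Next, split the sum on the left according to whether $(i_1,\dots,i_k)$ has distinct entries or not. The number of ordered $k$-tuples with at least one repeated coordinate is at most $\binom{k}{2}N^{k-1}$ by a union bound over the $\binom{k}{2}$ pairs of positions that can coincide, and each term contributes at most $1$ to the sum. On the other hand, the sum over tuples with distinct entries equals
\[
k!\sum_{F\in [N]^{(k)}}\mu\Big(\bigcap_{i\in F}A_i\Big)
= k!\binom{N}{k}\;\ave_{F\in [N]^{(k)}}\mu\Big(\bigcap_{i\in F}A_i\Big).
\]

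Now assume, towards a contradiction, that $\mu(\bigcap_{i\in F}A_i)<\theta^k$ for every $k$-subset $F\subseteq[N]$. Then the distinct-tuple contribution is strictly less than $k!\binom{N}{k}\theta^k\mik N^k\theta^k$, so combining with the trivial repetition estimate,
\[
N^k\ee^k \;<\; N^k\theta^k+\binom{k}{2}N^{k-1},
\]
which forces $N<\dfrac{k(k-1)}{2(\ee^k-\theta^k)}$, contradicting the hypothesis $N\meg\Sigma(\theta,\ee,k)$. Hence some $F$ of cardinality $k$ satisfies $\mu(\bigcap_{i\in F}A_i)\meg \theta^k$, as required.

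There is no real obstacle here; the only point that requires a little care is the bookkeeping of the repetition term, where one must resist the temptation to invoke a sharper (but unnecessary) Stirling-number identity for $N^k-N(N-1)\cdots(N-k+1)$ and instead use the clean union bound $\binom{k}{2}N^{k-1}$, since the latter exactly matches the numerator in the definition of $\Sigma(\theta,\ee,k)$.
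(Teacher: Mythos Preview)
Your proof is correct and follows essentially the same route as the paper: both apply Jensen's inequality to $\big(\sum_i \mathbf{1}_{A_i}\big)^k$, split the resulting sum over $[N]^k$ into injective and non-injective tuples, and bound the non-injective contribution by $\binom{k}{2}N^{k-1}$. The only cosmetic difference is that the paper concludes by directly showing that the average over injective tuples is at least $\theta^k$, whereas you phrase the same computation as a contradiction.
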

\begin{proof}
Let $\mathcal{A}$ be the set of all functions $\sigma:[k]\to [N]$ and $\mathcal{B}=\{\sigma\in\mathcal{A}:\sigma \text{ is 1-1}\}$. 
Notice that
\begin{equation} \label{2pr-e3}
|\mathcal{A}\setminus\mathcal{B}|\mik \frac{k(k-1)}{2} N^{k-1}.
\end{equation}
By our assumptions and Jensen's inequality, we see that
\begin{eqnarray} \label{2pr-e4}
\ee^k N^k & \mik & \Big( \int \sum_{i=1}^{N} \mathbf{1}_{A_i} d\mu \Big)^k \mik \int\Big(\sum_{i=1}^{N} 
\mathbf{1}_{A_i}\Big)^k d\mu \\
& = & \int \sum_{\sigma\in\mathcal{A}} \prod_{i=1}^{k} \mathbf{1}_{A_{\sigma(i)}} d\mu = 
\sum_{\sigma\in\mathcal{A}} \mu\Big( \bigcap_{i=1}^{k} A_{\sigma(i)}\Big) \nonumber \\
& = & \sum_{\sigma\in\mathcal{A}\setminus \mathcal{B}} \mu\Big( \bigcap_{i=1}^{k} A_{\sigma(i)}\Big) +
\sum_{\sigma\in\mathcal{B}} \mu\Big( \bigcap_{i=1}^{k} A_{\sigma(i)}\Big) \nonumber \\
& \stackrel{(\ref{2pr-e3})}{\mik} & \frac{k(k-1)}{2} N^{k-1} + \sum_{\sigma\in\mathcal{B}}
\mu\Big( \bigcap_{i=1}^{k} A_{\sigma(i)}\Big). \nonumber
\end{eqnarray}
Since $N\meg \Sigma(\theta,\ee,k)$ we get
\begin{equation} \label{2pr-e5}
\ave_{\sigma\in\mathcal{B}} \ \mu \Big(\bigcap_{i=1}^k A_{\sigma(i)}\Big) \meg 
\frac{1}{N^k} \sum_{\sigma\in\mathcal{B}} \mu \Big(\bigcap_{i=1}^k A_{\sigma(i)}\Big)
\stackrel{(\ref{2pr-e4})}{\meg} \ee^k- \frac{1}{N}\cdot \frac{k(k-1)}{2}\meg \theta^k
\end{equation}
and the proof is completed.
\end{proof}
We will also need three elementary variants of Markov's inequality. We isolate them, below, for the convenience of the reader.
\begin{fact} \label{markov-f1}
Let $0<\alpha\mik 1$, $N\in\nn$ with $N\meg 1$ and $a_1,...,a_N$ in $[0,1]$. Assume that $\mathbb{E}_{i\in [N]} a_i \meg \alpha$. 
Then for every $0<\gamma<\alpha$ we have $|\{ i\in [N]: a_i\meg \alpha-\gamma\}| \meg \gamma N$.
\end{fact}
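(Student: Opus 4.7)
The claim is a one-line Markov-type bound, so the plan is very short. Set $B=\{i\in[N]:a_i\meg\alpha-\gamma\}$, let $m=|B|$, and split the sum defining the average into contributions from $B$ and from its complement.

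On $B$ use the trivial upper bound $a_i\mik 1$, and on $[N]\setminus B$ use the defining inequality $a_i<\alpha-\gamma$. Combining with the hypothesis $\sum_{i\in[N]}a_i\meg \alpha N$ gives
\begin{equation*}
\alpha N \;\mik\; \sum_{i\in B} a_i + \sum_{i\in [N]\setminus B} a_i \;\mik\; m + (\alpha-\gamma)(N-m).
\end{equation*}
Rearranging this single inequality yields $\gamma N\mik m(1-\alpha+\gamma)$. Since $0<\gamma<\alpha\mik 1$ we have $1-\alpha+\gamma\mik 1$, and therefore $m\meg\gamma N$, as required.

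There is no real obstacle here: the argument is just the standard averaging/Markov trick, and the only step worth flagging is the use of $1-\alpha+\gamma\mik 1$ at the end, which is precisely where the hypothesis $\gamma<\alpha$ is consumed (otherwise one would only get the weaker bound $m\meg\gamma N/(1-\alpha+\gamma)$, which is in fact also true but not what is stated).
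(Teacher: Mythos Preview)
Your proof is correct; it is the standard Markov/averaging argument. The paper itself does not supply a proof of this fact (it is stated without proof as an ``elementary variant of Markov's inequality''), so there is nothing to compare against.
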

\begin{fact} \label{markov-f2}
Let $0<\alpha\mik 1$, $N\in\nn$ with $N\meg 1$ and $a_1,...,a_N$ in $[0,1]$ such that $\mathbb{E}_{i\in [N]} a_i \meg \alpha$.
Also let $\gamma>0$ and assume that $|\{ i\in [N]: a_i\meg \alpha+\gamma^2\}| \mik \gamma^3 N$. Then 
$|\{ i\in [N]: a_i\meg \alpha-\gamma \}|\meg (1-\gamma)N$.
\end{fact}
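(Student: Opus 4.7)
The plan is to argue by contradiction via a three-way split of $[N]$ according to the value of $a_i$. First I would suppose toward contradiction that $B := \{i \in [N] : a_i < \alpha - \gamma\}$ has $|B| > \gamma N$, and introduce $M := \{i \in [N] : a_i \meg \alpha + \gamma^2\}$ (for which the hypothesis gives $|M| \mik \gamma^3 N$) and $G := [N] \setminus B \setminus M$. Bounding $a_i$ pointwise on each piece — $a_i < \alpha - \gamma$ on $B$, $a_i < \alpha + \gamma^2$ on $G$, and $a_i \mik 1$ on $M$ — and then substituting $|G| = N - |B| - |M|$, a routine rearrangement yields
\begin{equation*}
\sum_{i \in [N]} a_i < \alpha N + \gamma^2 N - (\gamma + \gamma^2)|B| + (1 - \alpha - \gamma^2)|M|.
\end{equation*}

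The next step is to use the two size hypotheses separately. The strict inequality $|B| > \gamma N$ forces $\gamma^2 N - (\gamma + \gamma^2)|B| < -\gamma^3 N$, while $|M| \mik \gamma^3 N$ together with the trivial bound $1 - \alpha - \gamma^2 \mik 1$ yields $(1 - \alpha - \gamma^2)|M| \mik \gamma^3 N$. Adding the two estimates, and keeping the inequality strict by using $\alpha > 0$, gives $\sum_{i \in [N]} a_i < \alpha N$, which contradicts the hypothesis $\ave_{i \in [N]} a_i \meg \alpha$.

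I do not foresee a real obstacle — the whole argument is bookkeeping around a suitably chosen three-way partition. The one mild point worth attention is that the coefficient $1 - \alpha - \gamma^2$ of $|M|$ need not be nonnegative; when it is negative one simply uses $(1 - \alpha - \gamma^2)|M| \mik 0$ in place of the above estimate, which only strengthens the contradiction. The ``high'' set $M$ is what makes the upgrade from Fact \ref{markov-f1} possible: without the hypothesis $|M| \mik \gamma^3 N$ one could not afford to treat the values on $M$ by the crude bound $a_i \mik 1$, and the $|B|$-surplus $-\gamma^3 N$ would be swamped.
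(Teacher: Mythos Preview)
Your argument is correct. The three-way split $B\cup G\cup M$ with the pointwise bounds you indicate leads exactly to
\[
\sum_{i\in[N]} a_i < \alpha N + \gamma^2 N - (\gamma+\gamma^2)|B| + (1-\alpha-\gamma^2)|M|,
\]
and the two size hypotheses then force the right-hand side below $\alpha N$, as you show. One small remark: the strictness of the final inequality already follows from the strict estimate $\gamma^2 N - (\gamma+\gamma^2)|B| < -\gamma^3 N$ alone, so the appeal to $\alpha>0$ is not actually needed (though it does no harm).

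As for comparison with the paper: there is nothing to compare. The paper states Fact~\ref{markov-f2} as one of ``three elementary variants of Markov's inequality'' isolated ``for the convenience of the reader'' and gives no proof. Your write-up is therefore a genuine addition rather than a reconstruction.
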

\begin{fact} \label{markov-f3}
Let $N\in\nn$ with $N\meg 1$ and $a_1,...,a_N$ in $[0,+\infty)$. Also let $\theta,\lambda>0$ and assume that
$\mathbb{E}_{i\in [N]} a_i\mik\theta$. Then  $|\{ i\in [N]: a_i\mik \theta\lambda^{-1}\}|\meg (1-\lambda)N$.
\end{fact}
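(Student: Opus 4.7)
The plan is to reduce this statement to a single application of Markov's inequality. Let $B=\{i\in[N]:a_i>\theta\lambda^{-1}\}$ denote the set of ``bad'' indices; since the complement of $B$ in $[N]$ is exactly $\{i\in[N]:a_i\mik\theta\lambda^{-1}\}$, the conclusion amounts to showing $|B|\mik\lambda N$.

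Using nonnegativity of the $a_i$, I would estimate the sum from below by restricting to $B$ and bounding each term by the threshold:
\[
\theta N \meg \sum_{i=1}^N a_i \meg \sum_{i\in B} a_i \meg |B|\cdot \theta\lambda^{-1}.
\]
Dividing by $\theta\lambda^{-1}>0$ gives $|B|\mik\lambda N$, hence the complement has cardinality at least $(1-\lambda)N$, as required.

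There is really no obstacle: the statement is a verbatim rewriting of Markov's inequality for a finite uniform measure on $[N]$, and the preceding lemma (Lemma \ref{correlation}) together with Facts \ref{markov-f1} and \ref{markov-f2} are of the same elementary flavor. The only mild care needed is to choose the strict inequality $a_i>\theta\lambda^{-1}$ for the definition of $B$ so that the complement matches the weak inequality appearing in the statement.
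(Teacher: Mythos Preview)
Your proof is correct; it is precisely the standard Markov-inequality argument. The paper does not actually prove this fact---it is stated without proof as one of three ``elementary variants of Markov's inequality'' isolated for the reader's convenience---so your argument is exactly what the authors had in mind.
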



\section{Proof of Theorem \ref{it1}}

\numberwithin{equation}{section}

This section is devoted to the proof of Theorem \ref{it1} stated in the introduction. It is organized as follows. 
In \S 3.1 we define certain numerical parameters. In \S 3.2 we give the proof of Proposition \ref{ip2} while
 in \S 3.3 we present some of its consequences. As we have already mentioned, Proposition \ref{ip2} is the main
``pigeonhole principle" in the proof of Theorem \ref{it1}. It is used in \S 3.4 where we show that we can always
pass to a vector strong subtree $\bfcr$ for which we have significant control for the events indexed by the level
product of every initial subtree of $\bfcr$. The proof of Theorem \ref{it1} is completed in \S 3.5. Finally, 
in \S 3.6 we give a combinatorial application.

\subsection{Defining certain numerical parameters}

Let $d\in\nn$ with $d\meg 1$ and $b_1,...,b_d\in\nn$ with $b_i\meg 2$ for every $i\in [d]$. Also let $0<\ee\mik 1$. We set
\begin{equation} \label{3e1}
\mathrm{Cor}(b_1,...,b_d|\ee)=\Sigma\big( \ee/4,\ee/2, \udhl(b_1,...,b_d|2,\ee/2)\big)
\end{equation}
where $\Sigma(\theta,\ee,k)$ is defined in (\ref{2pr-e1}). Next we set
\begin{equation} \label{3e3}
Q(b_1,...,b_d|\ee)= \sum_{m=0}^{\mathrm{Cor}(b_1,...,b_d|\ee)-2}
\frac{\big( \prod_{i=1}^d b_i^{b_i} \big)^{m+1} -\big( \prod_{i=1}^d b_i \big)^{m+1}}{\prod_{i=1}^d b_i^{b_i} -\prod_{i=1}^d b_i}
\end{equation}
and we define
\begin{equation} \label{3e4}
\xi(b_1,...,b_d|\ee)= \frac{\big(\frac{\ee}{4}\big)^{\udhl(b_1,...,b_d|2,\ee/2)}}{Q(b_1,...,b_d|\ee)}.
\end{equation}
Recursively we define a sequence of positive reals by the rule
\begin{equation} \label{3e5}
\left\{ \begin{array} {l} \xi_1(b_1,...,b_d|\ee)=\ee, \\ \xi_{k+1}(b_1,...,b_d|\ee)=
\xi\big(b_1,...,b_d|\xi_k(b_1,...,b_d|\ee)\big) \end{array}  \right.
\end{equation}
and for every integer $n\meg 1$ we set
\begin{equation} \label{3e6}
c(b_1,...,b_d|n,\ee)= \xi_{d(2n-1)}(b_1,...,b_d|\ee).
\end{equation}

\subsection{Proof of Proposition \ref{ip2}}

Clearly we may assume that
\[ h(\bfct)=\mathrm{Cor}(b_1,...,b_d|\ee).\]
For every $n\in\{0,...,\mathrm{Cor}(b_1,...,b_d|\ee)-1\}$ we select $\Omega_n\in\Sigma$ with $\mu(\Omega_n)\meg\ee/2$
such that for every $\omega\in\Omega_n$ we have
\begin{equation} \label{3e7}
|\{\bft\in\otimes\bfct(n): \omega\in A_{\bft}\}|\meg (\ee/2) |\!\otimes\bfct(n)|.
\end{equation}
By (\ref{3e1}) and Lemma \ref{correlation}, there exists a subset $L$ of $\{0,...,\mathrm{Cor}(b_1,...,b_d|\ee)-1\}$ with
\begin{equation} \label{3e8}
|L|= \udhl(b_1,...,b_d|2,\ee/2)
\end{equation}
such that, setting $G=\bigcap_{n\in L} \Omega_n$, we have
\begin{equation} \label{3e10}
\mu(G)\meg \Big(\frac{\ee}{4}\Big)^{\udhl(b_1,...,b_d|2,\ee/2)}.
\end{equation}
Let $\omega\in G$ be arbitrary and set $D_\omega=\{\bft\in\otimes\bfct: \omega\in A_{\bft}\}$. By (\ref{3e7}), 
for every $n\in L$ we have $|D_\omega\cap \otimes\bfct(n)|\meg (\ee/2) |\!\otimes\bfct(n)|$. Hence, by (\ref{3e8})
and Theorem \ref{uniform dhl}, there exists $\bfcf_\omega\in\strong_2(\bfct)$ with $\otimes\bfcf_\omega\subseteq D_{\omega}$.
Now observe that, by Fact \ref{f21} and (\ref{3e3}), we have $|\strong_2(\bfct)|=Q(b_1,...,b_d|\ee)$. Thus, invoking (\ref{3e4})
and (\ref{3e10}), we see that there exist $\bfcf\in\strong_2(\bfct)$ and $G'\in\Sigma$ with $\mu(G')\meg\xi(b_1,...,b_d|\ee)$
such that $G'\subseteq G$ and $\bfcf_\omega=\bfcf$ for every $\omega\in G'$. Therefore,
\begin{equation} \label{3e13}
\mu\Big( \bigcap_{\bft\in\otimes\bfcf} A_{\bft}\Big) \meg \mu(G') \meg \xi(b_1,...,b_d|\ee)
\end{equation}
and the proof is completed.

\subsection{Consequences}

Proposition \ref{ip2} will be used in the following form.
\begin{cor} \label{3c1}
Let $d\in\nn$ with $d\meg 1$ and $b_1,...,b_d\in\nn$ with $b_i\meg 2$ for all $i\in [d]$. Also let $k\in\nn$ and $0<\ee\mik 1$.
Assume that $\bfcs=(S_1,...,S_d)$ is a finite vector homogeneous tree with $b_{\bfcs}=(b_1,...,b_d)$ and
\begin{equation} \label{3e14}
h(\bfcs)\meg (k+1)+\mathrm{Cor}(b_1,...,b_d|\ee)
\end{equation}
and $\{A_\bfs:\bfs\in\otimes\bfcs\}$ is a family of measurable events in a probability space $(\Omega,\Sigma,\mu)$
such that for every $\bfcp\in\strong^k_{k+2}(\bfcs)$ we have
\begin{equation} \label{3e15}
\mu\Big( \bigcap_{\bfs\in\otimes\bfcp} A_{\bfs}\Big)\meg \ee.
\end{equation}
Then there exists $\bfcg\in\strong^k_{k+3}(\bfcs)$ such that
\begin{equation} \label{3e15new}
\mu\Big( \bigcap_{\bfs\in\otimes\bfcg} A_{\bfs}\Big)\meg \xi(b_1,...,b_d|\ee).
\end{equation}
\end{cor}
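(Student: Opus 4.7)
The plan is to reduce the statement to Proposition \ref{ip2} applied to a single ``averaged'' family of events living on a reference vector homogeneous tree.

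Let $B=\bigcap_{\bfu\in\otimes\bfcs\upharpoonright k} A_{\bfu}$, so that for any $\bfcp\in\strong^k_{k+2}(\bfcs)$ (and analogously for $\bfcg\in\strong^k_{k+3}(\bfcs)$) we have $\bigcap_{\bfs\in\otimes\bfcp}A_{\bfs}=B\cap\bigcap_{\bfs\in\otimes\bfcp(k+1)}A_{\bfs}$; this isolates the ``free'' contribution coming from levels above $k$. For every $\bft\in\otimes\bfcs(k+1)$ let $\bfct_{\bft}$ be the vector subtree of $\bfcs$ consisting of $\bft$ and its descendants; each such $\bfct_{\bft}$ is a finite vector homogeneous tree with branching numbers $(b_1,\ldots,b_d)$ and height exactly $h(\bfcs)-(k+1)\meg \mathrm{Cor}(b_1,\ldots,b_d|\ee)$. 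Fix one of them as a reference $\bfct^*$ and, for each $\bft$, let $\Phi_{\bft}\colon\otimes\bfct^*\to\otimes\bfct_{\bft}$ denote the vector canonical isomorphism.

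For $\bfs\in\otimes\bfct^*$ set $\tilde A_{\bfs}=B\cap\bigcap_{\bft\in\otimes\bfcs(k+1)} A_{\Phi_{\bft}(\bfs)}$, and let $\bfcp^{(\bfs)}$ be the vector subtree of $\bfcs$ obtained by adjoining the top level $\{\Phi_{\bft}(\bfs):\bft\in\otimes\bfcs(k+1)\}$ to $\bfcs\upharpoonright k$. Because $\Phi_{\bft}$ preserves levels and maps the root of $\bfct^*$ to $\bft$, a routine check shows $\bfcp^{(\bfs)}\in\strong^k_{k+2}(\bfcs)$: for each $\bfu\in\otimes\bfcs(k)$ and each $t\in\immsuc_{\bfcs}(\bfu)$, the unique extension in $\bfcp^{(\bfs)}(k+1)$ above $t$ is $\Phi_{t}(\bfs)$. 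The hypothesis therefore gives $\mu(\tilde A_{\bfs})\meg\ee$ for every $\bfs\in\otimes\bfct^*$, and since $h(\bfct^*)\meg\mathrm{Cor}(b_1,\ldots,b_d|\ee)$, Proposition \ref{ip2} produces $\bfcf\in\strong_2(\bfct^*)$ with $\mu\bigl(\bigcap_{\bfs\in\otimes\bfcf}\tilde A_{\bfs}\bigr)\meg\xi(b_1,\ldots,b_d|\ee)$.

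To conclude, lift $\bfcf$ to an element $\bfcg\in\strong^k_{k+3}(\bfcs)$ by declaring $\bfcg\upharpoonright k=\bfcs\upharpoonright k$ and placing the copy $\Phi_{\bft}(\bfcf)$ above each $\bft\in\otimes\bfcs(k+1)$; because the \emph{same} $\bfcf$ is used for every $\bft$, the two new levels of $\bfcg$ lie in common levels of $\bfcs$ and the strong-subtree axioms are verified exactly as for $\bfcp^{(\bfs)}$. The top portion of $\otimes\bfcg$ decomposes disjointly as the union of the images $\Phi_{\bft}(\otimes\bfcf)$, so
\[
\bigcap_{\bfs\in\otimes\bfcg}A_{\bfs}=B\cap\bigcap_{\bft}\bigcap_{\bfs\in\otimes\bfcf}A_{\Phi_{\bft}(\bfs)}=\bigcap_{\bfs\in\otimes\bfcf}\tilde A_{\bfs},
\]
which yields (\ref{3e15new}). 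The only genuine obstacle is the combinatorial bookkeeping: verifying that $\bfcp^{(\bfs)}$ and $\bfcg$ really are vector strong subtrees of $\bfcs$; once this is in place, Proposition \ref{ip2} delivers the entire probabilistic content in one step.
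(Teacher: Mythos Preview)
Your proof is correct and follows essentially the same strategy as the paper: both reduce to Proposition~\ref{ip2} by fixing a reference tree (you pick one of the $\bfct_{\bft}$, the paper uses the standard $(b_1^{<h},\ldots,b_d^{<h})$), pulling back via vector canonical isomorphisms to define the averaged events $\tilde A_{\bfs}$ (the paper's $B_{\bfu}$), and then pushing the resulting $\bfcf\in\strong_2$ forward to an element of $\strong^k_{k+3}(\bfcs)$. The paper isolates the coherence property of the coordinate canonical isomorphisms and packages the verification that the lift is a vector strong subtree into an auxiliary fact (Fact~\ref{3f2}), whereas you carry out the same check inline; apart from this organizational difference the two arguments are identical.
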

\begin{proof}
We will reduce the proof to Proposition \ref{ip2} using the notion of vector canonical isomorphism. 
To this end we need, first, to do some preparatory work. We set $h=h(\bfcs)-(k+1)$ and $\bfcb=(b_1^{<h},..., b_d^{<h})$.

Let $\bfs\in\otimes \bfcs(k+1)$ be arbitrary and write $\suc_{\bfcs}(\bfs)=(S_1^\bfs,...,S_d^\bfs)$. For every
$i\in [d]$ the finite homogeneous trees $b_i^{<h}$ and $S_i^\bfs$ have the same branching number and the same height.
Therefore, as we described in \S 2.5, we may consider the canonical isomorphism $\ci_i^\bfs:b_i^{<h}\to S_i^\bfs$. 
The same remarks apply to the finite vector homogeneous trees $\bfcb$ and $\suc_{\bfcs}(\bfs)$. Thus we may also
consider the vector canonical isomorphism  $\bfci_\bfs:\otimes\bfcb\to\otimes\suc_{\bfcs}(\bfs)$ given by the family
of maps $\big\{\ci_i^\bfs:i\in [d]\big\}$ via formula (\ref{e213}). Notice that for every $\bfs,\bft\in\otimes\bfcs(k+1)$
and every $i\in[d]$ if $S_i^\bfs=S_i^\bft$ (that is, if the finite sequences $\bfs$ and $\bft$ agree on the $i$-th coordinate),
then the maps $\ci_i^\bfs$ and $\ci_i^\bft$ are identical. This coherence property yields the following fact.
\begin{fact} \label{3f2}
For every $\ell\in \{1,...,h\}$ and every $\bfcu\in\strong_{\ell}(\bfcb)$ there exists a unique
$\bfcg_{\bfcu}\in\strong^k_{(k+1)+\ell}(\bfcs)$ such that
\begin{equation} \label{3e16}
\otimes\bfcg_{\bfcu}= (\otimes\bfcs\upharpoonright k) \cup \{ \bfci_{\bfs}(\bfu): \bfs\in\otimes\bfcs(k+1) 
\text{ and } \bfu\in\otimes\bfcu\}.
\end{equation}
\end{fact}
After this preliminary discussion we are ready to proceed to the proof. For every $\bfu\in\otimes\bfcb$ we set
\begin{equation} \label{3e17}
B_{\bfu}=\bigcap_{\bft\in\otimes\bfcs\upharpoonright k} A_\bft \cap \bigcap_{\bfs\in\otimes\bfcs(k+1)} A_{\bfci_{\bfs}(\bfu)}.
\end{equation}
First we claim that $\mu(B_{\bfu})\meg\ee$ for every $\bfu\in\otimes\bfcb$. Indeed, let $\bfu\in\otimes\bfcb$
be arbitrary and notice that $\bfu\in\strong_1(\bfcb)$. Hence, by Fact \ref{3f2} and the definition of the set
$B_{\bfu}$ in (\ref{3e17}) above, there exists $\bfcg_{\bfu}\in\strong^k_{k+2}(\bfcs)$ such that
\begin{equation} \label{3e18}
B_{\bfu}= \bigcap_{\bft\in\otimes \bfcg_{\bfu}} A_{\bft}
\end{equation}
and the claim follows from our hypotheses. Next we observe that 
\begin{equation} \label{3e19}
h(\bfcb)=h(\bfcs)-(k+1) \stackrel{(\ref{3e14})}{\meg} \mathrm{Cor}(b_1,...,b_d|\ee).
\end{equation}
Therefore, by Proposition \ref{ip2}, there exists $\bfcf\in\strong_2(\bfcb)$ such that
\begin{equation} \label{3e20}
\mu \Big( \bigcap_{\bfu\in\otimes\bfcf} B_{\bfu}\Big) \meg \xi(b_1,...,b_d|\ee).
\end{equation}
Invoking Fact \ref{3f2} and (\ref{3e17}) again, there exists $\bfcg_{\bfcf}\in\strong^k_{k+3}(\bfcs)$ such that
\begin{equation} \label{3e21}
\bigcap_{\bfu\in\otimes\bfcf} B_{\bfu}= \bigcap_{\bft\in\otimes \bfcg_{\bfcf}} A_{\bft}.
\end{equation}
Combining (\ref{3e20}) and (\ref{3e21}), the result follows.
\end{proof}

\subsection{Control of initial subtrees}

This subsection is devoted to the proof of the following lemma which is the last step towards the proof of Theorem \ref{it1}.
\begin{lem} \label{3l3}
Let $d\in\nn$ with $d\meg 1$ and $b_1,...,b_d\in\nn$ with $b_i\meg 2$ for all $i\in [d]$. Also let $0<\ee\mik 1$.
If $\bfct=(T_1,..., T_d)$ is a vector homogeneous tree with $b_{\bfct}=(b_1,...,b_d)$ and $\{A_{\bft}:\bft\in\otimes\bfct\}$
is a family of measurable events in a probability space $(\Omega,\Sigma,\mu)$ satisfying $\mu(A_\bft)\meg\ee$ for every
$\bft\in\otimes\bfct$, then there exists a vector strong subtree $\bfcr$ of $\bfct$ of infinite height such that for
every $k\in\nn$ we have
\begin{equation} \label{3e22}
\mu\Big( \bigcap_{\bfr\in\otimes \bfcr\upharpoonright k} A_\bfr\Big) \meg \xi_{k+1}(b_1,...,b_d|\ee)
\end{equation}
where $\xi_{k+1}(b_1,...,b_d|\ee)$ is defined in \eqref{3e5}.
\end{lem}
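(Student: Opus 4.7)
The plan is to construct $\bfcr$ by a diagonal fusion argument, building a nested sequence $\bfct=\bfcu^{(0)}\supseteq\bfcu^{(1)}\supseteq\bfcu^{(2)}\supseteq\cdots$ of infinite vector strong subtrees of $\bfct$ with $\bfcu^{(k+1)}\in\strong^{k-1}_{\infty}(\bfcu^{(k)})$ for each $k\meg 0$ (under the convention $\strong^{-1}_{\infty}:=\strong_{\infty}$), and then reading off $\bfcr$ diagonally by $\bfcr(k):=\bfcu^{(k+1)}(k)$. Because $\bfcu^{(j)}\in\strong^{j-1}_{\infty}(\bfcu^{(j-1)})$ at each stage, one has $\bfcu^{(j)}(k)=\bfcr(k)$ for every $j\meg k+1$, so the diagonal is coherent and $\bfcr$ is a well-defined infinite vector strong subtree of $\bfct$.

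Throughout, I will maintain the invariant that for every $k\meg 0$ and every $\bfcp\in\strong^{k-1}_{k+1}(\bfcu^{(k)})$ one has $\mu(\bigcap_{\bfs\in\otimes\bfcp} A_{\bfs})\meg \xi_{k+1}(b_1,\ldots,b_d|\ee)$. This invariant immediately delivers the conclusion of the lemma: by the nesting, $\bfcr\upharpoonright k=\bfcu^{(k+1)}\upharpoonright k$ lies in $\strong^{k-1}_{k+1}(\bfcu^{(k+1)})\subseteq\strong^{k-1}_{k+1}(\bfcu^{(k)})$, and the invariant at level $k$ gives the required bound. The base case $k=0$ is precisely the hypothesis $\mu(A_{\bft})\meg\ee=\xi_1$ for every $\bft\in\otimes\bfct$.

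For the inductive step I apply the $\strong^{\ell}_{\infty}$ form of Milliken's theorem (Theorem \ref{infinite milliken}) to the $2$-coloring of $\strong^{k-1}_{k+2}(\bfcu^{(k)})$ according to whether the intersection mass is $\meg\xi_{k+2}$ or not, producing $\bfcu^{(k+1)}\in\strong^{k-1}_{\infty}(\bfcu^{(k)})$ monochromatic on $\strong^{k-1}_{k+2}(\bfcu^{(k+1)})$. To exclude the ``$<$'' color I exhibit a contradicting witness: for $k=0$, Proposition \ref{ip2} applied to a sufficiently tall initial subtree of $\bfcu^{(1)}$ yields some $\bfcf\in\strong_2(\bfcu^{(1)})$ with mass $\meg\xi(b_1,\ldots,b_d|\xi_1)=\xi_2$; for $k\meg 1$, Corollary \ref{3c1} with parameter $k-1$ and density parameter $\xi_{k+1}$, applied to a sufficiently tall initial subtree $\bfcu^{(k+1)}\upharpoonright M$, produces some $\bfcg\in\strong^{k-1}_{k+2}(\bfcu^{(k+1)})$ with mass $\meg\xi(b_1,\ldots,b_d|\xi_{k+1})=\xi_{k+2}$. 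The hypothesis of Corollary \ref{3c1} is supplied by the inductive invariant at level $k$, since any $\bfcp\in\strong^{k-1}_{k+1}(\bfcu^{(k+1)}\upharpoonright M)$ agrees with $\bfcu^{(k)}$ on the first $k$ levels by the nesting and hence lies in $\strong^{k-1}_{k+1}(\bfcu^{(k)})$. The monochromatic color is therefore ``$\meg$'', and the invariant at level $k+1$ follows from the inclusion $\strong^{k}_{k+2}(\bfcu^{(k+1)})\subseteq \strong^{k-1}_{k+2}(\bfcu^{(k+1)})$.

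The main technical obstacle will be coordinating the $\strong^{\ell}_{k}$ indices so that all three ingredients mesh. In particular the Milliken partition must be taken on $\strong^{k-1}_{k+2}$, not on the smaller $\strong^{k}_{k+2}$, so that the witness produced by Corollary \ref{3c1}---which only agrees with the ambient tree on the first $k$ levels---genuinely lands inside the set on which Milliken yields monochromaticity; correspondingly the nesting $\bfcu^{(k+1)}\in\strong^{k-1}_{\infty}(\bfcu^{(k)})$ is the weaker relation that must be used, and it is just enough for the diagonal $k\mapsto\bfcu^{(k+1)}(k)$ to define a genuine infinite vector strong subtree of $\bfct$.
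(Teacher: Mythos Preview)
Your argument is correct and is essentially the paper's own proof, re-indexed: your $\bfcu^{(k+1)}$ is the paper's $\bfcr_k$, your invariant on $\strong^{k-1}_{k+1}(\bfcu^{(k)})$ is the paper's condition (C2) on $\strong^{k}_{k+2}(\bfcr_k)$ shifted by one, and both proofs run Milliken on the ``height~$+1$'' class and use Proposition~\ref{ip2} (for the base) or Corollary~\ref{3c1} (for the step) to force the good colour. One small slip: the sentence ``$\bfcu^{(j)}\in\strong^{j-1}_{\infty}(\bfcu^{(j-1)})$'' should read $\strong^{j-2}_{\infty}$ to be consistent with your stated nesting $\bfcu^{(k+1)}\in\strong^{k-1}_{\infty}(\bfcu^{(k)})$, but this does not affect the diagonal-fusion argument, which goes through as you describe.
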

\begin{proof}
Recursively, we shall construct a sequence $(\bfcr_k)$ of vector strong subtrees of $\bfct$ of infinite height
such that for every $k\in\nn$ the following conditions are satisfied.
\begin{enumerate}
\item[(C1)] We have $\bfcr_{k+1}\upharpoonright k=\bfcr_k\upharpoonright k$.
\item[(C2)] For every $\bfcg\in\strong^k_{k+2}(\bfcr_k)$ we have
\begin{equation} \label{3e23}
\mu\Big( \bigcap_{\bft\in\otimes\bfcg} A_{\bft}\Big) \meg \xi_{k+2}(b_1,...,b_d|\ee).
\end{equation}
\end{enumerate}
To select the tree $\bfcr_0$ we argue as follows. Let
\begin{equation} \label{3e23new}
\mathcal{F}=\Big\{ \bfcf\in\strong_2(\bfct): \mu\Big( \bigcap_{\bft\in\otimes\bfcf} A_{\bft}\Big) \meg \xi_2(b_1,...,b_d|\ee)\Big\}.
\end{equation}
Applying Theorem \ref{infinite milliken}, we may find $\bfcs\in\strong_{\infty}(\bfct)$ such that either 
$\strong_2(\bfcs)\subseteq \mathcal{F}$ or $\strong_2(\bfcs)\cap \mathcal{F}=\varnothing$. Since $\mu(A_\bfs)\meg\ee$
for every $\bfs\in\otimes\bfcs$, by Proposition \ref{ip2}, we see that $\strong_2(\bfcs)\cap\mathcal{F}\neq\varnothing$. 
Therefore, $\strong_2(\bfcs)\subseteq \mathcal{F}$. We set ``$\bfcr_0=\bfcs$" and we observe that condition (C2) is satisfied.
Since condition (C1) is meaningless for ``$k=0$", the first step of the recursive selection is completed.

Let $k\in\nn$ and assume that the trees $\bfcr_0,..., \bfcr_k$ have been selected so that conditions (C1) and (C2) are satisfied. We set
\begin{equation} \label{3e24}
\mathcal{G}=\Big\{ \bfcg\in\strong^k_{k+3}(\bfcr_k): \mu\Big( \bigcap_{\bft\in\otimes\bfcg} A_{\bft}\Big) \meg \xi_{k+3}(b_1,...,b_d|\ee)\Big\}.
\end{equation}
Arguing as above and using Theorem \ref{infinite milliken} and Corollary \ref{3c1}, we see that there exists
$\bfcs\in\strong^k_{\infty}(\bfcr_k)$ such that $\strong^k_{k+3}(\bfcs)\subseteq \mathcal{G}$. We set ``$\bfcr_{k+1}=\bfcs$"
and we observe that with this choice conditions (C1) and (C2) are satisfied. The recursive selection is thus completed.

Now let $\bfcr$ be the unique vector strong subtree of $\bfct$ such that $\bfcr\upharpoonright k=\bfcr_k\upharpoonright k$
for every $k\in\nn$. Notice that, by condition (C1), $\bfcr$ is well-defined. Let $k\in\nn$ be arbitrary. If $k=0$, then
$\bfcr\upharpoonright 0$ is just the root $\bfcr(0)$ of $\bfcr$; so in this case (\ref{3e22})  follows from our hypotheses.
If $k\meg 1$, then by condition (C1) we see that $\bfcr\upharpoonright k =\bfcr_k\upharpoonright k\in \strong^{k-1}_{k+1}(\bfcr_{k-1})$. 
Therefore, by condition (C2),
\begin{equation} \label{3e25}
\mu\Big( \bigcap_{\bfr\in\otimes \bfcr\upharpoonright k} A_{\bfr}\Big) \meg \xi_{(k-1)+2}(b_1,...,b_d|\ee)=\xi_{k+1}(b_1,...,b_d|\ee)
\end{equation}
and the proof is completed.
\end{proof}

\subsection{Proof of Theorem \ref{it1}}
Let
\begin{equation} \label{3e26}
\mathcal{C}=\Big\{ \bfcr\in\strong_{\infty}(\bfct): \mu\Big(\bigcap_{\bfr\in \otimes\bfcr\upharpoonright k} A_\bfr\Big)\meg
\xi_{k+1}(b_1,...,b_d|\ee) \text{ for every } k\in\nn \Big\}.
\end{equation}
It is easy to see that $\mathcal{C}$ is a closed subset of $\strong_{\infty}(\bfct)$. By Theorem \ref{infinite milliken}
and Lemma \ref{3l3}, there exists $\bfcs\in\strong_{\infty}(\bfct)$ such that $\strong_{\infty}(\bfcs)\subseteq \mathcal{C}$.
The vector strong subtree $\bfcs$ is the desired one. Indeed, let $n\in\nn$ with $n\meg 1$ and $F$ be an arbitrary subset
of $\otimes\bfcs$ of cardinality $n$. By Proposition \ref{ap-p1}, there exists a vector strong subtree $\bfcg$ of $\bfcs$
of height $d(2n-1)$ such that $F\subseteq\otimes\bfcg$. Observe that there exists $\bfcr\in\strong_{\infty}(\bfcs)$ such that,
setting $k=d(2n-1)-1$, we have that $\bfcg=\bfcr\upharpoonright k$. Since $\bfcr\in\strong_{\infty}(\bfcs)\subseteq\mathcal{C}$,
\begin{equation}
\mu\Big(\bigcap_{\bft\in F} A_\bft\Big) \meg \mu\Big(\bigcap_{\bft\in \otimes\bfcg} A_\bft\Big)\meg
\xi_{d(2n-1)}(b_1,...,b_d|\ee) \stackrel{(\ref{3e6})}{=} c(b_1,..., b_d|n,\ee).
\end{equation}
The proof of Theorem \ref{it1} is completed.

\subsection{A combinatorial application: random colorings of strong subtrees}

An old problem of P. Erd\H{o}s and A. Hajnal \cite[page 115]{EH} asked whether given a family $\{A_{\{n,m\}}:\{n,m\}\in [\nn]^2\}$
of measurable events in a probability space $(\Omega,\Sigma,\mu)$ satisfying $\mu(A_{\{n,m\}})\meg \ee>0$ for every $\{n,m\}\in [\nn]^2$
there exists an infinite subset $L=\{n_0< n_1< ...\}$ of $\nn$ such that the set $\bigcap_{i\in\nn} A_{\{n_i,n_{i+1}\}}$ is nonempty. 
This problem is pointing towards obtaining a ``random" version of the classical Ramsey Theorem \cite{Ra} and it was resolved by
D. H. Fremlin and M. Talagrand in \cite{FT} who showed that there exists a critical threshold: if the underlying probability
space is the unit interval with the Lebesgue measure, then such an infinite subset $L$ can be found if and only if $\ee>1/2$.

One can consider a ``tree" version of the Erd\H{o}s--Hajnal problem where doubletons of $\nn$ are replaced with strong
subtrees of height $2$ of a fixed homogeneous tree $T$ and infinite subsets of $\nn$ with strong subtrees of $T$ of infinite
height. Of course, such a question asks if a ``random" version of Milliken's Theorem \cite{Mi1,Mi2} holds true. Since we can
naturally ``code" doubletons of $\nn$ with elements of $\strong_2(T)$ via their level set, we see that in the ``tree" version
one will also face threshold phenomena. Such threshold phenomena, however, do not occur if we restrict our attention to strong
subtrees with a fixed root.
\begin{cor} \label{erdos-hajnal}
Let $b\in\nn$ with $b\meg 2$ and $0<\ee\mik 1$. Also let $T$ be a homogeneous tree with branching number $b$ and
$\{A_S:S\in\strong_2(T)\}$ be a family of measurable events in a probability space $(\Omega,\Sigma,\mu)$ satisfying
$\mu(A_S)\meg\ee$ for every $S\in \strong_2(T)$. Then there exists a strong subtree $R$ of $T$ of infinite height
such that for every integer $n\meg 1$ and every $S_1,...,S_n\in\strong_2(R)$ with $S_1(0)=...=S_n(0)$ we have
\begin{equation} \label{e1-erdos-hajnal}
\mu\Big( \bigcap_{i=1}^n A_{S_i}\Big) \meg c\big(\underbrace{b,...,b}_{b-\mathrm{times}}|n,\ee\big)
\end{equation}
where the constant in \eqref{e1-erdos-hajnal} is as in Theorem \ref{it1}.
\end{cor}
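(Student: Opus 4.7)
The plan is to derive Corollary~\ref{erdos-hajnal} from Theorem~\ref{it1} by first handling a single common root and then fusing across all possible common roots. For the single-root reduction, fix a strong subtree $R$ of $T$ of infinite height and a node $r \in R$, and let $r_1', \dots, r_b'$ denote the immediate successors of $r$ in $R$. The vector tree $\bfcr_r := (\suc_R(r_1'), \dots, \suc_R(r_b'))$ is a vector homogeneous tree each of whose components has branching number $b$, and the map $(t_1, \dots, t_b) \mapsto S$ with $S(0) = r$ and $S(1) = \{t_1, \dots, t_b\}$ is a bijection between $\otimes\bfcr_r$ and $\{S \in \strong_2(R) : S(0) = r\}$. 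Under this bijection the family $\{A_S\}$ becomes a family of events indexed by $\otimes\bfcr_r$, each of $\mu$-measure at least $\ee$, and Theorem~\ref{it1} applied with $d = b$ and $b_1 = \cdots = b_b = b$ produces a vector strong subtree of $\bfcr_r$ of infinite height on whose level product the desired correlation bound $c(b,\dots,b \mid n, \ee)$ holds for all finite subsets of cardinality $n$.

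Using this reduction we build, by a standard fusion, a decreasing sequence $R^{(0)} \supseteq R^{(1)} \supseteq \cdots$ of strong subtrees of $T$ of infinite height satisfying, for every $k \in \nn$: (i) $R^{(k+1)} \upharpoonright k = R^{(k)} \upharpoonright k$; and (ii) in $R^{(k)}$, the correlation property of the corollary holds for every common root at $R^{(k)}$-level at most $k$. In the inductive step from $k$ to $k+1$, we declare $R^{(k+1)}$-level $k+1$ to be $R^{(k)}$-level $k+1$, enumerate the $b^{k+1}$ nodes at that level as $r^{(1)}, \dots, r^{(b^{k+1})}$, and apply the single-root reduction to each in turn: after each application one restricts the current ambient tree to the level set just produced by Theorem~\ref{it1}, and the next application is performed inside the restricted tree. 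Correlation for already handled common roots is preserved automatically by such a restriction, because a height-$2$ strong subtree with a given root in the restricted tree is also a height-$2$ strong subtree with the same root in the parent tree, so the correlation inequality (a lower bound on intersections of events over a subfamily) transfers downwards.

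Setting $R$ to be the unique strong subtree of $T$ with $R \upharpoonright k = R^{(k)} \upharpoonright k$ for every $k$ (well-defined by (i)), one verifies that $R$ is a strong subtree of each $R^{(k)}$ of infinite height. For any $r \in R$ at $R$-level $k$, $r$ lies at $R^{(k)}$-level $k$, so correlation for $r$ holds in $R^{(k)}$ by (ii); since $\strong_2(R) \subseteq \strong_2(R^{(k)})$, this descends to $R$ and gives the corollary. The main obstacle is the bookkeeping in the inductive step: coordinating the level sets produced by the $b^{k+1}$ sequential applications of Theorem~\ref{it1} into a single strong subtree of infinite height with the required initial segment. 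This is handled by the standard restrict-and-continue device, and all the other verifications amount to the monotonicity of the correlation condition under passage to a strong subtree of the ambient tree.
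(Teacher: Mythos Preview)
Your proposal is correct and takes essentially the same approach as the paper: the bijection between $\otimes\bfcr_r$ and $\strong^0_2(Z)$ for a fixed root, an application of Theorem~\ref{it1} with $d=b$ and $b_1=\cdots=b_b=b$, followed by a fusion over all possible roots. The paper is terser---after setting up the bijection it simply says ``the result follows by Theorem~\ref{it1} and a standard recursive construction''---so your explicit fusion is a reasonable elaboration of what the paper leaves implicit.
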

\begin{proof}
Let $Z\in\strong_{\infty}(T)$ be arbitrary and set $z=Z(0)$. Write the set $\immsuc_Z(z)$ in lexicographical increasing
order as $\{z_1\lex ...\lex z_b\}$ and set
\begin{equation}
\bfcs=\big(\suc_Z(z_1),...,\suc_Z(z_b)\big).
\end{equation}
Notice that there exists a natural isomorphism $\Phi:\otimes\bfcs\to\strong^0_2(Z)$ defined by
$\Phi\big((s_1,...,s_b)\big)=\{z\}\cup\{s_1,...,s_b\}$. Using this observation, the result follows
by Theorem \ref{it1} and a standard recursive construction.
\end{proof}


\section{Two ``averaging" lemmas}

\numberwithin{equation}{section}

This section is devoted to the proof of two ``averaging" lemmas. Both are stated in abstract form and concern
the structure of real-valued functions of two variables. They will be applied in \S 6. Before we proceed to the
details we need, first, to define some auxiliary quantities. Specifically, for every $0<\alpha\mik \beta\mik 1$
and every $0<\varrho\mik 1$ we set
\begin{equation} \label{e61}
\gamma_0=\gamma_0(\alpha,\beta,\varrho)=(\beta+\varrho^2-\alpha)^{1/2},
\end{equation}
\begin{equation} \label{e62}
\gamma_1=\gamma_1(\alpha,\beta,\varrho)=(\gamma_0+\gamma_0^2)^{1/2}
\end{equation}
and
\begin{equation} \label{e63}
\gamma_2=\gamma_2(\alpha,\beta,\varrho)=(\gamma_1+\gamma_1^2)^{1/2}.
\end{equation}
We isolate, for future use, the following elementary properties.
\begin{enumerate}
\item[($\mathcal{P}$1)] We have $\beta+\varrho^2=\alpha+\gamma_0^2=(\alpha-\gamma_0)+\gamma_1^2=(\alpha-\gamma_0-\gamma_1)+\gamma_2^2$.
\item[($\mathcal{P}$2)] We have $0<\varrho\mik \gamma_0 <\gamma_1<\gamma_2$.
\item[($\mathcal{P}$3)] If $\gamma_0\mik(\alpha/4)^4$, then $\gamma_1\mik (2 \gamma_0)^{1/2}$, $\gamma_2\mik 2 \gamma_0^{1/4}$
and $\alpha-\gamma_0-\gamma_1-\gamma_2\meg \alpha/4$.
\end{enumerate}
We are ready to state the first main result of this section.
\begin{lem} \label{l62}
Let $0<\alpha\mik\beta\mik 1$ and $0<\varrho\mik 1$ and define $\gamma_0, \gamma_1$ and $\gamma_2$ as in
\eqref{e61}, \eqref{e62} and \eqref{e63} respectively. Assume that
\begin{equation} \label{e64}
\gamma_0\mik \Big( \frac{\alpha}{4}\Big)^4.
\end{equation}
Assume, moreover, that we are given
\begin{enumerate}
\item[(a)] two nonempty finite sets $\mathcal{S}$ and $\mathcal{W}$,
\item[(b)] an integer $h\meg 1$ and a partition $\{\mathcal{S}_0, ...,\mathcal{S}_{h-1}\}$ of $\mathcal{S}$, and
\item[(c)] a function $f:\mathcal{S}\times\mathcal{W}\to [0,1]$ such that
$\mathbb{E}_{w\in\mathcal{W}}\mathbb{E}_{n<h}\mathbb{E}_{s\in \mathcal{S}_n} f(s,w)\meg \alpha$.
\end{enumerate}
Then, either
\begin{enumerate}
\item[(i)] there exist $w_0\in\mathcal{W}$ and $\mathcal{N}_0\subseteq \{0,...,h-1\}$ with $|\mathcal{N}_0|\meg \varrho^3 h$ such that, setting
\begin{equation} \label{4e1new}
\Delta_0=\{s\in\mathcal{S}:f(s,w_0)\meg\beta+\varrho^2/2\},
\end{equation}
we have $|\Delta_0\cap\mathcal{S}_n|\meg \varrho^3|\mathcal{S}_n|$ for every $n\in\mathcal{N}_0$,  or
\item[(ii)] there exists $\mathcal{W}^*\subseteq\mathcal{W}$ with $|\mathcal{W}^*|\meg (1-\gamma_0)|\mathcal{W}|$
and satisfying the following. For every $w\in\mathcal{W}^*$ there exists $\mathcal{N}_w^*\subseteq \{0,...,h-1\}$ 
with $|\mathcal{N}_w^*|\meg(1-\gamma_1-\varrho^3)h$ such that, setting
\begin{equation} \label{4e2new}
\Delta_w^*=\{s\in\mathcal{S}:f(s,w)\meg \alpha-\gamma_0-\gamma_1-\gamma_2\},
\end{equation}
we have $|\Delta_w^*\cap\mathcal{S}_n|\meg (1-\gamma_2)|\mathcal{S}_n|$ for every $n\in\mathcal{N}_w^*$.
\end{enumerate}
\end{lem}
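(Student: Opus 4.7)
The plan is to argue by contrapositive: assuming clause (i) fails, I will apply the Markov-type estimate Fact \ref{markov-f2} three successive times --- once to the averaging over $\mathcal{W}$, once to the averaging over $\{0,\dots,h-1\}$, and once to the averaging over each fiber $\mathcal{S}_n$ --- with the failure of (i) furnishing in every case the ``few exceptional indices'' hypothesis required by that fact.

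I would start by introducing the auxiliary quantities $g(n,w)=\ave_{s\in\mathcal{S}_n}f(s,w)$ and $G(w)=\ave_{n<h}g(n,w)$, together with, for each pair $(n,w)$, the density $D_n(w)$ within $\mathcal{S}_n$ of the set $\{s\in\mathcal{S}_n:f(s,w)\meg\beta+\varrho^2/2\}$. Splitting the average defining $g(n,w)$ at the threshold $\beta+\varrho^2/2$ would yield the pointwise bound $g(n,w)\mik\beta+\varrho^2/2+D_n(w)$, and I would note that the failure of clause (i) is exactly the assertion
\[
|\{n<h:D_n(w)\meg\varrho^3\}|<\varrho^3 h\qquad\text{for every }w\in\mathcal{W}.\qquad(*)
\]
The standing hypothesis $\gamma_0\mik(\alpha/4)^4$ together with $\varrho\mik\gamma_0$ (immediate from $\beta\meg\alpha$) forces $\varrho\mik 1/4$, hence $2\varrho^3\mik\varrho^2/2$. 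Averaging the pointwise bound and combining with $(*)$ and property $(\mathcal{P}1)$ would then give $G(w)<\beta+\varrho^2/2+2\varrho^3\mik\beta+\varrho^2=\alpha+\gamma_0^2$ for every $w\in\mathcal{W}$, and a first application of Fact \ref{markov-f2} with $\gamma=\gamma_0$ would produce $\mathcal{W}^*=\{w:G(w)\meg\alpha-\gamma_0\}$ of size at least $(1-\gamma_0)|\mathcal{W}|$.

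Fixing $w\in\mathcal{W}^*$, I would apply Fact \ref{markov-f2} to $(g(n,w))_{n<h}$ with $\gamma=\gamma_1$. Property $(\mathcal{P}1)$ identifies the exceptional set as $\{n:g(n,w)\meg\beta+\varrho^2\}$, which by the pointwise bound lies inside $\{n:D_n(w)\meg\varrho^2/2\}\subseteq\{n:D_n(w)\meg\varrho^3\}$ and thus has size less than $\varrho^3 h\mik\gamma_1^3 h$ by $(*)$ and property $(\mathcal{P}2)$. The conclusion yields at least $(1-\gamma_1)h$ indices $n$ with $g(n,w)\meg\alpha-\gamma_0-\gamma_1$; intersecting with the complement of $\{n:D_n(w)\meg\varrho^3\}$ at a cost of at most $\varrho^3 h$ further indices would provide the required $\mathcal{N}_w^*$ of size at least $(1-\gamma_1-\varrho^3)h$ on which additionally $D_n(w)<\varrho^3$. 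For each such $n$, a final application of Fact \ref{markov-f2} to $(f(s,w))_{s\in\mathcal{S}_n}$ with $\gamma=\gamma_2$ would be justified by $|\{s:f(s,w)\meg\beta+\varrho^2\}|\mik D_n(w)|\mathcal{S}_n|<\varrho^3|\mathcal{S}_n|\mik\gamma_2^3|\mathcal{S}_n|$ and would deliver $|\Delta_w^*\cap\mathcal{S}_n|\meg(1-\gamma_2)|\mathcal{S}_n|$, which is precisely clause (ii).

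The main obstacle is the numerical calibration of the first step: the conclusion $G(w)<\alpha+\gamma_0^2$ hinges on the inequality $2\varrho^3\mik\varrho^2/2$, and the hypothesis $\gamma_0\mik(\alpha/4)^4$ is inserted precisely so that $\varrho\mik 1/4$ is available. Once this is secured, the three Markov applications telescope automatically through the identities of property $(\mathcal{P}1)$ and the monotonicity $\varrho\mik\gamma_0\mik\gamma_1\mik\gamma_2$ of property $(\mathcal{P}2)$, with the size losses $\gamma_0$, $\gamma_1+\varrho^3$ and $\gamma_2$ at the three scales accumulating exactly as required by clause (ii).
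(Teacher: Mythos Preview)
Your proof is correct and follows the same skeleton as the paper's --- three nested applications of Fact~\ref{markov-f2}, at the scales $\mathcal{W}$, $\{0,\dots,h-1\}$, and $\mathcal{S}_n$, with the parameters $\gamma_0,\gamma_1,\gamma_2$ calibrated so that property~($\mathcal{P}$1) makes the ``exceptional'' thresholds line up each time.

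The difference lies in the case analysis. The paper argues by splitting into four cases: three exceptional situations (some $w$ has $G(w)\meg\beta+\varrho^2$; some $w$ has many $n$ with $g(n,w)\meg\beta+\varrho^2$; some $w$ has many $n$ with $D_n(w)\meg\varrho^3$), each yielding~(i) directly, and a residual case where the negations (H1)--(H3) of all three feed into the Markov chain. You instead observe that the negation of~(i) is \emph{exactly} the third hypothesis~$(*)$, and then derive the other two from it via the pointwise bound $g(n,w)\mik\beta+\varrho^2/2+D_n(w)$: averaging over $n$ gives $G(w)<\beta+\varrho^2$, and the same bound shows $\{n:g(n,w)\meg\beta+\varrho^2\}\subseteq\{n:D_n(w)\meg\varrho^3\}$. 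This renders the paper's Cases~1 and~2 redundant, since they are subsumed by Case~3. Your route is therefore a genuine streamlining: one contrapositive assumption in place of three, with the pointwise splitting doing the work that the paper's separate cases do. Both arguments rest on the same numerical constraint $\varrho\mik(\alpha/4)^4\mik 1/4$, which you correctly identify as the hinge for the estimate $2\varrho^3\mik\varrho^2/2$.
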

\begin{proof}
We will consider four cases. The first three cases imply that alternative (i) holds true while the last one yields
alternative (ii). First we need to do some preparatory work. Precisely notice that, by (\ref{e64}), we have 
$\gamma_0\mik 4^{-4}$. Therefore, by property ($\mathcal{P}$2), we see that $\varrho\mik 4^{-4}$. Also, for every 
$w\in\mathcal{W}$ we set
\begin{equation} \label{e66}
\Delta_{w}=\{s\in\mathcal{S}:f(s,w)\meg\beta+\varrho^2/2\},
\end{equation}
\begin{equation} \label{e67}
I_{w}=\{ n<h: \ave_{s\in\mathcal{S}_n} f(s,w)\meg \beta+\varrho^2\}
\end{equation}
and
\begin{equation} \label{e68}
K_{w}=\{ n<h: |\Delta_{w}\cap \mathcal{S}_n|\meg \varrho^3 |\mathcal{S}_n|\}.
\end{equation}
We are now ready to consider cases.
\medskip

\noindent \textsc{Case 1:} \textit{there exists $w_0\in\mathcal{W}$ such that 
$\ave_{n<h}\ave_{s\in\mathcal{S}_n} f(s,w_0)\meg\beta+\varrho^2$}. In this case we set
\begin{equation} \label{4e3new}
\mathcal{N}_0=\{n<h:\ave_{s\in\mathcal{S}_n} f(s,w_0)\meg\beta+3\varrho^2/4\}.
\end{equation}
Recall that $\varrho\mik 4^{-1}$. Hence, by Fact \ref{markov-f1}, we see that $|\mathcal{N}_0|\meg \varrho^3h$.
Next we set $\Delta_0=\Delta_{w_0}$. Invoking Fact \ref{markov-f1} once again and using the definition of 
$\mathcal{N}_0$ we see that $|\Delta_0\cap\mathcal{S}_n|\meg \varrho^3|\mathcal{S}_n|$ for every $n\in\mathcal{N}_0$.
Therefore, this case implies part (i) of the lemma.
\medskip

\noindent \textsc{Case 2:} \textit{there exist  $w_0\in\mathcal{W}$ such that $|I_{w_0}|\meg \varrho^3 h$}.
By Fact \ref{markov-f1}, we see that $|\Delta_{w_0}\cap\mathcal{S}_n| \meg\varrho^3|\mathcal{S}_n|$ for every
$n\in I_{w_0}$. We set $\Delta_0=\Delta_{w_0}$ and $\mathcal{N}_0= I_{w_0}$ and we observe that with these 
choices the first part of the lemma is satisfied.
\medskip

\noindent \textsc{Case 3:} \textit{there exist $w_0\in\mathcal{W}$ such that $|K_{w_0}|\meg \varrho^3 h$}. 
In this case we set $\Delta_0=\Delta_{w_0}$ and $\mathcal{N}_0= K_{w_0}$. It is easily seen that with these
choices part (i) of the lemma is satisfied.
\medskip

\noindent \textsc{Case 4:} \textit{none of the above cases holds true}. Notice that, in this case, for every
$w\in\mathcal{W}$ we have
\begin{enumerate}
\item[(H1)] $\ave_{n<h}\ave_{s\in\mathcal{S}_n} f(s,w)<\beta+\varrho^2$,
\item[(H2)] $|I_{w}| < \varrho^3 h$ and
\item[(H3)] $|K_{w}| < \varrho^3 h$.
\end{enumerate}
We set $\alpha_0=\alpha-\gamma_0$ and
\begin{equation} \label{e612}
\mathcal{W}^*=\{ w\in\mathcal{W}: \ave_{n<h}\ave_{s\in\mathcal{S}_n} f(s,w)\meg \alpha_0\}.
\end{equation}
\begin{claim} \label{c63}
We have $|\mathcal{W}^*|\meg (1-\gamma_0)|\mathcal{W}|$.
\end{claim}
\begin{proof}[Proof of Claim \ref{c63}]
By assumption (c) of the lemma we have
\begin{equation} \ave_{w\in\mathcal{W}}\ave_{n<h}\ave_{s\in\mathcal{S}_n} f(s,w)\meg \alpha. \end{equation}
On the other hand, by property ($\mathcal{P}$1) and (H1), we see that
\begin{equation} \ave_{n<h}\ave_{s\in\mathcal{S}_n} f(s,w)<\alpha+\gamma_0^2 \end{equation}
for every $w\in\mathcal{W}$. Invoking Fact \ref{markov-f2} the result follows.
\end{proof}
Now we set $\alpha_1=\alpha_0-\gamma_1$. Observe that $\alpha_1=\alpha-\gamma_0-\gamma_1$. 
For every $w\in \mathcal{W}^*$ let
\begin{equation} \label{e617}
\widetilde{\mathcal{N}}_{w}=\{ n<h: \ave_{s\in\mathcal{S}_n} f(s,w)\meg \alpha_1\}.
\end{equation}
\begin{claim} \label{c64}
For every $w\in \mathcal{W}^*$ we have $|\widetilde{\mathcal{N}}_{w}|\meg (1-\gamma_1)h$.
\end{claim}
\begin{proof}[Proof of Claim \ref{c64}]
Let $w\in\mathcal{W}^*$ be arbitrary. Notice first that, by (H2), the definition of the set
$I_w$ in (\ref{e67}) and property ($\mathcal{P}$1), we have 
\begin{equation} |\{n<h:\ave_{s\in\mathcal{S}_n} f(s,w)\meg\alpha_0+\gamma_1^2\}|<\varrho^3 h. \end{equation}
On the other hand, since $w\in\mathcal{W}^*$ we have $\ave_{n<h}\ave_{s\in\mathcal{S}_n}f(s,w)\meg \alpha_0$. 
By Fact \ref{markov-f2} the result follows.
\end{proof}
Next we set $\alpha_2=\alpha_1-\gamma_2$ and we notice that $\alpha_2=\alpha-\gamma_0-\gamma_1-\gamma_2$. 
Finally, for every $w\in \mathcal{W}^*$ let
\begin{equation} \label{newe617}
\mathcal{N}^*_{w}=\widetilde{\mathcal{N}}_{w}\setminus K_{w}.
\end{equation}
Notice that, by Claim \ref{c64} and (H3), for every $w\in \mathcal{W}^*$ we have
\begin{equation} \label{4enewnew}
|\mathcal{N}^*_{w}|\meg (1-\gamma_1-\varrho^3)h.
\end{equation}
We will show that the set $\mathcal{W}^*$ defined in (\ref{e612}) and the family $\{\mathcal{N}^*_w:w\in\mathcal{W}^*\}$
satisfy part (ii) of the lemma. By Claim \ref{c63} and \eqref{4enewnew}, it is enough to show that for every
$w\in \mathcal{W}^*$ and every $n\in\mathcal{N}^*_{w}$ we have that 
$|\Delta^*_{w}\cap \mathcal{S}_n|\meg (1-\gamma_2)|\mathcal{S}_n|$. So, let $w\in \mathcal{W}^*$ and 
$n\in \mathcal{N}^*_{w}$ be arbitrary. Since $n\in\mathcal{N}^*_w\subseteq\widetilde{\mathcal{N}}_{w}$, 
by (\ref{e617}), we have $\ave_{s\in \mathcal{S}_n}f(s,w)\meg\alpha_1$. Moreover $n\notin K_{w}$, and so
\begin{eqnarray}
|\{s\in\mathcal{S}_n:f(s,w)\meg \alpha_1+\gamma_2^2\}| & \stackrel{(\mathcal{P}1)}{=} &
|\{s\in\mathcal{S}_n:f(s,w)\meg \beta+\varrho^2\}| \\
& \mik & |\{s\in\mathcal{S}_n:f(s,w)\meg \beta+\varrho^2/2\big\}| \nonumber \\
& \stackrel{(\ref{e66})}{=} & |\Delta_{w}\cap\mathcal{S}_n| \mik
\varrho^3|\mathcal{S}_n| \stackrel{(\mathcal{P}2)}{\mik} \gamma_2^3 h. \nonumber
\end{eqnarray}
By Fact \ref{markov-f2}, we conclude that $|\Delta^*_{w}\cap\mathcal{S}_n|\meg(1-\gamma_2)|\mathcal{S}_n|$. 
Thus, this case implies that part (ii) of the lemma is satisfied. The above cases are exhaustive, and so, 
the proof of Lemma \ref{l62} is completed.
\end{proof}
The following lemma is the second main result of this section.
\begin{lem} \label{newl63}
Let $\theta>0$. Assume that we are given
\begin{enumerate}
\item[(a)] two nonempty finite sets $\mathcal{S}$ and $\mathcal{W}$,
\item[(b)] an integer $h\meg 1$ and a partition $\{\mathcal{S}_0,...,\mathcal{S}_{h-1}\}$ of $\mathcal{S}$, and
\item[(c)] a function $g:\mathcal{S}\times\mathcal{W}\to [0, +\infty)$ such that
$\ave_{w\in\mathcal{W}}\ave_{n<h}\ave_{s\in \mathcal{S}_n} g(s,w)\mik \theta$.
\end{enumerate}
Then for every $0<\lambda< 1$ there exists $\mathcal{W}^*_\lambda\subseteq \mathcal{W}$ with 
$|\mathcal{W}^*_\lambda|\meg(1-\lambda)|\mathcal{W}|$ and satisfying the following. For every
$w\in \mathcal{W}_\lambda^*$ there exists $\mathcal{N}^*_{w,\lambda}\subseteq \{0,...,h-1\}$ with
$|\mathcal{N}^*_{w,\lambda}|\meg (1-\lambda)h$ such that, setting
\begin{equation} \label{4enewnewnew}
\Delta^*_{w,\lambda}=\{s\in\mathcal{S}: g(s,w)\mik \theta\lambda^{-3}\},
\end{equation}
we have $|\Delta^*_{w,\lambda}\cap \mathcal {S}_n|\meg (1-\lambda)|\mathcal{S}_n|$ for every 
$n\in\mathcal{N}^*_{w,\lambda}$.
\end{lem}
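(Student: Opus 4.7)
The plan is a three-stage nested application of Markov's inequality, specifically Fact \ref{markov-f3}, peeling off one factor of $\lambda^{-1}$ at each level of averaging. The three levels correspond to averaging over $w\in\mathcal{W}$, over $n\in\{0,\dots,h-1\}$, and over $s\in\mathcal{S}_n$, matching exactly the three averages that compose the hypothesis in (c).

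First, define $G_1(w)=\ave_{n<h}\ave_{s\in\mathcal{S}_n} g(s,w)$ for $w\in\mathcal{W}$. Hypothesis (c) says $\ave_{w\in\mathcal{W}} G_1(w)\mik \theta$, so Fact \ref{markov-f3} applied to the sequence $(G_1(w))_{w\in\mathcal{W}}$ with threshold parameter $\lambda$ yields the set
\begin{equation*}
\mathcal{W}^*_\lambda=\{w\in\mathcal{W}: G_1(w)\mik \theta\lambda^{-1}\}
\end{equation*}
of cardinality at least $(1-\lambda)|\mathcal{W}|$. This is the set required by the lemma.

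Next, fix $w\in\mathcal{W}^*_\lambda$ and define $G_2^w(n)=\ave_{s\in\mathcal{S}_n} g(s,w)$ for $n<h$. By construction $\ave_{n<h} G_2^w(n)=G_1(w)\mik\theta\lambda^{-1}$, so a second application of Fact \ref{markov-f3} (with threshold $\theta\lambda^{-1}$ playing the role of $\theta$ and the same $\lambda$) produces
\begin{equation*}
\mathcal{N}^*_{w,\lambda}=\{n<h: G_2^w(n)\mik \theta\lambda^{-2}\}
\end{equation*}
with $|\mathcal{N}^*_{w,\lambda}|\meg (1-\lambda)h$.

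Finally, fix $w\in\mathcal{W}^*_\lambda$ and $n\in\mathcal{N}^*_{w,\lambda}$. Then $\ave_{s\in\mathcal{S}_n} g(s,w)=G_2^w(n)\mik\theta\lambda^{-2}$, so a third application of Fact \ref{markov-f3} to the nonnegative sequence $(g(s,w))_{s\in\mathcal{S}_n}$ with threshold $\theta\lambda^{-2}$ gives
\begin{equation*}
|\{s\in\mathcal{S}_n: g(s,w)\mik \theta\lambda^{-3}\}|\meg (1-\lambda)|\mathcal{S}_n|,
\end{equation*}
which is precisely $|\Delta^*_{w,\lambda}\cap\mathcal{S}_n|\meg(1-\lambda)|\mathcal{S}_n|$. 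There is no real obstacle here: the argument is a routine iterated Markov estimate, and the main point is simply to notice that the factor $\lambda^{-3}$ in the definition of $\Delta^*_{w,\lambda}$ accounts exactly for the three nested averages that appear in hypothesis (c).
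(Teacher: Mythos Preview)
Your proof is correct and follows essentially the same approach as the paper: the paper defines the very same sets $\mathcal{W}^*_\lambda=\{w:\ave_{n<h}\ave_{s\in\mathcal{S}_n} g(s,w)\mik\theta\lambda^{-1}\}$ and $\mathcal{N}^*_{w,\lambda}=\{n:\ave_{s\in\mathcal{S}_n} g(s,w)\mik\theta\lambda^{-2}\}$ and then invokes Fact~\ref{markov-f3} three times without further elaboration. Your write-up simply spells out those three applications explicitly.
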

\begin{proof}
We fix $0<\lambda<1$. Let $\mathcal{W}^*_{\lambda}=\{w\in\mathcal{W}:\ave_{n<h}\ave_{s\in\mathcal{S}_n} g(s,w)\mik \theta\lambda^{-1}\}$. 
Also, for every $w\in\mathcal{W}_\lambda^*$ let $\mathcal{N}^*_{w,\lambda}=\{n<h:\ave_{s\in\mathcal{S}_n} g(s,w)\mik\theta\lambda^{-2}\}$.
Applying Fact \ref{markov-f3} successively three times, it is easy to see that the set $\mathcal{W}^*_{\lambda}$ 
and the family $\{\mathcal{N}^*_{w,\lambda}:w\in\mathcal{W}^*_\lambda\}$ satisfy the requirements of the lemma.
\end{proof}

\subsection{Consequences}

Lemmas \ref{l62} and \ref{newl63} will be used, later on, in a rather special form. We isolate, below,
the exact statement that we need.
\begin{cor} \label{comb1}
Let $0<\alpha\mik\beta\mik 1$ and $0<\varrho\mik 1$ and define $\gamma_0, \gamma_1$ and $\gamma_2$ as in
\eqref{e61}, \eqref{e62} and \eqref{e63} respectively. Also let 
$\theta,\lambda>0$ and $b, p,q\in\nn$ with $b,p,q\meg 1$. Assume that
\begin{equation} \label{ee64}
\gamma_0\mik \Big( \frac{\alpha}{12bp}\Big)^4 \text{ and } \lambda\mik \frac{\alpha}{12bq}.
\end{equation}
Assume, moreover, that we are given
\begin{enumerate}
\item[(a)] two nonempty finite sets $\mathcal{S}$ and $\mathcal{W}$,
\item[(b)] an integer $h\meg 1$ and a partition $\{\mathcal{S}_0,...,\mathcal{S}_{h-1}\}$ of $\mathcal{S}$,
\item[(c)] an integer $M\meg 1$ and a partition $\{\mathcal{W}_1,...,\mathcal{W}_M\}$ of $\mathcal{W}$
such that $|\mathcal{W}_k|=b$ for every $k\in [M]$,
\item[(d)] a subset $\mathcal{A}$ of $[M]$ with $|\mathcal{A}|\meg (\alpha/10)M$,
\item[(e)] for every $j\in [p]$ a function $f_j:\mathcal{S}\times\mathcal{W}\to [0,1]$ such that
\begin{equation} \ave_{w\in\mathcal{W}}\ave_{n<h}\ave_{s\in \mathcal{S}_n} f_j(s,w)\meg \alpha, \end{equation}
\item[(f)] for every $r\in [q]$ a function $g_r:\mathcal{S}\times\mathcal{W}\to [0, +\infty)$ such that
\begin{equation} \ave_{w\in\mathcal{W}}\ave_{n<h}\ave_{s\in \mathcal{S}_n}g_r(s,w)\mik\theta. \end{equation}
\end{enumerate}
Then, either
\begin{enumerate}
\item[(i)] there exist  $j_0\in [p]$, $w_0\in\mathcal{W}$ and $\mathcal{N}_0\subseteq \{0,...,h-1\}$
with $|\mathcal{N}_0|\meg \varrho^3 h$ such that, setting
\begin{equation} \label{4ecor1}
\Delta_0=\{s\in\mathcal{S}: f_{j_0}(s,w_0)\meg\beta+\varrho^2/2\},
\end{equation}
we have $|\Delta_0\cap\mathcal{S}_n|\meg \varrho^3|\mathcal{S}_n|$ for every $n\in\mathcal{N}_0$, or
\item[(ii)] there exist $k_0\in \mathcal{A}$ and $\mathcal{N}^{*}\subseteq \{0,...,h-1\}$ with
$|\mathcal{N}^{*}|\meg \varrho^3 h$ such that, setting $\alpha'=\alpha-\gamma_0-\gamma_1-\gamma_2$ and
\begin{equation} \label{4ecor2}
\Delta^{*}=\bigcap_{w\in\mathcal{W}_{k_0}} \bigcap_{j\in [p]}\bigcap_{r\in [q]} \{s\in\mathcal{S}: 
f_{j}(s,w)\meg\alpha' \text{ and } g_r(s,w)\mik \theta \lambda^{-3}\},
\end{equation}
we have $ |\Delta^{*}\cap \mathcal  {S}_n|\meg \varrho^3 h$ for every $n\in\mathcal{N}^{*}$.
\end{enumerate}
\end{cor}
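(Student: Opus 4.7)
The plan is to apply Lemma \ref{l62} to each of the functions $f_j$ ($j\in[p]$) and Lemma \ref{newl63} to each of the functions $g_r$ ($r\in[q]$), and then to merge the resulting local information through a double counting argument over the block partition $\{\mathcal{W}_1,\dots,\mathcal{W}_M\}$ of $\mathcal{W}$.

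First I would apply Lemma \ref{l62} to $f_j$ for each $j\in[p]$ with parameters $\alpha$, $\beta$, $\varrho$. If for some $j_0\in[p]$ alternative (i) of Lemma \ref{l62} holds, then the very triple $(w_0,\mathcal{N}_0,\Delta_0)$ it supplies witnesses alternative (i) of the corollary, so I would stop. Otherwise, alternative (ii) of Lemma \ref{l62} holds for every $j\in[p]$, yielding a set $\mathcal{W}^{\ast,j}\subseteq\mathcal{W}$ with $|\mathcal{W}^{\ast,j}|\meg (1-\gamma_0)|\mathcal{W}|$ together with, for each $w\in\mathcal{W}^{\ast,j}$, a set $\mathcal{N}^{\ast,j}_w$ with $|\mathcal{N}^{\ast,j}_w|\meg (1-\gamma_1-\varrho^3)h$ on which $f_j(\cdot,w)\meg \alpha'$ holds for a $(1-\gamma_2)$-fraction of $\mathcal{S}_n$. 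Simultaneously, Lemma \ref{newl63} applied to each $g_r$ with parameter $\lambda$ produces sets $\mathcal{W}^{\ast,r}_\lambda$ with $|\mathcal{W}^{\ast,r}_\lambda|\meg(1-\lambda)|\mathcal{W}|$ and sets $\mathcal{N}^{\ast,r}_{w,\lambda}$ with the corresponding $(1-\lambda)$-density conclusion for $g_r(\cdot,w)\mik \theta\lambda^{-3}$.

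Next I would intersect all ``good $w$'' sets, setting $\mathcal{W}^{\ast\ast}=\bigcap_{j\in[p]}\mathcal{W}^{\ast,j}\cap\bigcap_{r\in[q]}\mathcal{W}^{\ast,r}_\lambda$. A union bound gives $|\mathcal{W}\setminus\mathcal{W}^{\ast\ast}|\mik (p\gamma_0+q\lambda)|\mathcal{W}|$, so the number of blocks $\mathcal{W}_k$ that meet $\mathcal{W}\setminus\mathcal{W}^{\ast\ast}$ is at most $(p\gamma_0+q\lambda)bM$. Plugging in the hypotheses \eqref{ee64}, the two summands are bounded by $p\gamma_0 b\mik pb(\alpha/(12bp))^4\mik \alpha^4/12^4$ and $q\lambda b\mik\alpha/12$, whence $(p\gamma_0+q\lambda)bM<(\alpha/10)M\mik |\mathcal{A}|$; therefore some $k_0\in\mathcal{A}$ satisfies $\mathcal{W}_{k_0}\subseteq\mathcal{W}^{\ast\ast}$.

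Fixing such a $k_0$, I would define
\[ \mathcal{N}^{\ast}=\bigcap_{w\in\mathcal{W}_{k_0}}\Big(\bigcap_{j\in[p]}\mathcal{N}^{\ast,j}_w\cap\bigcap_{r\in[q]}\mathcal{N}^{\ast,r}_{w,\lambda}\Big). \]
A second union bound, using properties ($\mathcal{P}$2)--($\mathcal{P}$3) to control $\gamma_1\mik (2\gamma_0)^{1/2}$ and $\varrho\mik\gamma_0$, together with \eqref{ee64}, bounds $|\{0,\dots,h-1\}\setminus\mathcal{N}^{\ast}|$ well below $(1-\varrho^3)h$, so $|\mathcal{N}^{\ast}|\meg\varrho^3 h$. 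Finally, for every $n\in\mathcal{N}^{\ast}$, intersecting the $bp$ density-$(1-\gamma_2)$ subsets of $\mathcal{S}_n$ coming from the $f_j$'s with the $bq$ density-$(1-\lambda)$ subsets coming from the $g_r$'s, and using $\gamma_2\mik 2\gamma_0^{1/4}$ together with \eqref{ee64}, yields a subset of density at least $1-bp\gamma_2-bq\lambda\meg 1-\alpha/4$. This subset is precisely $\Delta^{\ast}\cap\mathcal{S}_n$, which is then of size at least $(1-\alpha/4)|\mathcal{S}_n|$ and, in particular, at least $\varrho^3 h$.

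The main obstacle is the delicate bookkeeping in the two union bounds, since the block size $b$ multiplies every error. Hypothesis \eqref{ee64} is calibrated for exactly this purpose: the fourth-power estimate $\gamma_0\mik(\alpha/(12bp))^4$ is what keeps $bp\gamma_2$ small at the last step, because $\gamma_2$ can only be estimated as $O(\gamma_0^{1/4})$ via property ($\mathcal{P}$3), whereas the merely linear constraint $\lambda\mik\alpha/(12bq)$ suffices on the $g_r$-side since no root is extracted there.
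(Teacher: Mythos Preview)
Your proposal is correct and follows essentially the same route as the paper's proof: apply Lemma~\ref{l62} to each $f_j$ and Lemma~\ref{newl63} to each $g_r$, intersect the resulting ``good'' subsets of $\mathcal{W}$, use the block structure together with \eqref{ee64} to locate $k_0\in\mathcal{A}$ with $\mathcal{W}_{k_0}$ entirely good, and then run two further union bounds (over $\mathcal{W}_{k_0}\times[p]\times[q]$) to control $\mathcal{N}^{\ast}$ and $\Delta^{\ast}\cap\mathcal{S}_n$. The numerical bookkeeping you sketch, including the use of properties ($\mathcal{P}$2)--($\mathcal{P}$3) to handle $\gamma_1$ and $\gamma_2$, matches the paper's computations almost line for line.
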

\begin{proof}
Assume that alternative (i) is not satisfied. We will show that part (ii) of the corollary holds true. 
Notice first that, by Lemma \ref{l62}, for every $j\in [p]$ there exists $\mathcal{W}_j^*\subseteq\mathcal{W}$ 
with $|\mathcal{W}_j^*|\meg (1-\gamma_0)|\mathcal{W}|$ and satisfying the following. For every $w\in\mathcal{W}_j^*$ 
there exists $\mathcal{N}_{w, j}^*\subseteq \{0,...,h-1\}$ with $|\mathcal{N}_{w,j}^*|\meg(1-\gamma_1-\varrho^3)h$
such that, setting $\Delta_{w,j}^*=\{s\in\mathcal{S}:f_{j}(s,w)\meg \alpha-\gamma_0-\gamma_1-\gamma_2 \big\}$,
we have $|\Delta_{w,j}^*\cap \mathcal{S}_n|\meg (1-\gamma_2)|\mathcal{S}_n|$ for every $w\in\mathcal{W}_j^*$ 
and every $n\in\mathcal{N}_{w,j}^*$.

Next observe that, by Lemma \ref{newl63}, for every $r\in [q]$ there exists $\mathcal{W}^*_{\lambda,r}\subseteq \mathcal{W}$ 
with  $|\mathcal{W}^*_{\lambda,r}|\meg(1-\lambda)|\mathcal{W}|$ and satisfying the following. For every 
$w\in \mathcal{W}^*_{\lambda,r}$ there exists $\mathcal{N}^*_{w,\lambda,r}\subseteq \{0,...,h-1\}$ with
$|\mathcal{N}^*_{w,\lambda,r}|\meg (1-\lambda)h$ such that, setting 
\begin{equation} \Delta^*_{w,\lambda,r}=\{s\in\mathcal{S}:g_r(s,w)\mik \theta\lambda^{-3}\}, \end{equation}
we have $|\Delta^*_{w,\lambda,r}\cap\mathcal {S}_n| \meg (1-\lambda)|\mathcal{S}_n|$ for every
$n\in\mathcal{N}^*_{w,\lambda}$. We set
\begin{equation} \label{eqwwss}
\mathcal{W}^{*}=\bigcap_{j\in [p]}\mathcal{W}_j^*\cap\bigcap_{r\in [q]}\mathcal{W}^*_{\lambda,r}
\end{equation}
and we notice that
\begin{equation} \label{eqstar}
|\mathcal{W}^{*}|\meg (1-p\gamma_0-q\lambda)|\mathcal{W}|.
\end{equation}
\begin{claim} \label{c64comb}
There exists $k_0\in \mathcal{A}$ such that $\mathcal{W}_{k_0}\subseteq\mathcal{W}^{*}$.
\end{claim}
\begin{proof}[Proof of Claim \ref{c64comb}]
Let $\mathcal{B}=\{k\in [M]:\mathcal{W}_k\nsubseteq \mathcal{W}^*\}$. Clearly it is enough to show that
$|\mathcal{B}|<|\mathcal{A}|$. To this end observe that, by \eqref{ee64}, we have
\begin{equation} \label{pq}
bp\gamma_0+bq\lambda\mik\Big(\frac{\alpha}{12}\Big)^4+\frac{\alpha}{12}<\frac{\alpha}{10}.
\end{equation}
Since $\{\mathcal{W}_1,...,\mathcal{W}_M\}$ is a partition of $\mathcal{W}$ with $|\mathcal{W}_k|=b$ 
or every $k\in [M]$, we get
\begin{equation} \label{4ecor3}
|\mathcal{B}|\mik |\mathcal{W}\setminus \mathcal{W}^*| \stackrel{(\ref{eqstar})}{\mik} (p\gamma_0+q\lambda)
|\mathcal{W}| \mik (p\gamma_0+q\lambda)bM\stackrel{(\ref{pq})}{<}(\alpha/10)M\mik|\mathcal{A}|
\end{equation}
and the claim is proved.
\end{proof}
Now let
\begin{equation} \label{enstar}
\mathcal{N}^{*}=\bigcap_{w\in\mathcal{W}_{k_0}}\Big(\bigcap_{j\in [p]}\mathcal{N}_{w,j}^*\cap
\bigcap_{r\in [q]}\mathcal{N}_{w,\lambda,r}^*\Big).
\end{equation}
We will show that the integer $k_0$ obtained by Claim \ref{c64comb} and the set $\mathcal{N}^*$
satisfy the requirements imposed in the second part of the corollary. First we will show that
$|\mathcal{N}^{*}|\meg \varrho^3 h$. Since $\varrho\mik\gamma_0\mik 1/12^4$, it is enough to prove
that $|\mathcal{N}^*|\meg (3/4)h$. To this end notice that, by Claim \ref{c64comb} and part (c) of
our assumptions, we have $\mathcal{W}_{k_0}\subseteq \mathcal{W}^{*}$ and $|\mathcal{W}_{k_0}|=b$.
Now recall that $|\mathcal{N}_{w,j}^*|\meg(1-\gamma_1-\varrho^3)h$ for every $j\in [p]$ and every
$w\in\mathcal{W}^*_j$ while $|\mathcal{N}^*_{w,\lambda,r}|\meg (1-\lambda)h$ for every $r\in [q]$
and every $w\in\mathcal{W}^*_{\lambda,r}$. Taking into account these remarks and invoking (\ref{eqwwss})
and (\ref{enstar}), we see that
\begin{equation} \label{er}
|\mathcal{N}^{*}|\meg \big(1-b(p\gamma_1+p\varrho^3+q\lambda)\big)h.
\end{equation}
Using (\ref{ee64}) and properties ($\mathcal{P}$2) and ($\mathcal{P}$3) isolated after (\ref{e63}), we also have that
\begin{equation} \label{4ecornew}
b(p\gamma_1+p\varrho^3+q\lambda)\mik b\big( p(2\gamma_0)^{1/2}+p\gamma_0^3+q\lambda\big)<3\frac{1}{12}=\frac{1}{4}.
\end{equation}
Therefore, $|\mathcal{N}^{*}|\meg (3/4)h$ as claimed. Next we work to show that for every $n\in\mathcal{N}^*$ we 
have that $|\Delta^{*}\cap \mathcal {S}_n|\meg \varrho^3 |\mathcal{S}_n|$. Observe that it is enough to prove that
\begin{equation} |\Delta^{*}\cap \mathcal {S}_n|\meg (3/4)|\mathcal{S}_n| \end{equation}
for every $n\in\mathcal{N}^*$. So let $n\in\mathcal{N}^*$ be arbitrary. Notice that
\begin{equation} \label{4ecornewnew}
\Delta^*=\bigcap_{w\in\mathcal{W}_{k_0}}\Big(\bigcap_{j\in [p]}\Delta^*_{w,j}\cap\bigcap_{r\in [q]}\Delta_{w,\lambda,r}^*\Big).
\end{equation}
Arguing as above and using the estimates for the size of the sets $\Delta^*_{w,j}\cap\mathcal{S}_n$ and 
$\Delta_{w,\lambda,r}^*\cap\mathcal{S}_n$ we see that
\begin{equation} \label{eqh12}
|\Delta^{**}\cap \mathcal{S}_n|\meg \big(1-b(p\gamma_2+q\lambda)\big)|\mathcal{S}_n|.
\end{equation}
Invoking (\ref{ee64}) and property ($\mathcal{P}$3) once again, we have
\begin{equation} \label{4ecorreallynew}
b(p\gamma_2+q\lambda)\mik b(p2\gamma_0^{1/4}+q\lambda)\mik \frac{2\alpha}{12}+\frac{\alpha}{12}\mik \frac{1}{4}.
\end{equation}
Hence, $|\Delta^{*}\cap \mathcal{S}_n|\meg (3/4)|\mathcal{S}_n|$. The proof of Corollary \ref{comb1} is thus completed.
\end{proof}


\section{Outline of the proof of Theorem \ref{it5}}

\numberwithin{equation}{section}

In this section we shall give a detailed outline of the proof of Theorem \ref{it5}. Very briefly, 
and oversimplifying dramatically, the proof is based on a density increment strategy and is similar in spirit
to the proof of \cite[Lemma 27]{DKT2}. There are, however, significant differences and, therefore, a novelty
of the approach. The most important one is that for a fixed ``dimension" $d$, we need to apply Theorem 
\ref{uniform dhl} for a ``dimension" $d'$ which is much bigger compared to $d$. We shall comment further
on this feature of the proof below.

To proceed with our discussion we need, first, to introduce the notions of ``strong denseness" and 
``strong negligibility". These concepts are our main conceptual tools and are critical for the proof
of Theorem \ref{it5}. We start with the following definition (see, also, \cite[Definition 14]{DKT2}).
\begin{defn} \label{nd70}
Let $D:\otimes\bfct\to \mathcal{P}(W)$ be a level selection. Also let $\bfcs$ be a vector strong subtree 
of $\bfct$, $w\in W$ and $0<\alpha\mik 1$.
\begin{enumerate}
\item[(1)] We say that $D$ is \emph{$(w,\bfcs,\alpha)$-dense} if for every $\bfs\in\otimes\bfcs$ and
every $r\in D(\bfs)$ we have $\ell_W(w)\mik \ell_W(r)$ and, moreover,  $\dens(D(\bfs) \ | \ w)\meg \alpha$
for every $\bfs\in\otimes\bfcs$.
\item[(2)] We say that $D$ is \emph{$(w,\bfcs,\alpha)$-strongly dense} if $D$ is $(w',\bfcs,\alpha)$-dense
for every $w'\in\immsuc_W(w)$.
\end{enumerate}
\end{defn}
Next we introduce the notions of ``negligibility" and ``strong negligibility".
\begin{defn} \label{nd61}
Let $D:\otimes\bfct\to \mathcal{P}(W)$ be a level selection. Also let $F\subseteq \otimes\bfct(m)$ for
some $m<h(\bfct)$, $w\in W$ and $0<\theta\mik 1$.
\begin{enumerate}
\item[(1)] We say that the pair $(F, w)$ is $\theta$-\emph{negligible with respect to} $D$ if for every
$\bft\in F$ and every $r\in D(\bft)$ we have $\ell_W(w)\mik \ell_W(r)$ and, moreover,
$\dens\big(\bigcap_{\bft\in F}D(\bft) \ | \ w\big)<\theta$.
\item[(2)] We say that the pair $(F, w)$ is \emph{strongly}  $\theta$-\emph{negligible with respect to}
$D$ if $(F, w')$ is $\theta$-negligible for every $w'\in\immsuc_W(w)$.
\end{enumerate}
\end{defn}
Notice that ``negligibility" is, essentially, the converse of the notion of ``strong correlation" introduced
in Definition \ref{id4}. Indeed, let $D:\otimes\bfct\to\mathcal{P}(W)$ be a level selection, 
$\bfcf\in\strong_2(\bfct)$ and $w\in D\big(\bfcf(0)\big)$. Also let $0<\theta\mik 1$. Then observe that either
the pair $(\bfcf,w)$ is strongly $\theta$-correlated with respect to $D$, or there exists $p\in\{0,...,b_W-1\}$ 
such that the pair $(\otimes\bfcf(1),w^{\con_W}\!p)$ is $\theta$-negligible.

After this preliminary discussion we are ready to comment on the proof. So let $\bfct$ be a finite vector homogeneous
tree with $b_{\bfct}=(b_1,...,b_d)$, $W$ a homogeneous tree with $b_W=b_{d+1}$ and  $D:\otimes\bfct\to \mathcal{P}(W)$
a level selection of density $\ee$ and of sufficiently large height. Recall that we need to find a pair $(\bfcf,w)$
which is strongly $\theta$-correlated with respect to $D$,  where $\theta$ is an appropriately chosen positive
constant that depends only on $b_1,...,b_d, b_{d+1}$ and $\ee$. 

The first observation we make -- an observation which is quite standard in proofs of this sort -- is that we
can assume that we have ``lack of density increment".  This means that we cannot increase, significantly, the
density of the level selection $D$ by restricting its values to a subtree of the form $\suc_W(w)$ for some
$w\in W$. This assumption is easily seen to be equivalent to a strong ``concentration hypothesis" for a 
probability measure on $\otimes\bfct$ introduced by H. Furstenberg and B. Weiss in \cite{FW}. We will not
comment further on this part of the proof. Instead we refer the reader to \cite[\S 4]{DKT2} for a detailed
exposition.

Next, with the ``concentration hypothesis" at hand, we devise a ``greedy" algorithm in order to find the
desired pair $(\bfcf,w)$. This algorithm will terminate after at most $K=\udhl\big(b_1,...,b_d|2,\ee/(4b_{d+1})\big)$
iterations. Of course, it is crucial that the number of iterations is a priori controlled. 

After the $n$-th iteration we will have as an input a vector strong subtree $\bfcz_n$ of $\bfct$ of sufficiently
large height and a node $\tilde{w}_n\in W$ such that the level selection $D$ is 
$(\tilde{w}_n,\suc_{\bfcz_n}(\bfz),\ee_n)$-dense for every $\bfz\in \otimes\bfcz_n(n)$, where $\ee_n$ is roughly
equal to $\ee$. We will also be given a subset $\Gamma_n$ of $\otimes\bfcz_n\upharpoonright (n-1)$ and a small
constant $\theta_n$ such that the pair $(\otimes\bfcf(1),\tilde{w}_n)$ is $\theta_n$-negligible for every 
$\bfcf\in \strong_2(\bfcz_n)$ with $\bfcf(0)\in\Gamma_n$ and $\otimes\bfcf(1) \subseteq \otimes\bfcz_n(m)$
for some $m\meg n$. 

First we show that we can select a node $w\in \suc_W(\tilde{w}_n)$ and a vector strong subtree $\bfcs$ of
$\bfcz_n$ of sufficiently large height, with $\bfcs\upharpoonright n=\bfcz_n\upharpoonright n$ and such that
the pair $(\otimes\bfcf(1),w)$ is strongly $\theta_{n+1}$-negligible for every $\bfcf\in \strong_2(\bfcs)$ with
$\bfcf(0)\in\Gamma_n$ and $\otimes\bfcf(1) \subseteq \otimes\bfcs(m)$ for some $m\meg n+1$. Here $\theta_{n+1}$
is a numerical parameter which is effectively controlled by $\theta_n$. It is precisely in this step that we need
to invoke Theorem \ref{uniform dhl} for a ``dimension" $d'$ which is bigger than $d$.  Moreover, using the
``concentration hypothesis" mentioned above, we can ensure that the level selection $D$ is
$(w,\suc_{\bfcs}(\bfs),\ee')$-strongly dense for every $\bfs\in \otimes\bfcs(n+1)$, where $\ee'$ is another
numerical parameter that can be arranged to be as close to $\ee$ as we wish. Finally, the node $w$ is chosen
so that the set $B:=\{\bfs\in\otimes\bfcs(n): w\in D(\bfs)\}$ has density at least $\ee/4$ and, on
the other hand, $w\notin D_{\bfcf}$ for every $\bfcf\in \strong_2(\bfcs,n)$ with $\bfcf(0)\in \Gamma_n$.
The details for this selection are presented in Lemma \ref{c67}.

The second part of the argument is based on an application of Milliken's Theorem and is the content of Lemma
\ref{c72} in the main text. Specifically, if we cannot find a pair $(\bfcf,w)$ which is strongly 
$\theta_{n+1}$-correlated with respect to $D$, then we may select $p\in\{0,...,b_{d+1}-1\}$, a vector strong
subtree $\bfcz$ of $\bfcs$ of sufficiently large height and with $\bfcz\upharpoonright n=\bfcs\upharpoonright n$,
as well as, a subset $B'$ of $B$ of density at least $\ee/(4 b_{d+1})$ such that the pair $(\otimes\bfcf(1),w^{\con_W}\!p)$
is $\theta_{n+1}$-negligible for every $\bfcf\in\strong_2(\bfcz)$ with $\bfcf(0)\in B'$.  We set
``$\tilde{w}_{n+1}=w^{\con_W}\!p$", ``$\bfcz_{n+1}=\bfcz$" and ``$\Gamma_{n+1}=\Gamma_n\cup B'$" and we proceed
to the next iteration.

If after $K$ many iterations the desired pair $(\bfcf,w)$ has not been found, then using the sets 
$\Gamma_1,...,\Gamma_K$ construct above we can easily derive a contradiction. This implies, of course, that the
algorithm will eventually terminate, completing thus the proof of Theorem \ref{it5}.


\section{Preliminary tools}

\numberwithin{equation}{section}

As we have already mentioned, our goal in this section is to develop the main tools needed for the proof
of Theorem \ref{it5}. The first one is the following lemma.
\begin{lem} \label{c67}
Let $d\in\nn$ with $d\meg 1$ and $b_1,...,b_d, b_{d+1}\in\nn$ with $b_i\meg 2$ for all $i\in [d+1]$. 
Let $0<\alpha\mik\beta\mik 1$, $0<\varrho\mik 1$ and define $\gamma_0$, $\gamma_1$ and $\gamma_2$ as in \eqref{e61},
\eqref{e62} and \eqref{e63} respectively. Also let $m\in\nn$ and $\theta,\lambda>0$. Set $q_m=q(b_1,...,b_d,m)$, where
$q(b_1,...,b_d,m)$ is defined in \eqref{e214}, and assume that
\begin{equation} \label{ee626}
\gamma_0\mik \Big( \frac{\alpha}{12b_{d+1}(\prod_{i=1}^d b_i)^{m+1}}\Big)^4, \ \lambda\mik \frac{\alpha}{12b_{d+1}q_m} 
\text{ and } \theta\mik\frac{2\alpha}{5q_m}.
\end{equation}
Assume, moreover, that we are given
\begin{enumerate}
\item[(a)] a homogeneous tree $W$ with $b_W=b_{d+1}$,
\item[(b)] a finite vector homogeneous tree $\bfcz=(Z_1,...,Z_d)$ with $b_{\bfcz}=(b_1,...,b_d)$,
\item[(c)] a level selection $D:\otimes\bfcz\to\mathcal{P}(W)$ and $\ell\in\nn$ such that $D(\bfz)\subseteq W(\ell)$
for every $\bfz\in\otimes\bfcz(m)$,
\item[(d)] a node $\tilde{w}\in W$ with $\ell_W(\tilde{w})\mik\ell$ and such that $D$ is 
$(\tilde{w},\suc_{\bfcz}(\bfz),\alpha)$-dense for every $\bfz\in\otimes\bfcz(m)$,
\item[(e)] if $m\meg 1$, a subset $\Gamma$ of $\otimes\bfcz\upharpoonright(m-1)$ such that
$(\otimes\bfcg(1),\tilde{w})$ is $\theta$-negligible with respect to $D$ for every
$\bfcg\in\bigcup_{n=m}^{h(\bfcz)-1}\strong_2(\bfcz,n)$ with $\bfcg(0)\in\Gamma$.
\end{enumerate}
Finally, let $N\in\nn$ with $N\meg 1$ and suppose that
\begin{equation} \label{ne627}
h(\bfcz)\meg (m+1)+ \varrho^{-3} \udhl\big(\underbrace{b_1,...,b_1}_{b_1^{m+1}-\mathrm{times}}, ..., 
\underbrace{b_d,...,b_d}_{b_d^{m+1}-\mathrm{times}}|N,\varrho^3\big).
\end{equation}
Then, either
\begin{enumerate}
\item[(I)] there exist $w'\in W(\ell+1)\cap\suc_W(\tilde{w})$ and a vector strong subtree $\bfcz'$ of $\bfcz$
with $h(\bfcz')=N$ such that $D$ is $(w',\bfcz',\beta+\varrho^2/2)$-dense, or
\item[(II)] there exist $w''\in W(\ell)\cap\suc_W(\tilde{w})$, a vector strong subtree $\bfcz''$ of $\bfcz$
with $\bfcz''\upharpoonright m=\bfcz\upharpoonright m$ and $h(\bfcz'')=(m+1)+N$, as well as,
$B\subseteq \otimes \bfcz(m)$ with $|B|\meg (\alpha/2)|\!\otimes \bfcz(m)|$ satisfying the following.
\begin{enumerate}
\item[(II1)]  We have $w''\in \bigcap_{\bfz\in B}D(\bfz)$. On the other hand, 
$w''\notin \bigcap_{\bfz\in\otimes\bfcg(1)}D(\bfz)$ for every $\bfcg \in\strong_2(\bfcz,m)$ with $\bfcg(0)\in \Gamma$.
\item[(II2)]  The level selection $D$ is $(w'',\suc_{\bfcz''}(\bfz),\alpha-\gamma_0-\gamma_1-\gamma_2)$-strongly
dense for every $\bfz\in\otimes\bfcz''(m+1)$.
\item[(II3)] If $\bfcg\in\strong_2(\bfcz'',n)$ for some $n\in\{m+1,..., h(\bfcz'')-1\}$ with $\bfcg(0)\in\Gamma$, 
then the pair $(\otimes\bfcg(1),w'')$ is strongly $(\theta\lambda^{-3})$-negligible with respect to the level selection $D$.
\end{enumerate}
\end{enumerate}
\end{lem}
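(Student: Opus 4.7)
The plan is to apply Corollary \ref{comb1} to a carefully designed averaging setup and to pass each of its two alternatives through the uniform density Halpern--L\"auchli theorem (Theorem \ref{uniform dhl}) to obtain case (I) and case (II) respectively. I set $h=h(\bfcz)-m-1$ and take $\mathcal{S}=\bigsqcup_{k=0}^{h-1}\otimes\bfcz(m+1+k)$, partitioned by level as $\mathcal{S}_k=\otimes\bfcz(m+1+k)$; $\mathcal{W}=W(\ell+1)\cap\suc_W(\tilde w)$, partitioned into pieces $\immsuc_W(w'')$ of size $b_{d+1}$, one per $w''\in W(\ell)\cap\suc_W(\tilde w)$ (so $M=|W(\ell)\cap\suc_W(\tilde w)|$); and $\mathcal{A}$ to consist of those $w''$ that simultaneously (a) lie in $D(\bfz)$ for $\meg(\alpha/2)|\otimes\bfcz(m)|$ many $\bfz\in\otimes\bfcz(m)$ and (b) avoid $\bigcap_{\bfz\in\otimes\bfcg(1)}D(\bfz)$ for every $\bfcg\in\strong_2(\bfcz,m)$ with $\bfcg(0)\in\Gamma$. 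Hypothesis (d) and Fact \ref{markov-f1} ensure (a) is met by $\meg(\alpha/2)M$ nodes; hypothesis (e), the bound $\theta\mik 2\alpha/(5q_m)$, and the count $\mik q_m$ of relevant $\bfcg$'s via Fact \ref{f21} ensure (b) fails for $\mik (2\alpha/5)M$ nodes; hence $|\mathcal{A}|\meg(\alpha/10)M$.

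For the $f$- and $g$-functions I take $p=(\prod_{i=1}^d b_i)^{m+1}=|\otimes\bfcz(m+1)|$ many $f$'s indexed by $\bft'\in\otimes\bfcz(m+1)$, with $f_{\bft'}(\bfs,w')=\dens(D(\bfs)\mid w')$ when $\bfs\in\otimes\suc_\bfcz(\bft')$ and $\alpha$ otherwise (hypothesis (d) plus the identity $\dens(D(\bfs)\mid\tilde w)=\ave_{w'\in\mathcal{W}}\dens(D(\bfs)\mid w')$ gives $\ave f_{\bft'}\meg\alpha$), and $q=q_m$ many $g$'s indexed by $\bfcg\in\strong_2(\bfcz,m+1)$ with $\bfcg(0)\in\Gamma$, with $g_\bfcg(\bfs,w)=\dens(\bigcap_{\bfz\in\otimes\bfcg(1)}D(\bfz)\mid w)\cdot\mathbf{1}_{\bfs\in\otimes\bfcg(1)}$ (hypothesis (e) gives $\ave g_\bfcg\mik\theta$). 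The bounds in \eqref{ee626} precisely match the hypotheses $\gamma_0\mik(\alpha/(12bp))^4$ and $\lambda\mik\alpha/(12bq)$ of Corollary \ref{comb1} with $b=b_{d+1}$, so the corollary applies. In case (i) of the corollary we obtain some $\bft'_{j_0}$, a node $w_0\in\mathcal{W}$, and a level set $\mathcal{N}_0$ of size $\meg\varrho^3 h$ on which $\Delta_0=\{\bfs:f_{\bft'_{j_0}}(\bfs,w_0)\meg\beta+\varrho^2/2\}$ is relatively dense; since $\alpha\mik\beta<\beta+\varrho^2/2$ forces $\Delta_0\subseteq\otimes\suc_\bfcz(\bft'_{j_0})$ with $\dens(D(\bfs)\mid w_0)\meg\beta+\varrho^2/2$ on $\Delta_0$, and since $\varrho^3 h\meg\udhl(b_1,\ldots,b_d\mid N,\varrho^3)$ by \eqref{ne627}, Theorem \ref{uniform dhl} in dimension $d$ yields a vector strong subtree $\bfcz'$ of height $N$ with $\otimes\bfcz'\subseteq\Delta_0$, proving case (I) with $w'=w_0$.

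In case (ii) of the corollary we obtain $w''\in\mathcal{A}$ (so (II1) holds), a level set $\mathcal{N}^*$ of size $\meg\varrho^3 h$, and a set $\Delta^*$ that encodes pointwise at $\bfs$ both the strong density with respect to every $w\in\immsuc_W(w'')$ and, when $\bfs$ lies in the second level of some $\bfcg\in\strong_2(\bfcz,m+1)$ with $\bfcg(0)\in\Gamma$, the strong negligibility of $(\otimes\bfcg(1),w'')$. The desired $\bfcz''$ of height $(m+1)+N$ with $\bfcz''\upharpoonright m=\bfcz\upharpoonright m$ is then to be produced by applying Theorem \ref{uniform dhl} at the high-dimensional vector tree $\bfcb=(\suc_{Z_i}(z):i\in[d],\ z\in Z_i(m+1))$ of dimension $\sum_i b_i^{m+1}$---the very dimension dictated by \eqref{ne627}---after lifting $\Delta^*$ to a subset $\widehat{\Delta^*}\subseteq\otimes\bfcb$ of density $\meg\varrho^3$ at every level in $\mathcal{N}^*$, such that any vector strong subtree of $\bfcb$ contained in $\widehat{\Delta^*}$ assembles with $\bfcz\upharpoonright(m+1)$ into a $\bfcz''$ satisfying (II2), (II3). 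The hard part is this lifting step: $\widehat{\Delta^*}$ must encode the conjunction of pointwise conditions across all $\bft'\in\otimes\bfcz(m+1)$ needed for (II2) to hold at every $\bfz\in\otimes\bfcz''(m+1)$, and it must also secure (II3) at higher levels $n>m+1$ that are not directly controlled by the $g_\bfcg$'s (these track only $n=m+1$). The negligibility at higher levels must be derived from the strong density already secured together with hypothesis (e) by an additional averaging step exploiting the product structure of $\bfcb$; balancing the resulting density loss against the negligibility bound is precisely what determines the shape of hypotheses \eqref{ee626} and \eqref{ne627}.
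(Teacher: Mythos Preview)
Your setup for $\mathcal{A}$ and the treatment of alternative (i) are fine, but alternative (ii) has a genuine gap, and it originates in your choice of $\mathcal{S}$. You take $\mathcal{S}=\bigsqcup_k\otimes\bfcz(m+1+k)$, the level product of the \emph{$d$-dimensional} tree $\bfcz$, whereas the paper takes $\mathcal{S}=\otimes\bfcs$ where $\bfcs=\bigl(\suc_{Z_i}(z):i\in[d],\ z\in Z_i(m+1)\bigr)$ is already the \emph{high-dimensional} tree you call $\bfcb$. This is not a cosmetic difference: with your $\mathcal{S}$, the conclusion $|\Delta^*\cap\mathcal{S}_n|\meg\varrho^3|\mathcal{S}_n|$ only says that a $\varrho^3$-fraction of $\bfs\in\otimes\bfcz(m+1+n)$ satisfy $\dens(D(\bfs)\mid w)\meg\alpha'$ for every $w\in\immsuc_W(w'')$. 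Your ``lift'' $\widehat{\Delta^*}\subseteq\otimes\bfcb$ must consist of those $\mathbf{u}\in\otimes\bfcb(n)$ for which \emph{every} projection $\Pi_{\bft'}(\mathbf{u})$ lies in $\Delta^*$; but if $\Delta^*$ happens to avoid entirely one of the subtrees $\otimes\suc_\bfcz(\bft')$ (which the density bound in $\otimes\bfcz(m+1+n)$ does not rule out), then $\widehat{\Delta^*}$ is empty. So the lift cannot be guaranteed to have density $\meg\varrho^3$, and the high-dimensional UDHL in \eqref{ne627} cannot be invoked.

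The second gap is in your $g$-functions. Your $g_\bfcg(\bfs,w)=\dens\bigl(\bigcap_{\bfz\in\otimes\bfcg(1)}D(\bfz)\mid w\bigr)\cdot\mathbf{1}_{\bfs\in\otimes\bfcg(1)}$ carries no information about $\bfs$ except membership in the fixed set $\otimes\bfcg(1)\subseteq\otimes\bfcz(m+1)$; in particular, the constraint $g_\bfcg(\bfs,w)\mik\theta\lambda^{-3}$ is vacuous for $\bfs$ at levels $>m+1$, so your $\Delta^*$ says nothing about (II3) there. You acknowledge this and propose to recover (II3) at higher levels from hypothesis (e) ``by an additional averaging step exploiting the product structure of $\bfcb$'', but hypothesis (e) bounds densities relative to $\tilde{w}$, not relative to the children of the unknown $w''$; there is no mechanism that converts one into the other without going through Corollary \ref{comb1} again. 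The paper solves both problems at once by working in $\otimes\bfcs$ from the start: it sets $f_\bfz(\bfs,w)=\dens\bigl(D(\Pi_\bfz(\bfs))\mid w\bigr)$ for each $\bfz\in\otimes\bfcz(m+1)$, and for each $\bfcf\in\strong_2(\bfcz,m+1)$ with $\bfcf(0)\in\Gamma$ it sets $g_\bfcf(\bfs,w)=\dens\bigl(\bigcap_{\bfz\in\otimes\mathbf{G}^\bfcf_\bfs(1)}D(\bfz)\mid w\bigr)$, where $\mathbf{G}^\bfcf_\bfs\in\strong_2(\bfcz)$ has root $\bfcf(0)$ and second level $\{\pi^i_\bfz(\bfs):\bfz\in\otimes\bfcf(1)\}$ at the level of $\bfs$. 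Then $\Delta^*\subseteq\otimes\bfcs$ already has density $\meg\varrho^3$ in the high-dimensional tree (no lift needed), and membership in $\Delta^*$ encodes both (II2) at every $\bfz\in\otimes\bfcz(m+1)$ and (II3) at every higher level simultaneously.
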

\begin{proof}
We will give the proof under the assumption that $m\meg 1$. If $m=0$, then the proof is similar and, in fact, 
simpler since Step 4 below is not needed. We will use Corollary \ref{comb1}. To this end we need, of course, to define
all necessary data. First we set $\mathcal{W}=W(\ell+1)\cap\suc_W(\tilde{w})$. Also we will define
\begin{enumerate}
\item[(1)] a finite vector homogeneous tree $\bfcs=(S_1,...,S_n)$ with $n=\sum_{i=1}^{d}b_i^{m+1}$,
\item[(2)] a subset $A$ of $W(\ell)\cap\suc_W(\tilde{w})$,
\item[(3)] for every $\bfz\in\otimes\bfcz(m+1)$ a function $f_\bfz:\otimes\bfcs\times \mathcal{W}\to [0,1]$ and
\item[(4)] for every $\bfcf\in \strong_2(\bfcz, m+1)$ a function $g_\bfcf:\otimes\bfcs\times \mathcal{W}\to [0,1]$.
\end{enumerate}

\subsubsection*{Step 1: defining the finite vector homogeneous tree $\bfcs$}

For every $i\in [d]$ let $p_i=b_i^{m+1}$ and notice that the cardinality of $Z_i(m+1)$ is $p_i$. 
Let $\{z^i_1<_{\text{lex}}... <_{\text{lex}} z^i_{p_i}\}$ be the lexicographical increasing enumeration of $Z_i(m+1)$.
For every $i\in [d]$ and every $j\in [p_i]$ let $S^i_j=\suc_{Z_i}(z^i_j)$ and set $\bfcs=(S^i_j)_{i=1, j=1}^{d \ \ \ p_i}$. 
It is clear that $\bfcs$ is a finite vector homogeneous tree with $h(\bfcs)=h(\bfcz)-(m+1)$. Moreover,
\begin{equation} \label{neqhei}
b_\bfcs=\big(\underbrace{b_1,...,b_1}_{b_1^{m+1}-\mathrm{times}}, ..., \underbrace{b_d,...,b_d}_{b_d^{m+1}-\mathrm{times}}\big).
\end{equation}
The vector homogeneous trees $\bfcz$ and $\bfcs$ are naturally associated. Precisely, let $\bfz=(z_1,...,z_d)\in\otimes\bfcz(m+1)$ 
and $i\in [d]$ be arbitrary. Our assumptions permits us to define the ``projection" $\pi^i_{\bfz}:\otimes\bfcs\to \suc_{Z_i}(z_i)$. 
Formally it is defined as follows: if $\bfs=(s^i_j)_{i=1, j=1}^{d \ \ \ p_i}\in\otimes\bfcs$, then $\pi^i_{\bfz}(\bfs)$
is the unique node $s^i_j$ such that $s^i_j\in\suc_{Z_i}(z_i)$. Notice that $\pi^i_{\bfz}$ is onto. We will also need
the ``full-projection" $\Pi_{\bfz}:\otimes\bfcs\to\otimes\suc_{\bfcz}(\bfz)$ defined by  
$\Pi_{\bfz}(\bfs)=\big( \pi^1_{\bfz}(\bfs),..., \pi^d_{\bfz}(\bfs)\big)$. Clearly $\Pi_{\bfz}$ is onto.

\subsubsection*{Step 2: defining the set $A$}

Let $C$ be the subset of $W(\ell)\cap \suc_W(\tilde{w})$ defined by
\begin{equation} \label{ee920}
w\in C \Leftrightarrow |\{\bfz\in\otimes\bfcz(m): w\in D(\bfz)\}|\meg (\alpha/2)|\!\otimes\bfcz(m)|.
\end{equation}
Using condition (d) of the statement of the lemma, it is easy to verify that
\begin{equation} |C|\meg (\alpha/2)|W(\ell)\cap \suc_W(\tilde{w})|. \end{equation}
Also let $\mathcal{G}=\{\bfcg\in\strong_2(\bfcz,m): \bfcg(0)\in\Gamma\}$. Invoking hypothesis (e) and Fact \ref{f21}, we get that
\begin{eqnarray} \label{ee9219}
\dens\Big(\bigcup_{\bfcg\in \mathcal{G}} \bigcap_{\bfz\in\otimes\bfcg(1)} D(\bfz) \ | \ \tilde{w}\Big) & \mik &
|\strong_2(\bfcz,m)| \theta \\
& < & |\strong_2(\bfcz,m+1)| \theta =q_m\theta \stackrel{(\ref{ee626})}{\mik}2\alpha/5. \nonumber
\end{eqnarray}
We set
\begin{equation} \label{ee92199}
A= C \setminus \Big( \bigcup_{\bfcg\in\mathcal{G}} \bigcap_{\bfz\in\otimes\bfcg(1)} D(\bfz)\Big).
\end{equation}
Combining the previous estimates, we see that $|A|\meg(\alpha/10) |W(\ell)\cap\suc_W(\tilde{w})|$. 

\subsubsection*{Step 3: defining the family $\{f_\bfz :\bfz\in\otimes\bfcz(m+1)\}$}

For every $\bfz\in\otimes\bfcz(m+1)$ define $f_\bfz: \otimes\bfcs\times \mathcal{W}\to [0,1]$ by
\begin{equation}\label{nw77}
f_\bfz(\bfs, w)=\dens\big( D\big(\Pi_{\bfz}(\bfs)\big) \ | \ w\big).
\end{equation}
Notice for every $\bfz\in\otimes\bfcz(m+1)$ we have
\begin{equation} \label{neqalph}
\mathbb{E}_{w\in \mathcal{W}}\mathbb{E}_{n<h(\bfcs)}\mathbb{E}_{\bfs\in\otimes\bfcs(n)}f_\bfz(\bfs,w)\meg\alpha.
\end{equation}
Indeed, to verify (\ref{neqalph}) it is enough to show that for every $\bfz\in\otimes\bfcz(m+1)$ and every $\bfs\in \otimes\bfcs$ we have
$\mathbb{E}_{w\in \mathcal{W}}f_\bfz(\bfs,w)\meg\alpha$. So let $\bfz\in\otimes\bfcz(m+1)$ and $\bfs\in \otimes\bfcs$ be arbitrary. Also let
$L(D)=\{\ell_n: 0\mik n<h(\bfcz)\}$ be the level set of $D$. By conditions (c) and (d), we have $\ell_W(\tilde{w})\mik\ell=\ell_m$. Moreover, 
$\Pi_{\bfz}(\bfs)\in\otimes\suc_{\bfcz}(\bfz)$ and so $D\big(\Pi_{\bfz}(\bfs)\big)\subseteq W(l_k)$ for some $k\in \{m+1,...,h(\bfcz)-1\}$. 
Taking into account these remarks and using the fact that the tree $W$ is homogeneous and condition (d), we conclude that
\begin{eqnarray}
\mathbb{E}_{w\in \mathcal{W}}f_\bfz(\bfs,w) & = & \mathbb{E}_{w\in W(\ell+1)\cap\suc_W(\tilde{w})} \dens\big(D\big(\Pi_{\bfz}(\bfs)\big) \ | \ w\big) \\
& = & \dens\big(D\big(\Pi_{\bfz}(\bfs)\big) \ | \ \tilde{w}\big)\meg \alpha. \nonumber
\end{eqnarray}

\subsubsection*{Step 4: defining the family $\{g_\bfcf: \bfcf\in\strong_2(\bfcz,m+1)\}$}

In this step we need, first, to do some preparatory work. Specifically, let $\bfcf=(F_1,...,F_d)\in\strong_2(\bfcz,m+1)$
and $\bfs\in\otimes \bfcs$ be arbitrary. For every $i\in [d]$ let 
$G^{\bfcf}_{\bfs,i}=\{F_i(0)\}\cup \{\pi^i_{\bfz}(\bfs):\bfz\in \otimes \bfcf(1)\}$ and define 
$\mathbf{G}^{\bfcf}_{\bfs}=\big(G^{\bfcf}_{\bfs,1},...,G^{\bfcf}_{\bfs,d}\big)$. It is easy to see that
$\mathbf{G}^{\bfcf}_{\bfs}$ is a vector strong subtree of $\bfcz$ of height $2$ and with $\bfcg^{\bfcf}_{\bfs}(0)=\bfcf(0)$.
In fact, notice that $\mathbf{G}^{\bfcf}_{\bfs}\in\strong_2(\bfcz,n)$ for some  $n\in\{m+1,...,h(\bfcz)-1\}$ depending
only on the length of $\bfs$.

Now, for every $\bfcf\in\strong_2(\bfcz,m+1)$ we define $g_\bfcf: \otimes\bfcs\times \mathcal{W} \to [0,1]$ by
\begin{equation}
g_\bfcf(\bfs, w)=\dens\Big(\bigcap_{\bfz\in\otimes \mathbf{G}^{\bfcf}_{\bfs}(1)} D(\bfz) \ | \ w\Big).
\end{equation}
We isolate, for future use, the following fact: \textit{for every $\bfcf\in\strong_2(\bfcz,m+1)$ with $\bfcf(0)\in\Gamma$ we have}
\begin{equation}\label{neqthe}
\mathbb{E}_{w\in \mathcal{W}} \mathbb{E}_{n<h(\bfcs)} \mathbb{E}_{\bfs\in\otimes\bfcs(n)} g_\bfcf(\bfs,w)<\theta.
\end{equation}
This can be easily checked using condition (e) and arguing as in Step 3.

\subsubsection*{Applying Corollary \ref{comb1}}

Recall that we have already set $\mathcal{W}=W(\ell+1)\cap\suc_W(\tilde{w})$. Let $\mathcal{S}=\otimes \bfcs$, 
$h=h(\bfcs)$ and $\mathcal{S}_n=\otimes\bfcs(n)$ for every $n\in\{0,...,h(\bfcs)-1\}$. Also let $M=|W(\ell)\cap\suc_W(\tilde{w})|$
and enumerate the set $W(\ell)\cap\suc_W(\tilde{w})$ as $\{w_k:k\in [M]\}$. Moreover, set $\mathcal{A}=\{k\in [M]:w_k\in A\}$
and $\mathcal{W}_k=\immsuc_W(w_{k})$ for every $k\in [M]$. It is clear that $\{\mathcal{S}_0,...,\mathcal{S}_{h-1}\}$ and 
$\{\mathcal{W}_1,...,\mathcal{W}_M\}$ are partitions of $\mathcal{S}$ and $\mathcal{W}$ respectively. Since the branching
number of the tree $W$ is $b_{d+1}$, we see that $|\mathcal{W}_k|=b_{d+1}$ for every $k\in [M]$. Also, by the estimate on
the size of the set $A$ obtained in Step 2, we have that $|\mathcal{A}|\meg (\alpha/10)M$.

Next set $p=|\!\otimes\bfcz(m+1)|$. Also let $\mathcal{F}=\{\bfcf\in\strong_2(\bfcz,m+1): \bfcf(0)\in\Gamma\}$ and denote
by $q$ the cardinality of the set $\mathcal{F}$.  Clearly $p=\prod_{i=1}^db_i^{m+1}$ and, by Fact \ref{f21}, $q\mik q_{m}$.
Hence, by (\ref{ee626}), we see that
\begin{equation}
\gamma_0\mik \Big( \frac{\alpha}{12b_{d+1}p} \Big)^4 \text{ and } \lambda\mik \frac{\alpha}{12b_{d+1}q}.
\end{equation}
By the above discussion and taking into account (\ref{neqalph}) and (\ref{neqthe}), we conclude that Corollary \ref{comb1}
can be applied for the maps $\{f_{\bfz}:\bfz\in\otimes\bfcz(m+1)\}$ and $\{g_{\bfcf}:\bfcf\in\mathcal{F}\}$ and the data we
described above.

Assume, first, that part (i) of Corollary \ref{comb1} is satisfied. Therefore, there exist $\bfz_0\in\otimes\bfcz(m+1)$, 
$w_0\in W(\ell+1)\cap\suc_W(\tilde{w})$ and $\mathcal{N}_0\subseteq \{0,...,h(\bfcs)-1\}$ with $|\mathcal{N}_0|\meg \varrho^3 h(\bfcs)$ such that
\begin{equation} \label{b76}
|\Delta_0\cap\otimes\bfcs(n)|\meg \varrho^3|\otimes\bfcs(n)|
\end{equation}
for every $n\in\mathcal{N}_0$, where $\Delta_0=\big\{\bfs\in\otimes\bfcs:f_{\bfz_0}(\bfs,w_0)\meg\beta+\varrho^2/2\big\}$. Notice that
\begin{eqnarray} \label{estimate-n_0}
\mathcal{N}_0 & \meg & \varrho^3h(\bfcs)= \varrho^3 \big(h(\bfcz)-(m+1)\big) \\
& \stackrel{(\ref{ne627})}{\meg} & \udhl\big(\underbrace{b_1,...,b_1}_{b_1^{m+1}-\mathrm{times}}, ...,
\underbrace{b_d,...,b_d}_{b_d^{m+1}-\mathrm{times}}|N,\varrho^3\big).\nonumber
\end{eqnarray}
By (\ref{neqhei}), (\ref{b76}), (\ref{estimate-n_0}) and Theorem \ref{uniform dhl}, there exists a vector strong subtree
$\bfcs'$ of $\bfcs$ with $h(\bfcs')=N$ and such that $\otimes\bfcs'\subseteq \Delta_0$. We set 
$\bfcz'=\big(\pi^1_{\bfz_{0}}(\otimes\bfcs'),...,\pi^d_{\bfz_{0}}(\otimes\bfcs')\big)$ and $w'=w_0$. Notice that $\bfcz'$
is a vector strong subtree of $\bfcz$ of height $N$. Moreover, the inclusion $\otimes\bfcs'\subseteq \Delta_0$ and the
definition of $\Delta_0$ yield that the level selection $D$ is $(w',\bfcz',\beta+\varrho^2/2)$-dense. So this case implies
part (I) of the lemma.

Now assume that part (ii) of Corollary \ref{comb1} holds true. As the reader might have already guess, we will show 
that part (II) of the lemma is satisfied. Specifically, by our assumption, there exist $w''\in A$ and  
$\mathcal{N}^{*}\subseteq \{0,...,h-1\}$ with $|\mathcal{N}^{*}|\meg \varrho^3 h(\bfcs)$ such that, setting 
$\alpha'=\alpha-\gamma_0-\gamma_1-\gamma_2$ and defining $\Delta^*\subseteq \otimes\bfcs$ by the rule
\begin{eqnarray} \label{delta*}
\bfs\in \Delta^{*} & \Leftrightarrow & f_{\bfz}(\bfs,w)\meg \alpha' \text{ and } g_\bfcf(\bfs,w)\mik \theta \lambda^{-3} 
\text{ for every } \bfcf\in \mathcal{F}, \\
& & \text{every } \bfz\in \otimes\bfcz(m+1) \text{ and every } w\in\immsuc_W(w'') \nonumber
\end{eqnarray}
we have $|\Delta^{*}\cap \mathcal \otimes\bfcs(n)|\meg \varrho^3 h(\bfcs)$ for every $n\in\mathcal{N}^{*}$. 
Arguing precisely as above and using Theorem \ref{uniform dhl}, we see that there exists a vector strong subtree
$\bfcs''$ of $\bfcs$ with $h(\bfcs'')=N$ and such that $\otimes\bfcs''\subseteq \Delta^*$. For every $i\in [d]$ let
\begin{equation} Z''_i=(Z_i\upharpoonright m) \cup \{\pi^i_{\bfz}(\otimes\bfcs''):\bfz\in\otimes\bfcz(m+1)\big\} \end{equation}
and set $\bfcz''=(Z''_1,...,Z''_d)$. It is easy to check that $\bfcz''$ is a vector strong subtree of $\bfcz$ with
$\bfcz''\upharpoonright m= \bfcz\upharpoonright m$ and $h(\bfcz'')=(m+1)+N$. Also we set
\begin{equation} B=\{\bfz\in\otimes\bfcz(m):w''\in D(\bfz)\}. \end{equation}
Since $w''\in A\subseteq C$, by  (\ref{ee920}), we have $|B|\meg (\alpha/2)|\!\otimes \bfcz(m)|$. We will show that
the node $w''$, the vector strong subtree $\bfcz''$ and the set $B$ satisfy (II1), (II2) and (II3).

By the definition of the set $B$ we have $w''\in \bigcap_{\bfz\in B}D(\bfz)$. On the other hand, $w''\in A$. 
Thus, by the choice of the set $A$ in (\ref{ee92199}), we see that $w''\notin \bigcap_{\bfz\in\otimes\bfcg(1)}D(\bfz)$ 
for every $\bfcg \in\strong_2(\bfcz,m)$ with $\bfcg(0)\in \Gamma$. Therefore, part (II1) is satisfied. Next recall that
$\otimes\bfcs''\subseteq\Delta^*$. Hence, $f_{\bfz}(\bfs,w)\meg \alpha'$  for every $\bfz\in\otimes\bfcz(m+1)$, every
$\bfs\in\otimes\bfcs''$ and every $w\in\immsuc_W(w'')$. This is equivalent to say that the level selection $D$ is 
$(w'',\suc_{\bfcz''}(\bfz),\alpha-\gamma_0-\gamma_1-\gamma_2)$-strongly dense for every $\bfz\in\otimes\bfcz''(m+1)$. 
So, part (II2) is also satisfied. To verify part (II3), fix $\bfcg\in\strong_2(\bfcz'',n)$ for some 
$n\in\{m+1,..., h(\bfcz'')-1\}$ with $\bfcg(0)\in\Gamma$. Observe that there exist a unique $\bfcf\in \strong_2(\bfcz, m+1)$
with $\bfcf(0)=\bfcg(0)$ and a (not necessarily unique) $\bfs\in \otimes\bfcs''$ such that $\bfcg=\bfcg^\bfcf_\bfs$. Hence,
for every $w\in\immsuc_W(w'')$ we have
\begin{equation} \label{end-of-corollary}
\dens\Big( \bigcap_{\bfz\in\otimes\mathbf{G}(1)} D(\bfz) \ | \ w\Big)= 
\dens\Big( \bigcap_{\bfz\in\otimes\mathbf{G}^{\bfcf}_{\bfs}(1)} D(\bfz) \ | \ w\Big)=g_\bfcf(\bfs, w).
\end{equation}
Moreover, since $\otimes\bfcs''\subseteq\Delta^*$ we have $g_\bfcf(\bfs,w)<\theta\lambda^{-3}$ for every $w\in\immsuc_W(w'')$.
Combining the previous remarks we conclude that the pair $(\otimes\bfcg(1),w'')$ is strongly $(\theta\lambda^{-3})$-negligible
with respect to $D$, and so, part (II3) is satisfied. The proof of Lemma  \ref{c67} is thus completed.
\end{proof}
The following lemma is the second main result of this section.
\begin{lem} \label{c72}
Let $d\in\nn$ with $d\meg 1$ and $b_1,...,b_d, b_{d+1}\in\nn$ with $b_i\meg 2$ for all $i\in [d+1]$. Also let $m\in\nn$
and $0<\theta\mik 1$. Assume that we are given
\begin{enumerate}
\item[(a)] a homogeneous tree $W$ with $b_W=b_{d+1}$,
\item[(b)] a finite vector homogeneous tree $\bfcz=(Z_1,...,Z_d)$ with $b_{\bfcz}=(b_1,...,b_d)$,
\item[(c)] a level selection $D:\otimes\bfcz\to\mathcal{P}(W)$,
\item[(d)] a nonempty subset $B$ of $\otimes\bfcz(m)$ and a node $w\in W$ with $w\in\bigcap_{\bfz\in B} D(\bfz)$.
\end{enumerate}
Finally, let $N\in\nn$ with $N\meg 1$ and suppose that
\begin{equation} \label{ne76}
h(\bfcz) \meg (m+1)+ \mil\big(\underbrace{b_1,...,b_1}_{b_1-\mathrm{times}}, ...,
\underbrace{b_d,...,b_d}_{b_d-\mathrm{times}}|N, 1, b^{(\prod_{i=1}^d b_i)^m}\big).
\end{equation}
Then, either
\begin{enumerate}
\item[(i)] there exists $\bfcf\in\strong_2(\bfcz)$ with $\bfcf(0)\in B$   such that the pair $(\bfcf, w)$ is strongly
$\theta$-correlated with respect to $D$, or 
\item[(ii)] there exist a vector strong subtree $\bfcz'$ of $\bfcz$ with $\bfcz'\upharpoonright m=\bfcz\upharpoonright m$ 
and $h(\bfcz')=(m+1)+N$, a subset $\Gamma$ of $B$ with $|\Gamma|\meg (1/b_{d+1})|B|$ and $p_0\in\{0,...,b_{d+1}-1\}$
such that the pair $(\otimes\bfcg(1), w^{\con_W}\!p_0)$ is $\theta$-negligible with respect to $D$ for every 
$\bfcg\in\strong_2(\bfcz')$ with $\bfcg(0)\in\Gamma$.
\end{enumerate}
\end{lem}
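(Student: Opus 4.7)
I proceed by contrapositive: assume that alternative (i) fails, and deduce (ii) by encoding the failure of strong correlation as a single coloring on a reference vector homogeneous tree and then invoking Corollary \ref{c24}.

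First I translate the failure of (i). Since $w \in D(\bfz)$ for every $\bfz \in B$ by hypothesis (d), condition (1) of Definition \ref{id4} is automatic for every $\bfcf \in \strong_2(\bfcz)$ with $\bfcf(0) \in B$. Hence the failure of strong $\theta$-correlation for the pair $(\bfcf, w)$ is equivalent to the existence of some $p \in \{0,\ldots,b_{d+1}-1\}$ for which $(\otimes\bfcf(1), w^{\con_W}\!p)$ is $\theta$-negligible with respect to $D$.

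Next I set up the coloring. Fix any $\bfz_0 \in \otimes\bfcz(m)$ and let $\bfcr = \suc_{\bfcz}(\bfz_0)$, a vector homogeneous tree of height $h(\bfcz) - m$ whose branching pattern agrees with that of $\bfcz$. For every $\bfz \in \otimes\bfcz(m)$, let $\bfci_\bfz \colon \bfcr \to \suc_{\bfcz}(\bfz)$ denote the vector canonical isomorphism from \S 2.5, which restricts to a bijection between $\strong^0_2(\bfcr)$ and $\{\bfcf \in \strong_2(\bfcz):\bfcf(0)=\bfz\}$. Define
\[
\chi \colon \strong^0_2(\bfcr) \to \{0,\ldots,b_{d+1}-1\}^{\otimes\bfcz(m)}
\]
by letting $\chi(\bfcu)(\bfz)$ be the minimal $p$ for which $(\otimes\bfci_\bfz(\bfcu)(1), w^{\con_W}\!p)$ is $\theta$-negligible whenever $\bfz \in B$ (well-defined by the previous paragraph), and $0$ otherwise. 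The color range has size at most $b_{d+1}^{|\otimes\bfcz(m)|} = b_{d+1}^{(\prod_{i=1}^d b_i)^m}$, which is exactly the number of colors in \eqref{ne76}.

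Finally, apply Ramsey and pigeonhole and assemble the output. The identity $h(\bfcr) = h(\bfcz) - m$ together with \eqref{ne76} is precisely what Corollary \ref{c24} requires, with parameters $k=1$, $m=N$ (in its notation), and $r$ as above; it supplies $\bfcr' \in \strong^0_{N+1}(\bfcr)$ on which $\chi$ is constant, i.e.\ fixed values $(p_\bfz)_{\bfz \in \otimes\bfcz(m)}$. A pigeonhole on $\bfz \mapsto p_\bfz$ restricted to the nonempty set $B$ then produces $p_0 \in \{0,\ldots,b_{d+1}-1\}$ and $\Gamma := \{\bfz \in B : p_\bfz = p_0\}$ with $|\Gamma| \geq |B|/b_{d+1}$. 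Writing $\bfcr' = (R'_1,\ldots,R'_d)$ and denoting by $\ci^{(i)}_z \colon R_i \to \suc_{Z_i}(z)$ the canonical isomorphism on the $i$-th coordinate, set $Z'_i = (Z_i\upharpoonright m) \cup \bigcup_{z \in Z_i(m)} \ci^{(i)}_z(R'_i)$ and $\bfcz' = (Z'_1,\ldots,Z'_d)$. Since $\bfcr'$ is a vector strong subtree of $\bfcr$ with a common level set across coordinates, a direct verification shows that $\bfcz'$ is a vector strong subtree of $\bfcz$ with $\bfcz'\upharpoonright m = \bfcz\upharpoonright m$, $h(\bfcz') = (m+1)+N$, and $\suc_{\bfcz'}(\bfz) = \bfci_\bfz(\bfcr')$ for every $\bfz \in \otimes\bfcz(m)$. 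Consequently any $\bfcg \in \strong_2(\bfcz')$ with $\bfcg(0) =: \bfz \in \Gamma$ equals $\bfci_\bfz(\bfcu)$ for a unique $\bfcu \in \strong^0_2(\bfcr')$, and the constancy of $\chi$ on $\strong^0_2(\bfcr')$ forces $(\otimes\bfcg(1), w^{\con_W}\!p_0)$ to be $\theta$-negligible. The main obstacle is marshalling the many local choices of bad coordinate $p$ (one per pair $(\bfz,\bfcu)$) into a single global coloring on $\strong^0_2(\bfcr)$; the vector canonical isomorphisms are precisely what make this reduction possible, at the acceptable cost of a color count that matches \eqref{ne76}.
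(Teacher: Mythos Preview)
Your proof is correct and follows essentially the same route as the paper: assume (i) fails, use canonical isomorphisms to encode the resulting ``bad direction'' choices as a single coloring of $\strong^0_2$ of a reference tree, apply Corollary~\ref{c24}, then pigeonhole on the constant color to extract $\Gamma$ and $p_0$, and finally rebuild $\bfcz'$ coordinatewise. The only cosmetic differences are that the paper uses the abstract model $(b_1^{<h},\dots,b_d^{<h})$ as the reference tree and colors over $B$ rather than all of $\otimes\bfcz(m)$, whereas you use $\suc_{\bfcz}(\bfz_0)$ and pad the coloring by $0$ outside $B$; both choices fit the color bound in \eqref{ne76} and lead to the same conclusion.
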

\begin{proof}
Let
\begin{equation} \mathcal{G}=\{\bfcg\in \strong_2(\bfcz):\bfcg(0)\in B\}. \end{equation}
By condition (d), we see that $w\in D(\bfcg(0))$ for every $\bfcg\in\mathcal{G}$. Assume that part (i) of the lemma
is not satisfied. This has, in particular, the following consequence.
\medskip

\noindent \textbf{(H):} \textit{for every $\bfcg\in\mathcal{G}$ there exists $p\in\{0,...,b_{d+1}-1\}$, depending
possibly on the choice of $\mathbf{G}$, such that the pair $(\otimes\bfcg(1), w^{\con_W}\!p)$ is $\theta$-negligible
with respect to $D$.}
\medskip

\noindent We will use hypothesis \textbf{(H)} to show that part (ii) is satisfied. To this end we argue as follows.
We set $h=h(\bfcz)-(m+1)$. For every $i\in [d]$ and every $z\in Z_i(m)$ the finite homogeneous trees $b_i^{<h}$ and 
$\suc_{Z_i}(z)$ have the same branching number and the same height. Therefore, as we described in \S 2.5, we may consider
the canonical isomorphism $\ci_z:b_i^{<h}\to \suc_{Z_i}(z)$. Notice that the canonical isomorphism $\ci_z$ induces a map 
$\Phi_z:\strong^0_2(b_i^{<h})\to \strong^0_2 \big(\suc_{Z_i}(z)\big)$ defined by
\begin{equation} \Phi_z(F)=\{z\}\cup \{\ci_z(u):u\in F(1)\}. \end{equation}
These remarks can, of course, be extended to the higher-dimensional setting. Specifically, set $\bfcu=(b_1^{<h},...,b_d^{<h})$
and let $\bfz=(z_1,...,z_d)\in\otimes\bfcz(m)$ be arbitrary. Define $\Phi_{\bfz}:\strong^0_2(\bfcu)\to\strong^0_2\big(\suc_{\bfcz}(\bfz)\big)$ by
\begin{equation} \label{e-section6-1}
\Phi_{\bfz}\big((F_1,...,F_d)\big)= \big(\Phi_{z_1}(F_1),...,\Phi_{z_d}(F_d)\big).
\end{equation}
Notice that for every $\bfcf\in\strong^0_2(\bfcu)$ and every $\bfz\in B$ we have that $\Phi_{\bfz}(\bfcf)\in\mathcal{G}$. 
This observation and hypothesis \textbf{(H)} isolated above permit us to define a coloring $c:\strong^0_2(\bfcu)\to \{0,...,b_{d+1}-1\}^B$
by the rule
\begin{eqnarray} \label{e-section6-2}
c(\bfcf)=(p_{\bfz})_{\bfz\in B} & \Leftrightarrow & p_{\bfz}=\min\{ p: \text{\textbf{(H)} is satisfied for } \Phi_{\bfz}(\bfcf) \text{ and } p\} \\
& & \text{for every } \bfz\in B. \nonumber
\end{eqnarray}
Next observe that $b_{\bfcu}=(b_1,...,b_d)$ and $|B|\mik |\!\otimes\bfcz(m)|=\prod_{i=1}^d b_i^m$. Moreover,
\begin{equation} \label{e-section6-3}
h(\bfcu)=h=h(\bfcz)-(m+1) \stackrel{(\ref{ne76})}{\meg} 
\mathrm{Mil}\big(\underbrace{b_1,...,b_1}_{b_1-\mathrm{times}}, ..., \underbrace{b_d,...,b_d}_{b_d-\mathrm{times}}|N,1, b^{|B|}\big).
\end{equation}
Therefore, by Corollary \ref{c24}, it is possible to find $\bfcu'=(U_1',...,U_d')\in\strong^0_N(\bfcu)$ and 
$(p_{\bfz})_{\bfz\in B}\in \{0,...,b_{d+1}-1\}^B$ such that $c(\bfcf)=(p_{\bfz})_{\bfz\in B}$ for every $\bfcf\in\strong^0_2(\bfcu')$.
By the classical pigeonhole principle, there exist a subset $\Gamma$ of $B$ with $|\Gamma|\meg (1/b_{d+1})|B|$ and
$p_0\in\{0,...,b_{d+1}-1\}$ such that $p_{\bfz}=p_0$ for every $\bfz\in\Gamma$. For every $i\in [d]$ let
\begin{equation} \label{e-section6-3new}
Z_i'= (Z_i\upharpoonright m) \cup \bigcup_{z\in Z_i(m)} \ci_z(U'_i)
\end{equation}
and define $\bfcz'=(Z_1',...,Z_d')$. We will show that with these choices part (ii) of the lemma is satisfied. 
Indeed, notice first that $\bfcz'$ is a strong subtree of $\bfcz$ of height $(m+1)+N$ and with 
$\bfcz'\upharpoonright m=\bfcz\upharpoonright m$. Now let $\bfcg\in\strong_2(\bfcz')$ with $\bfcg(0)\in\Gamma$ be arbitrary. 
Observe that there exists $\bfcf\in\strong^0_2(\bfcu')$ such that $\bfcg=\Phi_{\bfcg(0)}(\bfcf)$. Since $\bfcg(0)\in\Gamma$
we have $p_{\bfcg(0)}=p_0$ and so the pair $(\otimes\bfcg(1), w^{\con_W}\!p_0)$ is $\theta$-negligible with respect to $D$.
The proof of Lemma \ref{c72} is thus completed.
\end{proof}


\section{Proof of Theorem \ref{it5}}

\numberwithin{equation}{section}

In this section we complete the proof of Theorem \ref{it5} following the analysis outlined in \S 5. 
It is organized as follows.  In \S 7.1 we define certain numerical parameters. In \S 7.2 we state the main step
towards the proof of Theorem \ref{it5}, Lemma \ref{l91} below. The proof of Lemma \ref{l91} occupies the bulk
of this section and is given in \S 7.3. The proof of Theorem \ref{it5} is then completed in \S 7.4. 
\textit{Finally let $d\in\nn$ with $d\meg 1$, $b_1,...,b_d,b_{d+1}\in\nn$ with $b_i\meg 2$ for all $i\in [d+1]$
and $0<\ee\mik 1$. These data will be fixed throughout this section}.

\subsection{Defining certain parameters}

We set
\begin{equation} \label{e91}
K=\udhl\big(b_1,...,b_d|2,\ee/(4b_{d+1})\big) \text{ and } r=\Big( \frac{\ee}{48b_{d+1}(\prod_{i=1}^d b_i)^{K}}\Big)^{2^{3K-1}}.
\end{equation}
Recall that $K$ is the number of iterations of the algorithm described in \S 5. On the other hand, the quantity
$r$ will be used to control the density increment. Also let
\begin{equation} \label{e93}
Q=\frac{\big( \prod_{i=1}^d b_i^{b_i} \big)^{K} -\big( \prod_{i=1}^d b_i \big)^{K}}{\prod_{i=1}^d b_i^{b_i} -\prod_{i=1}^d b_i}
\end{equation}
and define
\begin{equation} \label{eth}
\theta(b_1,...,b_d,b_{d+1}|\ee)=\Big(\frac{\ee}{24 b_{d+1}Q}\Big)^{3 K-2}.
\end{equation}
Next let $f_1, f_2:\nn\to\nn$ be defined by
\begin{equation} \label{e95}
f_1(n) =  \lceil 1/r^3\rceil \udhl\big( \underbrace{b_1,...,b_1}_{b_1^{K}-\mathrm{times}}, ..., 
\underbrace{b_d,...,b_d}_{b_d^{K}-\mathrm{times}}|n,r^3\big)
\end{equation}
and
\begin{equation} \label{e96}
f_2(n) = \mil\big(\underbrace{b_1,...,b_1}_{b_1-\mathrm{times}}, ..., 
\underbrace{b_d,...,b_d}_{b_d-\mathrm{times}}|n, 1,b^{(\prod_{i=1}^d b_i)^{K-1}}\big)
\end{equation}
for every integer $n\meg 1$, while $f_1(0)=f_2(0)=0$. Finally, define $f:\nn\to\nn$ by
\begin{equation} \label{e98}
f(n)= (f_1\circ f_2)(n)+1
\end{equation}
for every $n\in\nn$.

\subsection{The main dichotomy}

We have the following.
\begin{lem} \label{l91}
Let $\bfct$ be a finite vector homogeneous tree with $b_{\bfct}=(b_1,...,b_d)$, $W$ a homogeneous tree with
$b_W=b_{d+1}$ and $D:\otimes\bfct\to\mathcal{P}(W)$ a level selection of density $\ee$. Let $N\in\nn$ with $N\meg 1$
and assume that
\begin{equation} \label{e911}
h(\bfct)\meg f^{(K)}(N).
\end{equation}
Then, either
\begin{enumerate}
\item[(i)] there exist a vector strong subtree $\bfct'$ of $\bfct$ with $h(\bfct')=N$ and $w'\in W$ such that
the level selection $D$ is $(w',\bfct',\ee+r^2/2)$-dense, or
\item[(ii)] there exist $\bfcf\in\strong_2(\bfct)$ and $w\in W$ such that the pair $(\bfcf, w)$ is strongly
$\theta(b_1,...,b_d,b_{d+1}|\ee)$-correlated with respect to $D$.
\end{enumerate}
\end{lem}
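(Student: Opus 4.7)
The plan is to run a greedy algorithm for at most $K$ iterations; at each step we either produce alternative (i) or (ii) of the lemma or extract additional structural information and proceed, and after $K$ unsuccessful iterations, Theorem \ref{uniform dhl} forces a contradiction. Inductively, for $n = 0, 1, \ldots, K$, I would construct a vector strong subtree $\bfcz_n$ of $\bfct$ with $h(\bfcz_n) \meg f^{(K-n)}(N)$, a node $\tilde{w}_n \in W$, subsets $B_i' \subseteq \otimes\bfcz_n(i-1)$ of density at least $\ee/(4b_{d+1})$ for $i \in [n]$, the union $\Gamma_n = B_1' \cup \cdots \cup B_n'$, and positive reals $\ee_n \in [\ee - r/2, \ee]$ and $\theta_n$, subject to the invariants: $D$ is $(\tilde{w}_n, \suc_{\bfcz_n}(\bfz), \ee_n)$-strongly dense for every $\bfz \in \otimes\bfcz_n(n)$; and $(\otimes\bfcg(1), \tilde{w}_n)$ is strongly $\theta_n$-negligible for every $\bfcg \in \strong_2(\bfcz_n)$ with $\bfcg(0) \in \Gamma_n$ and $\otimes\bfcg(1) \subseteq \otimes\bfcz_n(m)$ for some $m \meg n$. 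The base case $n = 0$ is immediate with $\Gamma_0 = \varnothing$.

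For the step $n \to n+1$, first apply Lemma \ref{c67} with $\alpha = \ee_n$, $\beta = \ee$, $\varrho = r$, $m = n$, $\theta = \theta_n$, and $\lambda$ chosen to fulfil \eqref{ee626}; if part (I) triggers, we obtain alternative (i) of Lemma \ref{l91}. Otherwise part (II) supplies $w_n''$, $\bfcz_n''$, and $B \subseteq \otimes\bfcz_n''(n)$ with $|B| \meg (\ee_n/2)|\!\otimes\bfcz_n''(n)|$, $w_n'' \in \bigcap_{\bfz \in B} D(\bfz)$, and the key exclusion of part (II1): $w_n'' \notin \bigcap_{\bfz \in \otimes\bfcg(1)} D(\bfz)$ for every $\bfcg \in \strong_2(\bfcz_n'', n)$ with $\bfcg(0) \in \Gamma_n$. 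Now apply Lemma \ref{c72} with the node $w_n''$, the set $B$, and input parameter $\theta(b_1,\ldots,b_d,b_{d+1}|\ee)$; its part (i) yields alternative (ii) of Lemma \ref{l91}, while its part (ii) provides $p_0$, $\bfcz_{n+1}$, and $B_{n+1}' \subseteq B$ of density at least $\ee/(4b_{d+1})$. Set $\tilde{w}_{n+1}$ to be the $p_0$-th immediate successor of $w_n''$ in $W$, $\Gamma_{n+1} = \Gamma_n \cup B_{n+1}'$, $\ee_{n+1} = \ee_n - \gamma_0 - \gamma_1 - \gamma_2$, and $\theta_{n+1} = \max\bigl(\theta_n \lambda^{-3},\, \theta(b_1,\ldots,b_d,b_{d+1}|\ee)\bigr)$. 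The strong-denseness invariant follows from part (II2) of Lemma \ref{c67}; the negligibility invariant at step $n+1$ combines part (II3) of Lemma \ref{c67} (for $\bfcg(0) \in \Gamma_n$) with the negligibility from Lemma \ref{c72} (for $\bfcg(0) \in B_{n+1}'$).

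If all $K$ iterations complete, then $\Gamma_K = \bigcup_{i=1}^K B_i'$ lies in $\otimes\bfcz_K$, with $B_i' \subseteq \otimes\bfcz_K(i-1)$ of density at least $\ee/(4b_{d+1})$ (each $B_i'$ is placed at level $i-1$ of $\bfcz_i$ and preserved thereafter since $\bfcz_{j+1} \upharpoonright j = \bfcz_j \upharpoonright j$ at every subsequent iteration $j \meg i$). By the choice of $K$ and Theorem \ref{uniform dhl}, there exists $\bfcf \in \strong_2(\bfcz_K)$ with $\otimes\bfcf \subseteq \Gamma_K$, forcing $\bfcf(0) \in B_{i_1}'$ and $\otimes\bfcf(1) \subseteq B_{i_2}'$ for some $1 \mik i_1 < i_2 \mik K$. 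Setting $n = i_2 - 1$, we have $\bfcf \in \strong_2(\bfcz_n'', n)$ and $\bfcf(0) \in \Gamma_n$, so the exclusion property yields $w_n'' \notin \bigcap_{\bfz \in \otimes\bfcf(1)} D(\bfz)$; but $\otimes\bfcf(1) \subseteq B_{i_2}' \subseteq B$ and $w_n'' \in \bigcap_{\bfz \in B} D(\bfz)$ give $w_n'' \in \bigcap_{\bfz \in \otimes\bfcf(1)} D(\bfz)$, a contradiction.

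The main obstacle is the parameter bookkeeping. The quantities $r$, $\lambda$, and the sequences $(\ee_n)$, $(\theta_n)$ must be chosen so that every numerical hypothesis of Lemmas \ref{c67} and \ref{c72}---in particular, \eqref{ee626} and the constraint $\theta_n \mik 2\ee_n/(5q_n)$ required by condition (d) of Lemma \ref{c67}---holds at each of the $K$ steps, while the accumulated growth $\theta_K \sim \theta(b_1,\ldots,b_d,b_{d+1}|\ee)\, \lambda^{-3K}$ remains admissible; the definitions \eqref{e91}--\eqref{eth} are calibrated precisely for this. The iterated height bound $f^{(K)}(N)$ of \eqref{e911} then absorbs, at each of the $K$ iterations, the height cost $f_1$ (coming from the application of $\udhl$ inside Lemma \ref{c67}) and the height cost $f_2$ (coming from the application of Milliken's theorem inside Lemma \ref{c72}).
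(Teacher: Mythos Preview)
Your proposal is essentially the paper's own proof: iterate Lemma~\ref{c67} and then Lemma~\ref{c72} for $K$ steps, maintain the analogues of conditions (C1)--(C6), and at the end apply Theorem~\ref{uniform dhl} to $\Gamma_K$ to produce $\bfcf$ with $\bfcf(0)\in B'_{i_1}$ and $\otimes\bfcf(1)\subseteq B'_{i_2}$, contradicting the exclusion clause (II1) at step $i_2-1$.

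Two small bookkeeping slips to fix. First, your running invariants should read ``$(\tilde{w}_n,\suc_{\bfcz_n}(\bfz),\ee_n)$-\emph{dense}'' and ``$\theta_n$-\emph{negligible}'' rather than the \emph{strong} versions: Lemma~\ref{c67} outputs strong denseness and strong negligibility relative to $w''_n$, but once you pass to the immediate successor $\tilde{w}_{n+1}=w''_n{}^{\con_W}p_0$ only the plain (non-strong) properties survive at $\tilde{w}_{n+1}$---and those are exactly what hypotheses (d) and (e) of Lemma~\ref{c67} require at the next step (compare the paper's (C3) and (C4)). Second, the height invariant must carry the additive $n$, i.e.\ $h(\bfcz_n)\meg n+f^{(K-n)}(N)$ (in fact equality holds), so that hypothesis~\eqref{ne627} with $m=n$ is met; your stated bound $h(\bfcz_n)\meg f^{(K-n)}(N)$ is off by $n$. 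Also, the range for $\ee_n$ is $[\ee/2,\ee]$ rather than $[\ee-r/2,\ee]$, since already $\delta_0=r$.
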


\subsection{Proof of Lemma \ref{l91}}

Before we proceed to the details we need, first, to do some preparatory work. For notational convenience
we shall denote the parameter $\theta(b_1,...,b_d,b_{d+1}|\ee)$  simply by $\theta$. We set
\begin{equation} \label{equation lambda}
\lambda=\frac{\ee}{24b_{d+1}Q}
\end{equation}
where $Q$ is defined in (\ref{e93}). Also, for every $n\in [K]$ let
\begin{equation}\label{thseq}
\theta_n=\theta\lambda^{-3(n-1)}.
\end{equation}
Notice that $\theta_1=\theta$ and $\theta_{n+1}=\theta_n\lambda^{-3}$ for every $n\in[K-1]$. Moreover, 
recursively we define two finite sequences $(\delta_n)_{n=0}^{3K-1}$ and $(\ee_n)_{n=0}^{K}$ of reals by the rule
\begin{equation} \label{e99}
\left\{ \begin{array} {l} \delta_0=r, \\ \delta_{n+1} =(\delta_n+\delta_n^2)^{1/2} \end{array}  \right. \text{ and } \ \
\left\{ \begin{array} {l} \ee_0=\ee, \\ \ee_{n+1} =\ee_n-(\delta_{3n}+\delta_{3n+1}+\delta_{3n+2}). \end{array}  \right.
\end{equation}
We will need the following elementary properties satisfied by these sequences.
\begin{enumerate}
\item[($\mathcal{P}$1)] For every $n\in\{0,...,3K-1\}$ we have $\delta_n\mik 2 r^{2^{-n}}$.
\item[($\mathcal{P}$2)] For every $n\in\{0,...,3K-2\}$ we have $\sum_{i=0}^{n} \delta_i+r^2 =\delta_{n+1}^2$.
\item[($\mathcal{P}$3)] We have $\sum_{n=0}^{3K-1} \delta_n \mik \ee/2$.
\item[($\mathcal{P}$4)] For every $n\in\{0,...,K\}$ we have $\ee_{n}=\ee-\sum_{i=0}^{3n-1} \delta_i$.
\item[($\mathcal{P}$5)] For every $n\in\{0,...,K\}$ we have $\ee/2 \mik \ee_n\mik \ee$.
\item[($\mathcal{P}$6)] For every $n\in\{0,...,K-1\}$ we have
\begin{equation} \label{e910}
\delta_{3n}\mik \delta_{3K-3} \mik 2 \Big( \frac{\ee}{48b_{d+1}(\prod_{i=1}^d b_i)^{K}}\Big)^4 \mik
\Big( \frac{\ee_n}{12b_{d+1}(\prod_{i=1}^d b_i)^{n+1}}\Big)^4.
\end{equation}
\end{enumerate}
The verification of these properties is left to the reader. We notice, however, that properties ($\mathcal{P}$3)
and ($\mathcal{P}$6) follow by the choice of $r$ in \eqref{e91}.

After this preliminary discussion we are ready to proceed to the details. We will argue by contradiction. 
In particular, recursively and assuming that neither (i) nor (ii) are satisfied, we shall construct
\begin{enumerate}
\item[(a)] a finite sequence $(\bfcz_n)_{n=1}^{K}$ of vector strong subtrees of $\bfct$,
\item[(b)] two finite sequences $(w_n)_{n=1}^{K}$  and $(\tilde{w}_n)_{n=1}^{K}$ of nodes of $W$, and
\item[(c)] a finite sequence $(\Gamma_n)_{n=1}^{K}$ of subsets of $\otimes\bfct$
\end{enumerate}
such that, setting $\bfcz_0=\bfct$, the following conditions are satisfied for every $n\in [K]$.
\begin{enumerate}
\item[(C1)] We have $\bfcz_n\upharpoonright n-1=\bfcz_{n-1}\upharpoonright n-1$ and  $h(\bfcz_n)=n+f^{(K-n)}(N)$.
\item[(C2)] We have $\Gamma_n\subseteq \otimes \bfcz_{n-1}(n-1)$ and $|\Gamma_n|\meg (\ee_n/2b_{d+1})|\!\otimes\bfcz_{n-1}(n-1)|$.
\item[(C3)] The level selection $D$ is $(\tilde{w}_n,\suc_{\bfcz_n}(\bfz),\ee_n)$-dense for every $\bfz\in\otimes\bfcz_n(n)$.
\item[(C4)] If $\bfcg\in\strong_2(\bfcz_n,k)$ for some $k\in\{n,...,h(\bfcz_n)-1\}$ with $\bfcg(0)\in\Gamma_1\cup ...\cup\Gamma_n$,
then the pair $(\otimes\bfcg(1),\tilde{w}_n)$ is $\theta_n$-negligible with respect to $D$.
\item[(C5)] We have $w_n\in \bigcap_{\bfz\in\Gamma_n}D(\bfz)$.
\item[(C6)] If $n\meg 2$ and $\bfcg\in\strong_2(\bfcz_{n-1},n-1)$ with $\bfcg(0)\in \Gamma_1\cup ...\cup \Gamma_{n-1}$, 
then $ w_n\notin \bigcap_{\bfz\in\otimes\bfcg(1)}D(\bfz)$.
\end{enumerate}
We will present the general step of the recursive construction. The initial step, that is, the choice of 
$\bfcz_1, w_1,\tilde{w}_1$ and $\Gamma_1$, proceeds similarly taking into account that $D$ is $(W(0),\bfct,\ee_0)$-dense.

So fix some $m\in[K-1]$ and assume that the sequences $(\bfcz_n)_{n=1}^m$, $(w_n)_{n=1}^{m}$, $(\tilde{w}_n)_{n=1}^{m}$ 
and $(\Gamma_n)_{n=1}^{m}$ have been selected so that conditions (C1)-(C6) are satisfied. Notice, first, that $\bfcz_m$
is a vector strong subtree of $\bfct$. Therefore, there exists a unique $\ell_m\in L(D)$ such that $D(\bfz)\subseteq W(\ell_m)$
for every $\bfz\in\otimes\bfcz(m)$. By condition (C3), we see that $\ell_W(\tilde{w}_m)\mik\ell_m$. We set
\begin{equation}
\Gamma^{(m)}=\bigcup_{k=1}^{m}\Gamma_k.
\end{equation}
Invoking conditions (C1) and (C2), we see that $\Gamma^{(m)}\subseteq \otimes\bfcz_m\upharpoonright m-1$. Also let
\begin{equation} \label{neqM}
N_1=f_2\big(f^{(K-m-1)}(N)\big) \text{ and } N_2=f^{(K-m-1)}(N).
\end{equation}
Notice that
\begin{eqnarray} \label{e924}
\ \ \ \ \ \ \ \ h(\bfcz_{m}) & \stackrel{(\mathrm{C1})}{=} & m + f^{(K-m)}(N) \\
& \stackrel{(\ref{e98})}{=} & (m+1) + (f_1\circ f_2) \big( f^{(K-m-1)}(N)\big) \nonumber \\
& \stackrel{(\ref{e95}),(\ref{neqM})}{\meg} & (m+1) + \frac{1}{r^3}\udhl\big(\underbrace{b_1,...,b_1}_{b_1^{m+1}-\mathrm{times}}, ..., 
\underbrace{b_d,...,b_d}_{b_d^{m+1}-\mathrm{times}}|N_1,r^3\big). \nonumber
\end{eqnarray}
Moreover,
\begin{eqnarray} \label{e925}
\gamma_0(\ee_{m},\ee,r) & \stackrel{(\ref{e61})}{=} & (\ee+r^2-\ee_{m})^{1/2} \stackrel{(\mathcal{P}4)}{=} \Big( \sum_{i=0}^{3m-1} \delta_i +r^2 \Big)^{1/2} \\
& \stackrel{(\mathcal{P}2)}{=} & \delta_{3m} \stackrel{(\ref{e910})}{\mik} \Big( \frac{\ee_{m}}{12b_{d+1}(\prod_{i=1}^d b_i)^{m+1}}\Big)^4 \nonumber.
\end{eqnarray}
Finally, by the choices of $Q$ and $\theta$ in (\ref{e93}) and (\ref{eth}) respectively and property ($\mathcal{P}$5), 
we see that
\begin{equation} \label{lth}
\lambda\mik\frac{\ee_m}{12b_{d+1}q(b_1,...,b_d,m)}
\end{equation}
and
\begin{equation} \label{lth-second}
\theta_m\mik\theta_{K}=\frac{\ee}{24b_{d+1}Q}<\frac{2\ee_m}{5b_{d+1}q(b_1,...,b_d,m)}
\end{equation}
where $q(b_1,...,b_d,m)$ is defined in (\ref{e214}).
\begin{claim} \label{main dichotomy-claim1}
There exist a vector strong subtree $\bfcz'$ of $\bfcz_m$ with $\bfcz'\upharpoonright m=\bfcz\upharpoonright m$
and $h(\bfcz')=(m+1)+N_1$, $w\in W(\ell_m)\cap\suc_W(\tilde{w}_m)$ and $B\subseteq\otimes \bfcz_m(m)$ with
$|B|\meg (\ee_m/2)|\!\otimes \bfcz_m(m)|$ satisfying the following properties.
\begin{enumerate}
\item[(1)] We have $w\in \bigcap_{\bfz\in B}D(\bfz)$. On the other hand, $w\notin \bigcap_{\bfz\in\otimes\bfcg(1)}D(\bfz)$
for every $\bfcg \in\strong_2(\bfcz_m,m)$ with 
$\bfcg(0)\in \Gamma^{(m)}$.
\item[(2)] $D$ is $(w,\suc_{\bfcz'}(\bfz),\ee_{m+1})$-strongly dense for every $\bfz\in\otimes\bfcz'(m+1)$.
\item[(3)] If $\bfcg\in\strong_2(\bfcz',k)$ for some $k\in\{m+1,...,h(\bfcz')-1\}$ with $\bfcg(0)\in \Gamma^{(m)}$,
the pair $(\otimes\bfcg(1),w)$ is strongly 
$\theta_{m+1}$-negligible with respect to $D$.
\end{enumerate}
\end{claim}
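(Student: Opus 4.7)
The plan is to invoke Lemma \ref{c67} directly, with the parameters
\[
\alpha=\ee_m,\ \beta=\ee,\ \varrho=r,\ \theta=\theta_m,\ \lambda=\lambda,\ \bfcz=\bfcz_m,\ \ell=\ell_m,\ \tilde{w}=\tilde{w}_m,\ \Gamma=\Gamma^{(m)},\ N=N_1.
\]
Under these identifications, property ($\mathcal{P}$4) gives
$\alpha-\gamma_0-\gamma_1-\gamma_2=\ee_m-\delta_{3m}-\delta_{3m+1}-\delta_{3m+2}=\ee_{m+1}$,
while the recursive definition of $\theta_n$ in \eqref{thseq} gives $\theta\lambda^{-3}=\theta_m\lambda^{-3}=\theta_{m+1}$, and $\beta+\varrho^2/2=\ee+r^2/2$. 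Thus conclusion (II) of Lemma \ref{c67} will output exactly a triple $(w'',\bfcz'',B)$ meeting requirements (1)--(3) of the claim, and conclusion (I) will produce a vector strong subtree $\bfct'$ of $\bfcz_m\subseteq\bfct$ of height $N_1\geqslant N$ (by \eqref{neqM} and monotonicity of $f$) such that $D$ is $(w',\bfct',\ee+r^2/2)$-dense, which matches alternative (i) of Lemma \ref{l91} and therefore contradicts the standing assumption that (i) fails.

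The second step is to confirm that every hypothesis of Lemma \ref{c67} holds. Conditions (a) and (b) are immediate from the construction: $W$ has branching number $b_{d+1}$ and $\bfcz_m$ has branching vector $(b_1,\dots,b_d)$. Condition (c) holds by the very definition of $\ell_m$ as the unique level of $L(D)$ assigned to $\otimes\bfcz_m(m)$. Condition (d) is precisely the content of (C3) at stage $m$, together with the observation $\ell_W(\tilde{w}_m)\leqslant\ell_m$ that follows from denseness. Condition (e) is condition (C4) with $n=m$: every $\bfcg\in\strong_2(\bfcz_m,n)$ (for $n\geqslant m$) with $\bfcg(0)\in\Gamma^{(m)}$ yields a pair $(\otimes\bfcg(1),\tilde{w}_m)$ which is $\theta_m$-negligible. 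The three quantitative constraints $\gamma_0\leqslant(\alpha/12b_{d+1}(\prod b_i)^{m+1})^4$, $\lambda\leqslant\alpha/(12b_{d+1}q_m)$ and $\theta\leqslant 2\alpha/(5q_m)$ in \eqref{ee626} are exactly (7.25) (via the computation $\gamma_0(\ee_m,\ee,r)=\delta_{3m}$ and property ($\mathcal{P}$6)), \eqref{lth} and \eqref{lth-second} respectively. Finally, the height bound \eqref{ne627} translates into (7.24), which is produced directly from (C1) and the definitions \eqref{e95}, \eqref{e98}, \eqref{neqM}.

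With all hypotheses verified, Lemma \ref{c67} yields the dichotomy. Alternative (I) is ruled out by the running assumption of the proof of Lemma \ref{l91}; alternative (II) produces $w''\in W(\ell_m)\cap\suc_W(\tilde{w}_m)$, a vector strong subtree $\bfcz''$ of $\bfcz_m$ with $\bfcz''\upharpoonright m=\bfcz_m\upharpoonright m$ and $h(\bfcz'')=(m+1)+N_1$, and a set $B\subseteq\otimes\bfcz_m(m)$ with $|B|\geqslant(\ee_m/2)|\!\otimes\bfcz_m(m)|$ satisfying exactly the three conditions required: condition (II1) $\Leftrightarrow$ claim (1); condition (II2) combined with $\alpha-\gamma_0-\gamma_1-\gamma_2=\ee_{m+1}$ $\Leftrightarrow$ claim (2); condition (II3) combined with $\theta\lambda^{-3}=\theta_{m+1}$ $\Leftrightarrow$ claim (3). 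Setting $\bfcz'=\bfcz''$ and $w=w''$ completes the proof.

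There is no genuine conceptual obstacle here; all the work has already been done in Lemma \ref{c67} and in the careful choice of the numerical parameters $r$, $\lambda$, $\theta$, $(\delta_n)$, $(\ee_n)$, $(\theta_n)$ in \S 7.1 and at the start of \S 7.3. The only mild bookkeeping point is to double-check that the exponents in property ($\mathcal{P}$6) indeed dominate the $m$-dependent constant $12b_{d+1}(\prod_{i=1}^d b_i)^{m+1}$ uniformly in $m\in\{0,\dots,K-1\}$; this is the reason the parameter $r$ was defined in \eqref{e91} with the worst-case exponent $m=K$, so the dependence is controlled by construction.
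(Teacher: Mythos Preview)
Your proposal is correct and follows essentially the same approach as the paper: both proofs apply Lemma \ref{c67} with the identical parameter assignments $\alpha=\ee_m$, $\beta=\ee$, $\varrho=r$, $\theta=\theta_m$, $\lambda$, $\bfcz=\bfcz_m$, $\tilde{w}=\tilde{w}_m$, $\Gamma=\Gamma^{(m)}$, $N=N_1$, rule out alternative (I) by the standing assumption, and then identify alternative (II) with the claim via the identities $\gamma_i(\ee_m,\ee,r)=\delta_{3m+i}$ and $\theta_m\lambda^{-3}=\theta_{m+1}$. Your verification of the hypotheses is in fact more explicit than the paper's, which simply refers back to \eqref{e924}--\eqref{lth-second}; the only cosmetic point is that the equality $\ee_m-\delta_{3m}-\delta_{3m+1}-\delta_{3m+2}=\ee_{m+1}$ comes directly from the recursion \eqref{e99} rather than from property ($\mathcal{P}$4).
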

\begin{proof}[Proof of Claim \ref{main dichotomy-claim1}]
We will rely on Lemma \ref{c67}. Specifically, let ``$\alpha=\ee_m$", ``$\beta=\ee$", ``$\varrho=r$", $m$ be the
fixed integer, ``$\theta=\theta_m$", $\lambda$ be as in (\ref{equation lambda}), $W$ be our given homogeneous tree,
``$\bfcz=\bfcz_m$", ``$D=D\upharpoonright \otimes\bfcz_m$", ``$\ell=\ell_m$", ``$\tilde{w}=\tilde{w}_m$", 
``$\Gamma=\Gamma^{(m)}$"  and ``$N= N_1$". It is easy to check, using  what we have mentioned before the statement
of the claim and our inductive hypotheses, that Lemma \ref{c67} can be applied for these data. Noticing that $N_1\meg N$,
we see that if the first alternative of Lemma \ref{c67} holds true, then part (i) of Lemma \ref{l91} is satisfied. 
This, of course, contradicts our assumptions. Therefore, part (ii) of Lemma \ref{c67} is satisfied. The proof of the
claim will be completed once we show that
\begin{enumerate}
\item[(a)] $\ee_{m+1}=\ee_m-\gamma_0(\ee_m,\ee,r)-\gamma_1(\ee_m,\ee,r)-\gamma_2(\ee_m,\ee,r)$ and
\item[(b)] $\theta_{m+1}=\theta_m\lambda^{-3}$.
\end{enumerate}
Indeed, the equality in (b) above follows immediately by (\ref{thseq}). Moreover,
\begin{eqnarray} \label{e926}
\gamma_1(\ee_{m},\ee,r) & \stackrel{(\ref{e62})}{=} & \big(\gamma_0(\ee_m,\ee,r)+\gamma_0(\ee_m,\ee,r)^2\big)^{1/2} \\
& \stackrel{(\ref{e925})}{=} & (\delta_{3m}+\delta_{3m}^2)^{1/2}\stackrel{ (\ref{e99})}{=}\delta_{3m+1} \nonumber
\end{eqnarray}
and
\begin{eqnarray} \label{e926new}
\gamma_2(\ee_{m},\ee,r) & \stackrel{(\ref{e63})}{=} & \big(\gamma_1(\ee_m,\ee,r)+\gamma_1(\ee_m,\ee,r)^2\big)^{1/2} \\
& \stackrel{(\ref{e926})}{=} & (\delta_{3m+1}+\delta_{3m+1}^2)^{1/2}\stackrel{(\ref{e99})}{=}\delta_{3m+2}. \nonumber
\end{eqnarray}
Therefore, by the choice of $\ee_{m+1}$ in (\ref{e99}) and equalities (\ref{e926}) and (\ref{e926new}), we conclude
that the equality in (a) above is also satisfied. The proof of Claim \ref{main dichotomy-claim1} is thus completed.
\end{proof}
\begin{claim} \label{main dichotomy-claim2}
Let $\bfcz'$, $w$ and $B$ be as in Claim \ref{main dichotomy-claim1}. Then there exist a vector strong subtree
$\bfcz''$ of $\bfcz'$ with $\bfcz''\upharpoonright m=\bfcz'\upharpoonright m$ and $h(\bfcz'')=(m+1)+N_2$, a subset
$\Gamma$ of $B$ with $|\Gamma|\meg (1/b_{d+1})|B|$ and $p_0\in\{0,...,b_{d+1}-1\}$ such that the pair
$(\otimes\bfcg(1), w^{\con_W}\!p_0)$ is $\theta_{m+1}$-negligible with respect to $D$ for every $\bfcg\in\strong_2(\bfcz'')$
with $\bfcg(0)\in\Gamma$.
\end{claim}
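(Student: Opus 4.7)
The plan is to apply Lemma \ref{c72} directly to the output of Claim \ref{main dichotomy-claim1}. Concretely, I invoke Lemma \ref{c72} with the integer $m$, the threshold $\theta_{m+1}$, the homogeneous tree $W$, the finite vector homogeneous tree $\bfcz'$, the restriction of $D$ to $\otimes\bfcz'$, the subset $B\subseteq \otimes\bfcz_m(m)=\otimes\bfcz'(m)$, and the node $w$; the latter lies in $\bigcap_{\bfz\in B}D(\bfz)$ by part~(1) of Claim \ref{main dichotomy-claim1}. As value of the parameter $N$ in Lemma \ref{c72} I take $N_2=f^{(K-m-1)}(N)$.

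The one computation to verify is the height hypothesis of Lemma \ref{c72}, namely
\[
h(\bfcz')\meg (m+1)+\mil\big(\underbrace{b_1,...,b_1}_{b_1-\mathrm{times}},...,\underbrace{b_d,...,b_d}_{b_d-\mathrm{times}}\big|N_2,1,b^{(\prod_{i=1}^{d}b_i)^m}\big).
\]
By Claim \ref{main dichotomy-claim1} we have $h(\bfcz')=(m+1)+N_1$ with $N_1=f_2(N_2)$. Since $m\mik K-1$, we have $(\prod_{i=1}^{d}b_i)^m\mik (\prod_{i=1}^{d}b_i)^{K-1}$; monotonicity of $\mathrm{Mil}$ in the number of colors, together with \eqref{e96}, then yields
\[
\mil\big(\underbrace{b_1,...,b_1}_{b_1-\mathrm{times}},...,\underbrace{b_d,...,b_d}_{b_d-\mathrm{times}}\big|N_2,1,b^{(\prod_{i=1}^{d}b_i)^m}\big)\mik f_2(N_2)=N_1,
\]
so the required inequality holds.

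Once Lemma \ref{c72} is available, it remains to rule out its alternative (i) so that alternative (ii) supplies exactly the desired $\bfcz''$, $\Gamma$ and $p_0$. If alternative (i) held, one would obtain $\bfcf\in\strong_2(\bfcz')$ with $\bfcf(0)\in B$ such that $(\bfcf,w)$ is strongly $\theta_{m+1}$-correlated with respect to $D$. Since $\lambda<1$, formula \eqref{thseq} gives $\theta_{m+1}=\theta\lambda^{-3m}\meg\theta$, hence $(\bfcf,w)$ is in particular strongly $\theta$-correlated; noting that $\bfcf$ is a vector strong subtree of $\bfct$ by the transitivity of the strong-subtree relation, this would verify part~(ii) of Lemma \ref{l91} and contradict the running assumption that neither of its alternatives is satisfied. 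I do not foresee a substantive obstacle in this step: the real work has already been done in Claim \ref{main dichotomy-claim1} and in Lemma \ref{c72}, and Claim \ref{main dichotomy-claim2} is essentially a bookkeeping application of the latter.
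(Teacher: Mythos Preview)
Your proposal is correct and follows essentially the same route as the paper's own proof: apply Lemma \ref{c72} with the data $(m,\theta_{m+1},W,\bfcz',D\!\upharpoonright\!\otimes\bfcz',B,w,N_2)$, verify the height hypothesis via $h(\bfcz')=(m+1)+f_2(N_2)$ and the monotonicity of $\mil$ in the number of colors, and rule out alternative (i) since $\theta_{m+1}\meg\theta$ would force part (ii) of Lemma \ref{l91}, contrary to the standing assumption. The only differences are cosmetic---you spell out the monotonicity of $\mil$ and the inequality $\theta_{m+1}=\theta\lambda^{-3m}\meg\theta$ a bit more explicitly than the paper does.
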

\begin{proof} [Proof of Claim \ref{main dichotomy-claim2}]
Now we will rely on Lemma \ref{c72}. To this end notice that
\begin{eqnarray} \label{e930}
h(\bfcz') & =& (m+1)+N_1 \stackrel{(\ref{neqM})}{=} (m+1) + f_2( N_2)  \\
& \stackrel{(\ref{e96})}{\meg} & (m + 1)+ \mil\big(\underbrace{b_1,...,b_1}_{b_1-\mathrm{times}}, ..., 
\underbrace{b_d,...,b_d}_{b_d-\mathrm{times}}|N_2, 1,b^{(\prod_{i=1}^d b_i)^{m}}\big). \nonumber
\end{eqnarray}
Moreover, by Claim \ref{main dichotomy-claim1}, we have $B\subseteq \otimes \bfcz_m(m)$ and 
$w\in \bigcap_{\bfz\in B}D(\bfz)$. Therefore, we may apply Lemma \ref{c72} for the fixed integer $m$,
``$\theta=\theta_{m+1}$", our given homogeneous tree $W$, ``$\bfcz=\bfcz'$", ``$D=D\upharpoonright \otimes\bfcz'$", 
the set $B$, the node $w$ and ``$N=N_2$".  The first alternative of Lemma \ref{c72} yields that there exists 
$\bfcf\in\strong_2(\bfcz')$ such that the pair $(\bfcf, w)$ is strongly $\theta_{m+1}$-correlated with respect
to $D$. Noticing that $\theta_{m+1}\meg \theta(b_1,...,b_d,b_{d+1}|\ee)$ and invoking our hypothesis that part
(ii) of Lemma \ref{l91} is not satisfied, we see that the second alternative of Lemma \ref{c72} holds true. 
This readily gives the conclusion of the claim.
\end{proof}
We are in the position to define all necessary data for the general step of the recursive construction.
Specifically, let $w$ be as in Claim \ref{main dichotomy-claim1} and $\bfcz''$, $p_0$ and $\Gamma$ be as in Claim 
\ref{main dichotomy-claim2}. We set ``$\bfcz_{m+1}=\bfcz''$", ``$w_{m+1}=w$", ``$\tilde{w}_{m+1}=w^{\con_W}\!p_0$" 
and  ``$\Gamma_{m+1}=\Gamma$". It is easily seen that with these choices conditions (C1)-(C6) are satisfied. 
The recursive selection is thus completed.

Now we are ready to derive a contradiction. Notice first that, by condition (C1), we have $h(\bfcz_K)=K+N\meg K$. 
We set $\bfcs=\bfcz_{K}\upharpoonright K-1$ and $\mathcal{D}=\Gamma_1\cup...\cup \Gamma_K$. Invoking condition (C1), 
we see that $\bfcs\upharpoonright n=\bfcz_{n}\upharpoonright n$ for every $n\in\{0,...,K-1\}$. Therefore, by condition
(C2), for every $n\in \{0,...,K-1\}$ we have
\begin{equation} \label{endnew1}
|\mathcal{D}\cap\otimes\bfcs(n)|=|\Gamma_{n+1}|\meg (\ee_{n+1}/2b_{d+1})|\!\otimes\bfcs(n)|
\stackrel{(\mathcal{P}5)}{\meg}(\ee/4b_{d+1})|\!\otimes\bfcs(n)|.
\end{equation}
Since $b_{\bfcs}=(b_1,...,b_d)$ and $h(\bfcs)=K$, by the choice of $K$ in (\ref{e91}) and Theorem \ref{uniform dhl},
there exist $\bfcg\in\strong_2(\bfcs)$ and $1\mik n_1< n_2\mik K$ such that 
\begin{equation} \bfcg(0)\in\Gamma_{n_1} \text{ and } \otimes\bfcg(1)\subseteq \Gamma_{n_2}. \end{equation}
The inclusion $\otimes\bfcg(1)\subseteq \Gamma_{n_2}$ implies, in particular, that
\begin{equation} \bfcg\in\strong_2(\bfcs, n_2-1)=\strong_2(\bfcz_{n_2-1}, n_2-1). \end{equation}
Thus, by condition (C6), we have
\begin{equation} \label{ef917} 
w_{n_2}\notin \bigcap_{\bfz\in\otimes\bfcg(1)} D(\bfz).
\end{equation}
On the other hand, however, by condition (C5), we get that
\begin{equation} \label{e944}
w_{n_2} \in  \bigcap_{\bfz\in\Gamma_{n_2}} D(\bfz) \subseteq \bigcap_{\bfz\in\otimes\bfcg(1)} D(\bfz).
\end{equation}
This is clearly a contradiction. The proof of Lemma \ref{l91} is thus completed.

\subsection{Proof of Theorem \ref{it5}}

Recall that the constant $\theta(b_1,...,b_d,b_{d+1}|\ee)$ has been defined in \eqref{eth}.  Let $K$, $r$ and $f$
be as in \eqref{e91} and \eqref{e98} respectively. We set $K'=K\lceil 2/r^2\rceil$ and we define
\begin{equation} \label{nearly-end}
\mathrm{StrCor}(b_1,...,b_d,b_{d+1}|\ee)=f^{(K')}(2).
\end{equation}
With these choices, Theorem \ref{it5} follows by Lemma \ref{l91} via a standard iteration.



\begin{thebibliography}{99}

\bibitem{C} T. J. Carlson, \textit{Some unifying principles in Ramsey Theory}, Discr. Math., 68 (1988), 117-169.

\bibitem{BV} R. Bicker and B. Voigt, \textit{Density theorems for finitistic trees}, Combinatorica, 3 (1983), 305-313.

\bibitem{DKK} P. Dodos, V. Kanellopoulos and N. Karagiannis, \textit{A density version of the Halpern--L\"{a}uchli theorem}, Adv. Math. (to appear).

\bibitem{DKT1} P. Dodos, V. Kanellopoulos and K. Tyros, \textit{Measurable events indexed by trees}, Comb. Probab. Comput., 21 (2012), 374-411.

\bibitem{DKT2} P. Dodos, V. Kanellopoulos and K. Tyros, \textit{Dense subsets of products of finite trees}, Int. Math. Res. Not., 4 (2013), 924-970.

\bibitem{DKT3} P. Dodos, V. Kanellopoulos and K. Tyros, \textit{A density version of the Carlson--Simpson theorem}, J. Eur. Math. Soc. (to appear).

\bibitem{DKT4} P. Dodos, V. Kanellopoulos and K. Tyros, \textit{A simple proof of the density Hales--Jewett theorem}, Int. Math. Res. Not. (to appear).

\bibitem{EH} P. Erd\H{o}s and A. Hajnal, \textit{Some remarks on set theory, IX. Combinatorial problems in measure theory and set theory}, Mich. Math. Journal,
11 (1964), 107-127.

\bibitem{FT} D. H. Fremlin and M. Talagrand, \textit{Subgraphs of random graphs}, Trans. Amer. Math. Soc., 291 (1985), 551-582.

\bibitem{FK} H. Furstenberg and Y. Katznelson, \textit{A density version of the Hales--Jewett theorem}, Journal d'Anal. Math., 57 (1991), 64-119.

\bibitem{FW} H. Furstenberg and B. Weiss, \textit{Markov processes and Ramsey theory for trees}, Comb. Probab. Comput., 12 (2003), 547-563.

\bibitem{HJ} A. H. Hales and R. I. Jewett, \textit{Regularity and positional games}, Trans. Amer. Math. Soc., 106 (1963), 222-229.

\bibitem{Mi1} K. Milliken, \textit{A Ramsey theorem for trees}, J. Comb. Theory Ser. A, 26 (1979), 215-237.

\bibitem{Mi2} K. Milliken, \textit{A partition theorem for the infinite subtrees of a tree}, Trans. Amer. Math. Soc., 263 (1981), 137-148.

\bibitem{PST} J. Pach, J. Solymosi and G. Tardos, \textit{Remarks on a Ramsey theory for trees}, Combinatorica, 32 (2012), 473-482.

\bibitem{Ra} F. P. Ramsey, \textit{On a problem of formal logic}, Proc. London Math. Soc., 30 (1930), 264-286.

\bibitem{Roth} K. F. Roth, \textit{On certain sets of integers}, J. London Math. Soc., 28 (1953), 104-109.

\bibitem{So} M. Soki\'{c}, \textit{Bounds on trees}, Discrete Math., 311 (2011), 398-407.

\bibitem{To} S. Todorcevic, \textit{Introduction to Ramsey Spaces}, Annals Math. Studies, No. 174, Princeton Univ. Press, 2010.

\end{thebibliography}
\end{document}